\newtheorem{thm}{Theorem}[section]
\newtheorem{prop}[thm]{Proposition}
\newtheorem{cor}[thm]{Corollary}
\newtheorem{lem}[thm]{Lemma}
\newtheorem{defn}[thm]{Definition}
\newtheorem{remark}[thm]{Remark}
\newtheorem{example}[thm]{Example}
\makeatletter \@addtoreset{equation}{section} \makeatother
\renewcommand{\P}{\mathbb{P}}
\newcommand{\E}{\mathbb{E}}
\newcommand{\R}{\mathbb{R}}
\newcommand{\supp}{\mathop{\rm supp}\nolimits}
\renewcommand{\d}{\mathrm{d}}
\newcommand{\m}{\mathfrak{m} }
\newcommand{\loc}{{\rm loc}}
\newcommand{\1}{{\bf 1}}
\newcommand{\eps}{{\varepsilon}}
\newcommand{\wt}{\widetilde}
\newcommand{\wh}{\widehat}
\renewcommand{\wh}{\widehat}
\renewcommand{\wt}{\widetilde}
\title{\large\bf A remark on subharmonicity for symmetric Dirichlet forms}
\author{
Kazuhiro Kuwae\footnote{\Letter\, Kazuhiro Kuwae ({\tt kuwae@fukuoka-u.ac.jp})
Department of Applied Mathematics, Fukuoka University, Fukuoka 814-0180, Japan. 
Supported in part by JSPS Grant-in-Aid for Scientific Research (S) (No. 22H04942) and fund (No.~215001) from the Central Research Institute of Fukuoka University.}\ \ \ \ 
Rong Lei\footnote{Rong Lei ({\tt leirong17@mails.ucas.edu.cn}) 
Department of Applied Mathematics, Fukuoka University,
Fukuoka 814-0180, Japan. 
Supported in part by JSPS Grant-in-Aid for Scientific Research (S) (No. A22H04942).
Present address: Mathematical Institute, Tohoku University, 
Sendai 980-8577, Japan.}
\ \ and\ \
Ludovico Marini \footnote{Ludovico Marini ({\tt marini@fukuoka-u.ac.jp}) 
Department of Applied Mathematics, Fukuoka University,
Fukuoka 814-0180, Japan. Supported in part by JSPS Grant-in-Aid for Scientific Research (S) (No. 22H04942).}
}
\date{}
\begin{document}
\maketitle

\begin{abstract}
We remove the local boundedness for 
$\mathscr{E}_{\alpha}$-subharmonicity in the framework of  
(not necessarily strongly local) regular symmetric Dirichlet form $(\mathscr{E},D(\mathscr{E}))$ with $\alpha\geq0$ and establish 
the stochastic characterization for $\mathscr{E}$-subharmonic functions without assuming the local boundedness.
\end{abstract}

{\it Keywords}:  Dirichlet forms, 
$\mathscr{E}$-subharmonic function, intrinsic metric, 
strong maximum principle, 

{\it Mathematics Subject Classification (2020)}: Primary 
31C25, 31C05, 60J46 ; Secondary 30L15, 53C21.

\section{Introduction}\label{sec:Intro}
In this paper, we give a stochastic characterization of \lq\lq subharmonic functions\rq\rq\;without assuming its local boundedness in the framework of 
general symmetric regular Dirichlet forms over locally compact separable metric spaces. 
In the classical setting, such a characterization between harmonic functions and (local) martingales can be explained by It\^o's formula for Brownian motions on $\R^d$. On $\R^d$, any harmonic function in the ordinary sense is  smooth, hence locally bounded (harmonicity in distributional sense is not necessarily locally bounded, see Example~\ref{ex:BM}). In \cite{Chen}, Z.-Q.~Chen generalized this characterization 
for locally bounded $\mathscr{E}$-harmonic functions in the framework of regular 
Dirichlet form $(\mathscr{E},D(\mathscr{E}))$ associated with general 
symmetric Markov (not necessarily diffusion) processes by using the 
stochastic calculus for additive functionals.   
Later on, this stochastic characterization for locally bounded  $\mathscr{E}$-harmonic functions is extended by Ma--Zhu--Zhu~\cite{MZZ} in the framework of non-symmetric Markov processes. In \cite{CK}, Z.-Q.~Chen and the first named author 
extend \cite{Chen}  
for locally bounded $\mathscr{E}$-subharmonic functions. 
In their results, the local boundedness of $\mathscr{E}$-(sub)harmonicity is assumed a priori, since their assumption \cite[(2.4)]{Chen} or \cite[(2.2)]{CK}, (not \cite[(2.5)]{CK}) for $\mathscr{E}$-subharmonic functions requires 
the local boundedness for the equivalence among \cite[(2.4)]{Chen},  \cite[(2.2)]{CK} and \cite[(2.5)]{CK}.  
This local boundedness is unnecessary at least for diffusion processes, 
because the equivalence among \cite[(2.4)]{Chen}, \cite[(2.2)]{CK} and \cite[(2.5)]{CK} for local Dirichlet forms is automatically satisfied without assuming 
the local boundedness.   
Moreover, by \cite[Lemma~3.6]{CK}, we see that any non-negative $\mathscr{E}$-superharmonic 
function $u$ in the domain of (extended) Dirichlet space is always superharmonic 
in the sense of \cite[Definition~2.1]{CK} 
without assuming its local boundedness. 
This is the reason why we reformulate the results 
for \cite{Chen,CK} without assuming the local boundedness. 

The structure of this paper is as follows: In Section~\ref{sec:Frame}, we state the general setting in terms of Dirichlet forms $(\mathscr{E},D(\mathscr{E}))$ associated to our symmetric Markov processes. 
In Section~\ref{sec3:subharmonicity}, we 
prepare several conditions on functions and lemmas to formulate  
the notion of $\mathscr{E}$-subharmonic functions in terms of $(\mathscr{E},D(\mathscr{E}))$, which is a natural extended notion of 
subharmonicity in distributional sense in the classical case. In this formulation, we do not assume the local boundedness of $\mathscr{E}$-subharmonic functions, but we assume an integrability condition as in \cite[(2.5)]{CK}.    
In Section~\ref{sec:subharmonicity}, we define the two types of subharmonicity in stochastic sense as defined in \cite{CK} and prepare a condition \eqref{eq:2.3} 
instead of \cite[(2.5)]{Chen} or \cite[(2.3)]{CK}. We clarify that 
\cite[(2.5)]{Chen} and \cite[(2.3)]{CK} for locally bounded $u\in D(\mathscr{E})$ 
satisfies \eqref{eq:2.3} (Proposition~\ref{prop:Sufficient}(4)).  
At last, we state the main results (Theorem~\ref{thm:main} and Corollary~\ref{cor:main}). 
In Section~\ref{sec:strongmaxprin}, we state a strong maximum principle in our setting, which is a slight extension of \cite[Theorem~2.11]{CK}. 
In Section~\ref{sec:Example}, we put a few examples. 
    
\section{Framework}\label{sec:Frame}
Let $E$ be a locally compact separable metric space and ${\m}$ a positive Radon measure with full support. 
Let $(\mathscr{E},D(\mathscr{E}))$ 
be a regular Dirichlet form on $L^2(E;{\m})$ 
and ${\bf X}=(\Omega, X_t,\mathscr{F}_t, \mathbb{P}_x)$ the Hunt process 
associated with $(\mathscr{E},D(\mathscr{E}))$ in the sense that 
$P_tf(x)=\mathbb{E}_x[f(X_t)]$ for $f\in L^2(E;{\m})\cap \mathscr{B}(E)$. Here 
$\mathscr{B}(E)$ is the family of Borel measurable functions on $E$ and  
$(P_t)_{t\geq0}$ denotes the $L^2$-semigroup associated with $(\mathscr{E},D(\mathscr{E}))$ (see \cite[\S 7.2]{FOT}). 
An increasing sequence of closed sets 
$\{F_n\}$ is said to be an {\it $\mathscr{E}$-nest} or a 
{\it generalized nest} if $\bigcup_{n=1}^{\infty}D(\mathscr{E}_{F_n})$ is dense in $D(\mathscr{E})$ with respect to the norm $\|\cdot\|_{\mathscr{E}_1^{1/2}}$ defined by $\|u\|_{\mathscr{E}_1^{1/2}}=\mathscr{E}_1(u,u)^{1/2}$ for $u\in D(\mathscr{E})$, and $ D(\mathscr{E}_{F_n}):=\{u\in D(\mathscr{E})\mid u=0\;{\m}\text{-a.e.~on }E\setminus F_n\}$ (see \cite[Chapter III, Definition~2.1(i)]{MR}). 
It is known that an increasing sequence of closed subsets $\{F_n\}$ is an 
$\mathscr{E}$-nest if and only if $\lim_{n\to\infty}{\rm Cap}(K\setminus F_n)=0$ for any compact set $K$ of $E$, where ${\rm Cap}$ is the $1$-order 
capacity with respect to $(\mathscr{E},D(\mathscr{E}))$ (see \cite[Chapter IV, Lemma~4.5(i)$\Leftrightarrow$(iii)]{MR}, 
\cite[\S 2.1 and (2.2.17)]{FOT}).
A subset $N$ of $E$ is said to be {\it $\mathscr{E}$-exceptional} or 
{\it $\mathscr{E}$-polar} if there exists an $\mathscr{E}$-nest $\{F_n\}$ such that $N\subset \bigcap_{n=1}^{\infty}(E\setminus F_n)$. 
A statement $P(x)$ holds {\it $\mathscr{E}$-quasi-everywhere $x\in E$} ({\it $\mathscr{E}$-q.e.~$x\in E$} in short) if the set $\{x\in E\mid P(x)\text{ fails}\}$ is $\mathscr{E}$-exceptional.  
A function $u$ on $E$ is said to be {\it $\mathscr{E}$-quasi-continuous} if there
exists an $\mathscr{E}$-nest $\{F_n\}$ such that $u|_{F_n}$ is continuous on $F_n$ for each $n\in\mathbb{N}$ (see \cite[Chapter III, Definition~2.1(ii)]{MR}). It is known that any $u\in D(\mathscr{E})$ admits an $\mathscr{E}$-quasi-continuous ${\m}$-version $\tilde{u}$ of $u$ (see \cite[Theorem~2.1.3]{FOT} and \cite[Chapter IV, Lemma~4.5(i)$\Leftrightarrow$(iii)]{MR}, or 
\cite[Chapter IV, Proposition~3.3(ii) and \S4 a)]{MR}). 
It is well known that $(\mathscr{E},D(\mathscr{E}))$ can be decomposed into three parts $\mathscr{E}(u,v)=\mathscr{E}^{(c)}(u,v)+\mathscr{E}^{(j)}(u,v)+\mathscr{E}^{(\kappa)}(u,v)$ for 
$u,v\in D(\mathscr{E})\cap C_c(E)$, called \emph{Beurling-Deny decomposition}.
More precisely, there exists a signed finite measure $\mu_{\langle u,v\rangle}^{(c)}$ and positive Radon measures $J$ on $E^2\setminus {\sf diag}$ and $\kappa$ on $E$ such that 
$\mathscr{E}^{(c)}(u,v)=\frac12\mu_{\langle u,v\rangle}^{(c)}(E)$, $\mathscr{E}^{(j)}(u,v)=
\int_{E^2\setminus{\sf diag}}( u (x)- u (y))(v(x)-v(y))J({\d} x{\d} y)$, and 
$\mathscr{E}^{(\kappa)}(u,v)=\int_E u (x) v(x)\kappa({\d} x)$ for $u,v\in D(\mathscr{E})\cap C_c(E)$. Here $C_c(E)$ denotes the family of continuous functions with compact support. 
We set $\mu_{\langle u\rangle}^{(c)}:=\mu_{\langle u,u\rangle}^{(c)}$. 
The measures $\mu_{\langle u,v\rangle}^{(c)}$, $\kappa$ and any marginal measure of $J$ do not charge any $\mathscr{E}$-exceptional set.
Such a decomposition, in fact, extends to general $u,v\in D(\mathscr{E})$ and has a similar expression with $\mathscr{E}^{(j)}(u,v)=
\int_{E^2\setminus{\sf diag}}(\tilde{u}(x)-\tilde{u}(y))(\tilde{v}(x)-\tilde{v}(y))J({\d} x{\d} y)$, and 
$\mathscr{E}^{(\kappa)}(u,v)=\int_E\tilde{u}(x)\tilde{v}(x)\kappa({\d} x)$.
Here and throughout the rest of the paper, $\tilde{u}$ denotes the $\mathscr{E}$-quasi-continuous ${\m}$-version of $u\in D(\mathscr{E})$.   
Let $(N(x,{\d} y), H)$ be a L\'evy system of the Hunt process {\bf X}  (see \cite[Theorem A.3.21]{FOT}).
Let $\mu_H$ be the Revuz measure of the PCAF $H$. 
Then $J$ and $\kappa$ are represented by 
\begin{align*}
J({\d} x{\d} y)=\frac12N(x,{\d} y)\mu_H({\d} x),\quad \kappa({\d} x)=N(x,\partial)\mu_H({\d} x)
\end{align*}
(see \cite[(5.3.6),(5.3.13)]{FOT}).

\section{{$\mathscr{E}$}-subharmonicity}\label{sec3:subharmonicity}
We fix a regular Dirichlet form $(\mathscr{E},D(\mathscr{E}))$ on $L^2(E;{\m})$ and a non-empty 
open subset $D$ of $E$. Let $(\mathscr{E}_D,D(\mathscr{E}_D))$ be 
the part space on $L^2(D;{\m})$ defined by 
$D(\mathscr{E}_D):=\{u\in D(\mathscr{E})\mid \tilde{u}=0\; \mathscr{E}\text{-q.e.~on }E\setminus D\}$ and 
$\mathscr{E}_D(u,v):=\mathscr{E}(u,v)$ for $u,v\in D(\mathscr{E}_D)$. 
Then $(\mathscr{E}_D,D(\mathscr{E}_D))$ becomes a regular Dirichlet form on $L^2(D;\m)$ (see \cite[Theorem~4.4.3]{FOT}).
It is known that $(\mathscr{E}_D,D(\mathscr{E}_D))$ on $L^2(D;{\m})$ is associated with the part process ${\bf X}_D$ in the sense that 
$P_t^Df(x)=\mathbb{E}_x[f(X_t):t<\tau_D]$ for $f\in L^2(D;{\m})\cap \mathscr{B}(D)$. Here ${\bf X}_D:=(\Omega, X_t^D,\mathbb{P}_x)$ is defined to be 
$X_t^D:=X_t$ if $t<\tau_D$ and $X_t^D:=\partial$ if $t\geq\tau_D$ and  
$\mathscr{B}(D)$ is the family of Borel measurable functions on $D$.  
$(P_t^D)_{t\geq0}$ denotes the $L^2$-semigroup 
associated with $(\mathscr{E}_D,D(\mathscr{E}_D))$ and $\tau_D:=\inf\{t>0\mid X_t\notin D\}$ is the first exit time from $D$ with respect to ${\bf X}$. 
We set $D(\mathscr{E}_D)_c:=\{u\in D(\mathscr{E}_D)\mid {\rm supp}[u](\subset D)\text{ is compact}\}$. 
Denote by $(\mathscr{E},D(\mathscr{E})_e)$ the extended Dirichlet space of $(\mathscr{E},D(\mathscr{E}))$ defined by 
\begin{align*}
\left\{\begin{array}{rl}D(\mathscr{E})_e&=\{u\in L^0(E;{\m})\mid \exists \{u_n\}\subset D(\mathscr{E}): \mathscr{E}\text{-Cauchy sequence}\\
&\hspace{4cm}  \text{ such  that }\lim_{n\to\infty}u_n=u\;{\m}\text{-a.e.}\}, \\
\mathscr{E}(u,u)&=\lim_{n\to\infty}\mathscr{E}(u_n,u_n),\end{array}\right.
\end{align*}
(see \cite[p.~40]{FOT} for details on extended Dirichlet space). 
Here $L^0(E;{\m})$ denotes the family of ${\m}$-measurable functions on $E$. 
Any $u\in  D(\mathscr{E})_e$ admits an $\mathscr{E}$-quasi-continuous ${\m}$-version $\tilde{u}$ (see \cite[Theorem~2.1.7]{FOT}).
It is shown in \cite[Theorem~3.4.9]{CFbook} that $D(\mathscr{E}_D)_e=\{f\in D(\mathscr{E})_e\mid \tilde{f}=0\;\mathscr{E}\text{-q.e.~on }E\setminus D\}$.
We define $D(\mathscr{E})_{D,\loc}$ (resp.~$D(\mathscr{E})_{e,D,\loc}$) by
\begin{align*}
D(\mathscr{E})_{D,\loc}:&=\{u\in L^0(E;{\m})\mid \text{ for all }U\Subset D
\text{ there exists }u_U\in D(\mathscr{E})\\
&\hspace{6cm}
\text{ such that }u=u_U\;{\m}\text{-a.e. on }U\},\\
D(\mathscr{E})_{e,D,\loc}:&=\{u\in L^0(E;{\m})\mid \text{ for all }U\Subset D
\text{ there exists }u_U\in D(\mathscr{E})_e\\
&\hspace{6cm}
\text{ such that }u=u_U\;{\m}\text{-a.e. on }U\}.
\end{align*}
Here $U\Subset D$ means that $U$ is a relatively compact open set satisfying $\overline{U}\subset D$. When $D=E$, we write $D(\mathscr{E})_{\loc}$ (resp.~$D(\mathscr{E})_{e,\loc}$) instead of 
$D(\mathscr{E})_{E,\loc}$ (resp.~$D(\mathscr{E})_{e,E,\loc}$). We see that $D(\mathscr{E})_{\loc}\subset D(\mathscr{E})_{D,\loc}$ 
and $D(\mathscr{E})_{e,\loc}\subset D(\mathscr{E})_{e,D,\loc}$ always hold. 
 Any $u\in D(\mathscr{E})_{e,D,\loc}$ admits 
an ${\m}$-a.e.~version $\tilde{u}$, which is $\mathscr{E}$-quasi-continuous on $D$, that is, there exists an $\mathscr{E}$-nest $\{F_n\}$ of closed sets such that $\tilde{u}|_{F_n\cap D}$ is continuous on $F_n\cap D$. Indeed, take an increasing sequence $\{U_k\}_{k=1}^{\infty}$ 
such that $U_k\Subset D$ for all $k\in\mathbb{N}$ 
and $\bigcup_{k=1}^{\infty}U_k=D$. 
Then one can construct a common $\mathscr{E}$-nest $\{F_n\}$ such that  
$\widetilde{u_{U_k}}|_{F_n}$ is continuous on each $F_n$ (see \cite[Chapter III, Proposition~3.3]{MR}). 
We define a function $\tilde{u}$ $\mathscr{E}$-q.e.~on $D$   
by $\tilde{u}:=\widetilde{u_{U_k}}$ on $U_k\cap F_n$ for each $k,n$. 
Then $\tilde{u}|_{F_n\cap D}$ is  continuous on $F_n\cap D$ for each $n$. 
It is easy to see 
$D(\mathscr{E}_D)_{\loc}\subset D(\mathscr{E})_{D,\loc}$ and 
$D(\mathscr{E})_{D,\loc}\cap L_{\loc}^{\infty}(D;\m)\subset 
D(\mathscr{E}_D)_{\loc}$ (see \cite[pp.~1185]{CK}).

We further consider narrow subclasses of 
$D(\mathscr{E})_{e,D,\loc}$
as follows:
\begin{align*}
D(\mathscr{E})_{e,D,\loc}^{\dag}:&=
\left\{u\in D(\mathscr{E})_{e,D,\loc}\,\left|\, 
\int_{U\times D\setminus{\sf diag}}(\tilde{u}(x)-\tilde{u}(y))^2J({\d} x{\d} y)<\infty
\text{ for any }U\Subset D\right.\right\},\\
D(\mathscr{E})_{e,D,\loc}^{\diamond}:&
=\left\{u\in D(\mathscr{E})_{e,D,\loc}\,\left|\,
\int_{U\times (D\setminus V)}|\tilde{u}(x)-\tilde{u}(y)|J({\d} x{\d} y)<\infty\text{ for any }
U\Subset V\Subset D\right.\right\}.
\end{align*}
Then, we can easily see 
$D(\mathscr{E})_{e,D_2,\loc}^{\dag}\subset D(\mathscr{E})_{e,D_1,\loc}^{\dag}$ and $D(\mathscr{E})_{e,D_2,\loc}^{\diamond}\subset D(\mathscr{E})_{e,D_1,\loc}^{\diamond}$ for any non-empty open subsets $D_1\subset D_2$.
When $D=E$, we write $D(\mathscr{E})_{e,\loc}^{\dag}$ 
(resp.~$D(\mathscr{E})_{e,\loc}^{\diamond}$) instead of 
$D(\mathscr{E})_{e,E,\loc}^{\dag}$ (resp.~$D(\mathscr{E})_{e,E,\loc}^{\diamond}$). 
We further set $D(\mathscr{E})_{D,\loc}^{\dag}:=D(\mathscr{E})_{e,D,\loc}^{\dag}\cap D(\mathscr{E})_{D,\loc}$ and 
$D(\mathscr{E})_{D,\loc}^{\diamond}:=D(\mathscr{E})_{e,D,\loc}^{\diamond}\cap D(\mathscr{E})_{D,\loc}$. When $D=E$, we omit the symbol $D$ as mentioned above.  
It is easy to see $\1_D\in D(\mathscr{E})_{D,\loc}^{\dag}\subset 
D(\mathscr{E})_{D,\loc}^{\diamond}$.
It is known that $D(\mathscr{E})\subset D(\mathscr{E})_e\subset D(\mathscr{E})_{e,\loc}^{\dag}$ and 
$D(\mathscr{E})_{\loc}\cap L^{\infty}(E;{\m})\subset D(\mathscr{E})_{\loc}^{\dag}$ (see \cite[Remark~2.4(ii),(iii)]{CK}). 
Similarly, we can get $D(\mathscr{E})\subset D(\mathscr{E})_{D,\loc}^{\dag}$ and 
$D(\mathscr{E})_{D,\loc}\cap L^{\infty}(D;{\m})\subset D(\mathscr{E})_{D,\loc}^{\dag}$. Indeed, $D(\mathscr{E})\subset D(\mathscr{E})_{D,\loc}^{\dag}$ is trivial and  
\begin{align*}
D(\mathscr{E})_{D,\loc}\cap L^{\infty}(D;{\m})&\subset  
D(\mathscr{E}_D)_{\loc}\cap L^{\infty}(D;{\m})\\
&\subset D(\mathscr{E}_D)_{\loc}^{\dag}\;\,\text{(applying $(\mathscr{E}_D,D(\mathscr{E}_D))$ instead of $(\mathscr{E},D(\mathscr{E}))$)}\\
&\subset D(\mathscr{E})_{D,\loc}^{\dag}.
\end{align*}

The conditions defining the 
subclass $D(\mathscr{E})_{\loc}^{\diamond}$ of $D(\mathscr{E})_{\loc}$ 
first appeared in \cite[(2.5)]{CK}. 
For $u\in D(\mathscr{E})_{e,D,\loc}\cap L^{\infty}_{\loc}(D;{\m})$, it is easy to see that 
\begin{align}
\int_{U\times (D\setminus V)}|\tilde{u}(x)-\tilde{u}(y)|J({\d} x{\d} y)<\infty\label{eq:finite}
\end{align}
if and only if 
\begin{align}
\int_{U\times (D\setminus V)}
|\tilde{u}(y)|J({\d} x{\d} y)<\infty\label{eq:finite*}
\end{align}
 for any 
$U\Subset V\Subset D$, because $J(U\times V^c)<\infty$ for such $U,V$ (see \cite[Remark~2.4(iii)]{CK}). It is well-known that $J(U\times V^c)<\infty$ for $U\Subset V\Subset E$ (see \cite[Corollary~5.1]{Kw:func},\cite[(2.3)]{Chen}). 
By definition, 
$D(\mathscr{E})_{D,\loc}^{\dag}= D(\mathscr{E})_{D,\loc}^{\diamond}= D(\mathscr{E})_{D,\loc}$ under $J\equiv0$. 

A  function $T:\R^N\to\R$, is said to be a \emph{{\rm(}generalized{\,\rm)} normal contraction} if 
\begin{align*}
T(0)=0\quad\text{ and }\quad |T(x)-T(y)|\leq \sum_{k=1}^N|x_k-y_k|
\end{align*}
for any $x=(x_1,\cdots,x_N)$, $y=(y_1,\cdots, y_N)\in\R^N$. 
When $N=1$, we call such $T:\R\to\R$ a \emph{normal contraction}. 
Note that $\phi(t):=t^{\sharp}:=0\lor t\land 1$, $\phi(t):=|t|$, $\phi(t):=t^+:=t\lor 0$ and $\phi(t):=t^-:=(-t)\lor0$ are typical examples of normal contractions. More generally, any real-valued function 
$\phi_{\eps}$ on $\R$ depending on $\eps>0$ such that $\phi_{\eps}(t)=t$ for $t\in[0,1]$, $-\eps\leq\phi_{\eps}(t)\leq 1+\eps$ for all $t\in\R$, and $0\leq\phi_{\eps}(t)-\phi_{\eps}(s)\leq t-s$ whenever $s<t$, is also a normal contraction. The following lemmas are easy to prove:

\begin{lem}\label{lem:contraction}
Let $T:\R^N\to\R$ be a generalized normal contraction. 
For $u_i\in D(\mathscr{E})_{D,\loc}^{\dag}$ {\rm(}resp.~$u_i\in D(\mathscr{E})_{D,\loc}^{\diamond}$, $u_i\in D(\mathscr{E})_{D,\loc}${\rm)} {\rm(}$i=1,2,\cdots, N${\rm)}, then $T(u_1,\cdots,u_N)\in 
 D(\mathscr{E})_{D,\loc}^{\dag}$ {\rm(}resp.~$T(u_1,\cdots,u_N)\in D(\mathscr{E})_{D,\loc}^{\diamond}$, $T(u_1,\cdots,u_N)\in D(\mathscr{E})_{D,\loc})$. In particular, for $u_1,u_2\in 
 D(\mathscr{E})_{D,\loc}^{\dag}$ {\rm(}resp.~$u_1,u_2\in D(\mathscr{E})_{D,\loc}^{\diamond}$, $u_1,u_2\in D(\mathscr{E})_{D,\loc}${\rm )}, then $u_1\lor u_2, u_1\land u_2\in D(\mathscr{E})_{D,\loc}^{\dag}$ {\rm(}resp.~$u_1\lor u_2, u_1\land u_2\in D(\mathscr{E})_{D,\loc}^{\diamond}$, $u_1\lor u_2, u_1\land u_2\in D(\mathscr{E})_{D,\loc}${\rm)}. 
Similar assertions also hold in terms of $D(\mathscr{E})_{e,\loc}$, $D(\mathscr{E})_{e,\loc}^{\diamond}$, $D(\mathscr{E})_{e,\loc}^{\dag}$. 
\end{lem}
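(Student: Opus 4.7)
The plan is to reduce the lemma to the standard Markovian closure property of $D(\mathscr{E})$ and of its extended space $D(\mathscr{E})_e$ under generalized normal contractions (see e.g.\ \cite[Theorem~1.4.1]{FOT}), and then to upgrade this to the localized versions and to the $\dag/\diamond$ variants via a pointwise bound applied under the jump measure $J$.

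First, to handle plain membership in $D(\mathscr{E})_{D,\loc}$, I would fix $U\Subset D$ and, for each $i$, pick $u_i^U\in D(\mathscr{E})$ with $u_i=u_i^U$ $\m$-a.e.~on $U$. Applying the Markovian closure property to the generalized normal contraction $T$ yields $T(u_1^U,\ldots,u_N^U)\in D(\mathscr{E})$, and this function agrees with $T(u_1,\ldots,u_N)$ $\m$-a.e.~on $U$; this gives $T(u_1,\ldots,u_N)\in D(\mathscr{E})_{D,\loc}$. The identical argument with $D(\mathscr{E})_e$ in place of $D(\mathscr{E})$ gives the membership in $D(\mathscr{E})_{e,D,\loc}$.

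Next, to pass to the $\dag$ and $\diamond$ subclasses, set $v:=T(u_1,\ldots,u_N)$ and exploit the pointwise inequality
\[
|\tilde v(x)-\tilde v(y)|\leq\sum_{k=1}^N|\tilde u_k(x)-\tilde u_k(y)|,
\]
which follows from the generalized normal contraction property applied to the quasi-continuous versions. Squaring and using $\bigl(\sum_k a_k\bigr)^2\leq N\sum_k a_k^2$ yields, for $U\Subset D$,
\[
\int_{U\times D\setminus{\sf diag}}(\tilde v(x)-\tilde v(y))^2 J(\d x\d y)\leq N\sum_{k=1}^N\int_{U\times D\setminus{\sf diag}}(\tilde u_k(x)-\tilde u_k(y))^2 J(\d x\d y)<\infty
\]
whenever each $u_k\in D(\mathscr{E})_{e,D,\loc}^{\dag}$, and the analogous unsquared estimate on $U\times(D\setminus V)$ for $U\Subset V\Subset D$ settles the $\diamond$ case. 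Intersecting with the conclusion of the previous paragraph gives $v\in D(\mathscr{E})_{D,\loc}^{\dag}$ (respectively $v\in D(\mathscr{E})_{D,\loc}^{\diamond}$), and the $e$-localized statements follow in the same manner.

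For the final sentence about $u_1\lor u_2$ and $u_1\land u_2$, I would simply observe that $(x,y)\mapsto x\lor y$ and $(x,y)\mapsto x\land y$ are generalized normal contractions vanishing at the origin, since $|a\lor b-c\lor d|\leq|a-c|\lor|b-d|\leq|a-c|+|b-d|$ and similarly for $\land$, so the assertion reduces to the case already treated. The only subtle point I expect is verifying that the local representatives $u_i^U$ chosen in the first step are compatible with the quasi-continuous versions appearing in the pointwise bound; but because $\widetilde{u_i^U}$ and $\tilde u_i$ agree $\mathscr{E}$-q.e.~on $U$ and $J$ does not charge $\mathscr{E}$-exceptional sets (as it is expressed through the L\'evy system), the two formulations produce the same integrals, so no additional care is required.
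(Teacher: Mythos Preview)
Your proof is correct and is exactly the natural argument the paper has in mind; the paper itself omits the proof entirely, simply remarking beforehand that the lemma ``is easy to prove.'' The only minor quibble is that \cite[Theorem~1.4.1]{FOT} strictly covers the univariate case, so the closure of $D(\mathscr{E})$ and $D(\mathscr{E})_e$ under \emph{generalized} (multivariate) normal contractions needs either a brief additional remark or a different pointer, but this is standard and does not affect the validity of your argument.
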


\begin{lem}\label{lem:weldefinedness}
We always have $D(\mathscr{E})_{e,D,\loc}^{\dag}\subset D(\mathscr{E})_{e,D,\loc}^{\diamond}\subset D(\mathscr{E})_{e,D,\loc}$. 
For $u\in D(\mathscr{E})_{e,D,\loc}^{\diamond}$ 
{\rm(}resp.~$u\in D(\mathscr{E})_{e,D,\loc}^{\dag}${\rm)} and 
$v\in D(\mathscr{E}_D)_c\cap L^{\infty}(D;{\m})$ {\rm(}resp.~$v\in D(\mathscr{E}_D)_c${\rm)}, 
the integration 
\begin{align}
\int_{D^2\setminus {\sf diag}}
(\tilde{u}(x)-\tilde{u}(y))(\tilde{v}(x)-\tilde{v}(y))J({\d} x{\d} y)\label{eq:welldefined3}
\end{align}
is well-defined.
Moreover, 
for $u\in 
D(\mathscr{E})_{e,\loc}^{\diamond}
$ {\rm(}resp.~$u\in 
D(\mathscr{E})_{e,\loc}^{\dag}
${\rm)}
and 
$v\in D(\mathscr{E}_D)_c\cap L^{\infty}(D;{\m})$ {\rm(}resp.~$v\in D(\mathscr{E}_D)_c${\rm)}, the integration 
\begin{align}
\int_{E^2\setminus {\sf diag}}
(\tilde{u}(x)-\tilde{u}(y))(\tilde{v}(x)-\tilde{v}(y))J({\d} x{\d} y)\label{eq:welldefined4}
\end{align}
is well-defined.
\end{lem}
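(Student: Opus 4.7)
My plan is to handle the three claims separately, in increasing order of difficulty. The second inclusion $D(\mathscr{E})_{e,D,\loc}^{\diamond}\subset D(\mathscr{E})_{e,D,\loc}$ is immediate from the definitions. For $D(\mathscr{E})_{e,D,\loc}^{\dag}\subset D(\mathscr{E})_{e,D,\loc}^{\diamond}$, given $U\Subset V\Subset D$ I would apply the Cauchy--Schwarz inequality for the measure $J$ restricted to $U\times(D\setminus V)$,
\[
\int_{U\times(D\setminus V)}|\tilde u(x)-\tilde u(y)|\,J(\d x\d y)\leq \Bigl(\int_{U\times D\setminus{\sf diag}}(\tilde u(x)-\tilde u(y))^2 J(\d x\d y)\Bigr)^{1/2}J(U\times V^c)^{1/2},
\]
and combine the $\dag$-integrability of $(\tilde u(x)-\tilde u(y))^2$ with the standard fact $J(U\times V^c)<\infty$ for $U\Subset V$ used just after \eqref{eq:finite*}.

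To establish that \eqref{eq:welldefined3} is well-defined under the $\diamond$ hypothesis on $u$ and the $L^{\infty}$ assumption on $v$, I would set $K:=\supp[v]$ and fix relatively compact open sets $U_1,U_2$ with $K\subset U_1\Subset U_2\Subset D$. Since $\tilde v$ vanishes $\mathscr{E}$-q.e.\ on $D\setminus U_1$ (and marginals of $J$ do not charge exceptional sets), the integrand $(\tilde u(x)-\tilde u(y))(\tilde v(x)-\tilde v(y))$ is $J$-a.e.\ zero outside the three blocks $(U_2\times U_2)\setminus{\sf diag}$, $U_1\times(D\setminus U_2)$, and $(D\setminus U_2)\times U_1$. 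On the first block I would use $|ab|\leq\tfrac12(a^2+b^2)$ and pick a local representative $u_{U_2}\in D(\mathscr{E})_e$ of $u$ on $U_2$; since $\tilde u$ and $\tilde u_{U_2}$ then coincide $J$-a.e.\ on $U_2\times U_2$, the contribution is bounded by $2\mathscr{E}(u_{U_2},u_{U_2})+2\mathscr{E}(v,v)<\infty$. On the two off-diagonal blocks $\tilde v(y)=0$ (resp.\ $\tilde v(x)=0$), so the integrand is dominated by $\|\tilde v\|_{\infty}|\tilde u(x)-\tilde u(y)|$, and its integrability follows from the $\diamond$ hypothesis applied to the pair $U_1\Subset U_2$.

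When the hypothesis is upgraded to $u\in D(\mathscr{E})_{e,D,\loc}^{\dag}$ and $v\in D(\mathscr{E}_D)_c$ is no longer required to be bounded, the same decomposition works, except that on the off-diagonal blocks I would replace the supremum bound by a second Cauchy--Schwarz. Using $\tilde v(y)=0$ on $D\setminus U_2$ to rewrite $\tilde v(x)^2=(\tilde v(x)-\tilde v(y))^2$,
\[
\int_{U_1\times(D\setminus U_2)}\tilde v(x)^2 J(\d x\d y)\leq \int_{E^2\setminus{\sf diag}}(\tilde v(x)-\tilde v(y))^2 J(\d x\d y)\leq 2\mathscr{E}(v,v)<\infty,
\]
while the matching factor $\int_{U_1\times(D\setminus U_2)}(\tilde u(x)-\tilde u(y))^2 J(\d x\d y)$ is finite by the $\dag$ condition. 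For the global statement \eqref{eq:welldefined4}, I would split $E^2\setminus{\sf diag}$ along $D$: the block $(E\setminus D)^2\setminus{\sf diag}$ contributes zero since $\tilde v$ vanishes there, the block $D^2\setminus{\sf diag}$ is handled by the previous argument, and the mixed blocks are contained in $V\times(E\setminus V')$ for a pair $\supp[v]\subset V\Subset V'\Subset D$, so they fall under the global $\diamond$ or $\dag$ hypothesis for $u$. The main bookkeeping obstacle will be keeping track of which blocks of the decomposition actually contribute and ensuring that equalities such as $\tilde u=\tilde u_{U_2}$ hold $J$-almost everywhere, rather than merely $\m$-almost everywhere, which is precisely where the non-charging property of the marginals of $J$ on $\mathscr{E}$-exceptional sets is essential.
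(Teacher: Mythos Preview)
Your proposal is correct and follows essentially the same approach as the paper: Cauchy--Schwarz together with $J(U\times V^c)<\infty$ for the inclusion, and a decomposition of the integration domain into a ``near'' block (handled via a local representative of $u$ in $D(\mathscr{E})_e$) and ``far'' off-diagonal blocks (handled via the $\diamond$ or $\dag$ hypothesis, using $\|\tilde v\|_\infty$ or a second Cauchy--Schwarz respectively). The only cosmetic differences are that the paper bounds the near block by Cauchy--Schwarz, yielding $\mathscr{E}(u_V,u_V)^{1/2}\mathscr{E}(v,v)^{1/2}$, rather than your $|ab|\le\tfrac12(a^2+b^2)$, and for \eqref{eq:welldefined4} the paper redoes the estimate directly over $E$ instead of first splitting along $D$ and invoking the $D$-case.
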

\begin{remark}
{\rm The formulation in \cite{Chen,CK} for the 
well-definedness of the integration \eqref{eq:welldefined4} is 
not completely precise clarification, because for any $u\in D(\mathscr{E}_D)_{\loc}$ or $u\in D(\mathscr{E})_{e,D,\loc}$, the value $\tilde{u}$ is not defined on $E\setminus D$, in particular, we cannot discuss the integration 
$\int_{U\times (E\setminus V)}\tilde{u}(y)J({\d} x{\d} y)$ for such $u$.   
To ensure $\tilde{u}$ is well-defined on $E\setminus D$, we always 
assume $u\in D(\mathscr{E})_{e,\loc}^{\diamond}$ for the 
well-definedness of the integration \eqref{eq:welldefined4} above.
}
\end{remark}
\begin{proof}[{\bf Proof of Lemma~\ref{lem:weldefinedness}.}]
For $u\in D(\mathscr{E})_{e,D,\loc}^{\dag}$ and $U\Subset V\Subset D$,
\begin{align*}
\int_{U\times (D\setminus V)}|\tilde{u}(x)-\tilde{u}(y)|J({\d} x{\d} y)&
\leq \left(\int_{U\times (D\setminus V)}|\tilde{u}(x)-\tilde{u}(y)|^2J({\d} x{\d} y) \right)^{\frac12}J(U\times V^c)^{\frac12}\\
&\leq \left(\int_{U\times D\setminus{\sf diag}}|\tilde{u}(x)-\tilde{u}(y)|^2J({\d} x{\d} y) \right)^{\frac12}J(U\times V^c)^{\frac12}<\infty
\end{align*}
implies $u\in D(\mathscr{E})_{e,D,\loc}^{\diamond}$.  
For $u\in D(\mathscr{E})_{e,D,\loc}^{\diamond}$ and $v\in D(\mathscr{E}_D)_c\cap L^{\infty}(D;{\m})$, taking open sets
$U,V$ with  ${\rm supp}[v]\subset U\Subset V\Subset D$ and some 
$u_V\in D(\mathscr{E})_e$ with $u=u_V$ ${\m}$-a.e.~on $V$, we then see 
\begin{align*}
\int_{D^2\setminus{\sf diag}}|\tilde{u}(x)-\tilde{u}(y)|&|\tilde{v}(x)-\tilde{v}(y)|J({\d} x{\d} y)\\
&
=\int_{V^2\setminus{\sf diag}}|\tilde{u}(x)-\tilde{u}(y)||\tilde{v}(x)-\tilde{v}(y)|J({\d} x{\d} y)\\
&\hspace{1cm}+2\int_{V\times (D\setminus V)}|\tilde{u}(x)-\tilde{u}(y)||\tilde{v}(x)-\tilde{v}(y)|J({\d} x{\d} y)\\
& = \int_{V^2\setminus {\sf diag}}|\tilde{u}_V(x)-\tilde{u}_V(y)||\tilde{v}(x)-\tilde{v}(y)|J({\d} x{\d} y)\\
&\hspace{1cm}+2\int_{V\times (D\setminus V)}|\tilde{u}(x)-\tilde{u}(y)||\tilde{v}(x)
|J({\d} x{\d} y)\\
&=\int_{V^2\setminus {\sf diag}}|\tilde{u}_V(x)-\tilde{u}_V(y)||\tilde{v}(x)-\tilde{v}(y)|J({\d} x{\d} y)\\
&\hspace{1cm}+2\int_{U\times (D\setminus V)}|\tilde{u}(x)-\tilde{u}(y)||\tilde{v}(x)
|J({\d} x{\d} y)\\
&\leq \mathscr{E}(u_V,u_V)^{\frac12}\mathscr{E}(v,v)^{\frac12}\\
&\hspace{1cm}+2\|\tilde{v}\|_{L^{\infty}(D;{\m})}
\int_{U\times (D\setminus V)}|\tilde{u}(x)-\tilde{u}(y)|J({\d} x{\d} y)<\infty.
\end{align*}
For $u\in D(\mathscr{E})_{e,D,\loc}^{\dag}$ and $v\in D(\mathscr{E}_D)_c$, the left hand side is estimated above by
\begin{align*}
\mathscr{E}(u_V,u_V)^{\frac12}\mathscr{E}(v,v)^{\frac12}+2\mathscr{E}(v,v)^{\frac12}\left(\int_{U\times D\setminus{\sf diag}}|\tilde{u}(x)-\tilde{u}(y)|^2J({\d} x{\d} y) \right)^{\frac12}<\infty.
\end{align*}
Next we assume $u\in  
D(\mathscr{E})_{e,\loc}^{\diamond}$
and 
$v\in D(\mathscr{E}_D)_c\cap L^{\infty}(D;{\m})$. Take open sets $U,V$ and $u_V\in D(\mathscr{E})_e$ as above.  
Then 
\begin{align*}
\int_{E^2\setminus{\sf diag}}&|\tilde{u}(x)-\tilde{u}(y)||\tilde{v}(x)-\tilde{v}(y)|J({\d} x{\d} y)\\
&
=\int_{V^2\setminus{\sf diag}}|\tilde{u}(x)-\tilde{u}(y)||\tilde{v}(x)-\tilde{v}(y)|J({\d} x{\d} y)\\
&\hspace{1cm}+2\int_{V\times (E\setminus V)}|\tilde{u}(x)-\tilde{u}(y)||\tilde{v}(x)-\tilde{v}(y)|J({\d} x{\d} y)\\
&\leq\int_{V^2\setminus {\sf diag}}|\tilde{u}_V(x)-\tilde{u}_V(y)||\tilde{v}(x)-\tilde{v}(y)|J({\d} x{\d} y)\\
&\hspace{1cm}+2\int_{V\times (E\setminus V)}|\tilde{u}(x)-\tilde{u}(y)||\tilde{v}(x)
|J({\d} x{\d} y)
\end{align*}
\begin{align*}
\qquad\qquad\qquad&\qquad\\
&=\int_{V^2\setminus {\sf diag}}|\tilde{u}_V(x)-\tilde{u}_V(y)||\tilde{v}(x)-\tilde{v}(y)|J({\d} x{\d} y)\\
&\hspace{1cm}+2\int_{U\times (E\setminus V)}|\tilde{u}(x)-\tilde{u}(y)||\tilde{v}(x)
|J({\d} x{\d} y)\\
&\leq \mathscr{E}(u_V,u_V)^{\frac12}\mathscr{E}(v,v)^{\frac12}\\
&\qquad+2\|\tilde{v}\|_{L^{\infty}(D;{\m})}
\int_{U\times (E\setminus V)}|\tilde{u}(x)-\tilde{u}(y)|J({\d} x{\d} y)
<\infty.
\end{align*}
For $u\in D(\mathscr{E})_{e,\loc}^{\dag}$ and $v\in D(\mathscr{E}_D)_c$, the left hand side is estimated above by
\begin{align*}
\mathscr{E}(u_V,u_V)^{\frac12}\mathscr{E}(v,v)^{\frac12}+2\mathscr{E}(v,v)^{\frac12}\left(\int_{U\times E\setminus{\sf diag}}|\tilde{u}(x)-\tilde{u}(y)|^2J({\d} x{\d} y) \right)^{\frac12}<\infty.
\end{align*}
\end{proof}
Recall that $\mu_{\langle u,v\rangle}^{(c)}$ for $u,v\in D(\mathscr{E})$ is the finite signed measure of the bilinear form 
$\mathscr{E}^{(c)}$ as stated in Section~\ref{sec:Frame}. 
Note that $\mu_{\langle u,v\rangle}^{(c)}$ satisfies 
the strong local property in the sense that 
$\mu_{\langle u,v\rangle}^{(c)}=0$ if $u\equiv{\rm const}$ on a 
neighborhood of ${\rm supp}[v]$ (see \cite[Lemma~5.2 ($\Gamma 6$)]{Kw:func}). Moreover, $\mu_{\langle u,v\rangle}^{(c)}$ 
satisfies the Cauchy--Schwarz inequality 
$|\mu_{\langle u,v\rangle}^{(c)}(B)|\leq \mu_{\langle u\rangle}^{(c)}(B)^{1/2}\mu_{\langle v\rangle}^{(c)}(B)^{1/2}$ for any Borel subset $B$ of $E$, because $t^2\mu_{\langle u\rangle}^{(c)}(B)+2t\mu_{\langle u,v\rangle}^{(c)}(B)+\mu_{\langle v\rangle}^{(c)}(B)
=\mu_{\langle tu+v\rangle}^{(c)}(B)\geq0$ for any $t\in\R$. 
\begin{cor}\label{cor:welldefinedenergy}
The quantity $\mathscr{E}_{\alpha}(u,v)$ for $u\in 
D(\mathscr{E})_{\loc}^{\diamond}$ {\rm(}resp.~$u\in 
D(\mathscr{E})_{\loc}^{\dag}${\rm )}
and 
$v\in D(\mathscr{E}_D)_c\cap L^{\infty}(D;{\m})$ 
{\rm(}resp.~$v\in D(\mathscr{E}_D)_c${\rm)}
defined by 
\begin{align*}
\mathscr{E}(u,v):&=\frac12\mu^{(c)}_{\langle u,v\rangle}(D)+\int_{E^2\setminus {\sf diag}}
\hspace{-0.2cm}
(\tilde{u}(x)-\tilde{u}(y))(\tilde{v}(x)-\tilde{v}(y))J({\d} x{\d} y)+\int_D\tilde{u}(x)\tilde{v}(x)\kappa({\d} x),\\
\mathscr{E}_{\alpha}(u,v):&=\mathscr{E}(u,v)+\alpha\int_Duv\,{\d}{\m}
\end{align*}
is well-defined for $\alpha>0$.  
Moreover, the quantity $\mathscr{E}(u,v)$ for $u\in
D(\mathscr{E})_{e,\loc}^{\diamond}$ 
{\rm(}resp.~$u\in
D(\mathscr{E})_{e,\loc}^{\dag}${\rm)}
and 
$v\in D(\mathscr{E}_D)_c\cap L^{\infty}(D;{\m})$ 
{\rm(}resp.~$v\in D(\mathscr{E}_D)_c${\rm)} is also well-defined.
So $\mathscr{E}_{\alpha}$ is a bilinear form on 
$D(\mathscr{E})_{\loc}^{\diamond}\times (D(\mathscr{E}_D)_c\cap L^{\infty}(D;{\m}))$ {\rm(}resp.~$D(\mathscr{E})_{e,\loc}^{\diamond}\times (D(\mathscr{E}_D)_c\cap L^{\infty}(D;{\m}))${\rm)} under $\alpha>0$ {\rm(}resp.~$\alpha=0${\rm)}, and 
 also a bilinear form on $D(\mathscr{E})_{\loc}^{\dag}\times D(\mathscr{E}_D)_c$ {\rm(}resp.~$D(\mathscr{E})_{e,\loc}^{\dag}\times D(\mathscr{E}_D)_c${\rm)} under $\alpha>0$ {\rm(}resp.~$\alpha=0${\rm)}.
When $\alpha=0$, we write $\mathscr{E}(u,v)$ instead of $\mathscr{E}_0(u,v)$.
\end{cor}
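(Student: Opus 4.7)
The plan is to verify, one summand at a time, that each of
\[
\tfrac{1}{2}\mu^{(c)}_{\langle u,v\rangle}(D),\quad \int_{E^2\setminus{\sf diag}}\hspace{-0.3cm}(\tilde u(x)-\tilde u(y))(\tilde v(x)-\tilde v(y))J(\d x\d y),\quad \int_D \tilde u\tilde v\,\d\kappa,\quad \alpha\int_D uv\,\d\m
\]
is well-defined and absolutely convergent under the stated hypotheses, and then to read off bilinearity in each variable from that of each ingredient. The final summand appears only when $\alpha>0$, which is precisely why the mass-term case requires $u\in D(\mathscr{E})_{\loc}^{\diamond/\dag}$ (so that $u$ is locally in $L^2$), whereas for the pure $\mathscr{E}$ case one can allow $u$ in the larger class $D(\mathscr{E})_{e,\loc}^{\diamond/\dag}$.

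For the jump term, nothing beyond Lemma~\ref{lem:weldefinedness} is needed: both the $\diamond$/bounded-$v$ pairing and the $\dag$/unbounded-$v$ pairing fall exactly under its second half, which treats $u\in D(\mathscr{E})_{e,\loc}^{\diamond/\dag}$ and a fortiori $u\in D(\mathscr{E})_{\loc}^{\diamond/\dag}$.

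For the strongly local, killing and mass terms I would localize against $v$. Since $\supp[v]$ is compact in $D$, I pick open sets $U\Subset V\Subset D$ with $\supp[v]\subset U$ and a representative $u_V\in D(\mathscr{E})_e$ (respectively $D(\mathscr{E})$ when $u\in D(\mathscr{E})_{\loc}^{\diamond/\dag}$) such that $u=u_V$ $\m$-a.e.\ on $V$. One then sets
\[
\mu^{(c)}_{\langle u,v\rangle}(D):=\mu^{(c)}_{\langle u_V,v\rangle}(D),\qquad \int_D\tilde u\tilde v\,\d\kappa:=\int_D\tilde u_V\tilde v\,\d\kappa,\qquad \int_D uv\,\d\m:=\int_D u_V v\,\d\m.
\]
Well-definedness of the first equality rests on the strong locality of the energy measure $\mu^{(c)}$ — namely, that $\mu^{(c)}_{\langle w,v\rangle}$ is concentrated on $\supp[v]$ and depends on $w$ only through its behaviour on any open neighbourhood of $\supp[v]$; well-definedness of the other two is immediate since $\tilde v$ vanishes $\mathscr{E}$-q.e.\ off $\supp[v]$ and neither $\kappa$ nor $\m$ charges $\mathscr{E}$-exceptional sets. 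Finiteness then follows from Cauchy--Schwarz: the local piece via the Kunita--Watanabe bound $|\mu^{(c)}_{\langle u_V,v\rangle}(D)|\leq 2\mathscr{E}^{(c)}(u_V,u_V)^{1/2}\mathscr{E}^{(c)}(v,v)^{1/2}$, the killing piece via $\mathscr{E}^{(\kappa)}(u_V,u_V)^{1/2}\mathscr{E}^{(\kappa)}(v,v)^{1/2}$, and the mass piece via $\|u_V\|_{L^2}\|v\|_{L^2}$.

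Bilinearity in each argument is then inherited from the bilinearity of $\mu^{(c)}_{\langle\cdot,\cdot\rangle}$, of the jump and killing integrals, and of the $L^2$-inner product, together with the fact that the localization $u_V$ can be chosen linearly in $u$ on a fixed open $V$ covering $\supp[v]$. The only conceptually delicate point — and hence the main obstacle — is the independence of $\mu^{(c)}_{\langle u,v\rangle}(D)$ of the choice of local representative $u_V$ and of the auxiliary open set $V$; once this is secured through strong locality of $\mu^{(c)}$, the rest is a repetition of the same Cauchy--Schwarz-type estimates already displayed in the proof of Lemma~\ref{lem:weldefinedness}.
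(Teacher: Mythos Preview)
Your proposal is correct and follows essentially the same approach as the paper: invoke Lemma~\ref{lem:weldefinedness} for the jump integral, then localize via $\supp[v]\subset U\Subset V\Subset D$ and a representative $u_V$ (the paper uses $u_U$) to handle the strongly local, killing, and mass terms by Cauchy--Schwarz. Your additional remarks on independence of the choice of local representative (via strong locality of $\mu^{(c)}$) and on bilinearity are correct and slightly more explicit than the paper, which leaves these points implicit.
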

\begin{proof}[{\bf Proof.}]
For $u\in D(\mathscr{E})_{e,\loc}^{\diamond}$ and $v\in D(\mathscr{E}_D)_c\cap L^{\infty}(D;{\m})$, taking open sets
$U,V$ with  ${\rm supp}[v]\subset U\Subset V\Subset D$, we then see 
\begin{align*}
|\mu^{(c)}_{\langle u,v\rangle}(D)|&=
|\mu^{(c)}_{\langle u,v\rangle}(U)|=|\mu^{(c)}_{\langle u_U,v\rangle}(U)|\leq 
\mu^{(c)}_{\langle u_U\rangle}(E)^{\frac12}\mu^{(c)}_{\langle v\rangle}(E)^{\frac12}<\infty,\\
 \int_D|\tilde{u}||\tilde{v}|{\d}\kappa&\leq \|\tilde{u}_U\|_{L^2(E;\kappa)}\|\tilde{v}\|_{L^2(E;\kappa)}<\infty,
\end{align*}
where $u_U$ is a function in $D(\mathscr{E})$ (resp.~$D(\mathscr{E})_e$) such that $u=u_U$ ${\m}$-a.e.~on $U$ under $\alpha>0$ (resp.~$\alpha=0$). 
The first equality above follows from the strong local property of $\mu_{\langle u,v\rangle}^{(c)}$. Indeed, let $\{G_n\}$ be an increasing sequence of relatively compact open subsets satisfying $G_n\Subset D$ and $\bigcup_{n=1}^{\infty}G_n=D$. Since $v=0$ on $D\setminus U$, we have  
$\mu^{(c)}_{\langle v\rangle}(G_n\setminus U)=0$ by \cite[Corollary~3.2.1]{FOT}, hence $\mu^{(c)}_{\langle v\rangle}(D\setminus U)=0$. This together with the Cauchy--Schwarz inequality implies $\mu^{(c)}_{\langle u,v\rangle}(D\setminus U)=0$. 
 Combining this with Lemma~\ref{lem:weldefinedness} under 
 $u\in D(\mathscr{E})^{\diamond}_{\loc}$ (resp.~$u\in D(\mathscr{E})_{e,\loc}^{\diamond}$) with $\alpha>0$ (resp.~$\alpha=0$),  
we obtain the conclusion. 
\end{proof}

We give a new criterion for a function belonging to  $D(\mathscr{E})_{\loc}^{\diamond}$: 
\begin{prop}\label{prop:newcriterion}
Let $L(x,{\d} y)$ be a kernel on $E\times\mathscr{B}(E)$. Suppose that the jumping measure $J$ satisfies $J({\d} x{\d} y)\leq D_2L(x,{\d} y)\m({\d} x)$ for some $D_2>0$.
Let ${\sf d}$ be a distance function on $E$ consistent with the given topology. Assume that there exists $\beta\in]0,+\infty[$ such that 
\begin{align*}
M_{\beta}:={\m}\text{\rm-ess-sup}_{x\in E}\int_E(1\land {\sf d}(x,y)^{\beta})L(x,{\d} y)<\infty.
\end{align*} 
Then, for $p\in[1,+\infty]$, {\rm(}resp.~$p\in[2,+\infty]${\rm)}, 
we have 
 $D(\mathscr{E})_{\loc}\cap L^p(E;{\m})\subset D(\mathscr{E})_{\loc}^{\diamond}$ 
and $D(\mathscr{E})_{e,\loc}\cap L^p(E;{\m})\subset D(\mathscr{E})_{e,\loc}^{\diamond}$ {\rm(}resp.~$D(\mathscr{E})_{\loc}\cap L^p(E;{\m})\subset D(\mathscr{E})_{\loc}^{\dag}$ 
and $D(\mathscr{E})_{e,\loc}\cap L^p(E;{\m})\subset D(\mathscr{E})_{e,\loc}^{\dag}${\rm)}.
\end{prop}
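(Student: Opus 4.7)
The plan is to fix arbitrary $U\Subset V\Subset E$ and exploit the positive gap $\delta:={\sf d}(\overline{U},E\setminus V)>0$, which exists because $\overline{U}$ is compact and contained in the open set $V$. Setting $c_{\delta}:=1\land\delta^{\beta}>0$, the key geometric observation is $\1_{E\setminus V}(y)\leq c_{\delta}^{-1}(1\land{\sf d}(x,y)^{\beta})$ whenever $x\in U$ and $y\in E\setminus V$ (and symmetrically when the roles of $x,y$ are swapped). Integrating against $N(x,\d y)$ and invoking $M_{\beta}<\infty$ yields $N(x,E\setminus V)\leq M_{\beta}/c_{\delta}$ for $\m$-a.e.~$x\in U$ and $N(x,U)\leq M_{\beta}/c_{\delta}$ for $\m$-a.e.~$x\in E\setminus V$. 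Combined with $J\leq D_{2}N(x,\d y)\m(\d x)$, this gives the useful bound $J(U\times(E\setminus V))\leq D_{2}M_{\beta}\m(U)/c_{\delta}<\infty$.

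For the $\diamond$-inclusion I would estimate $\int_{U\times(E\setminus V)}|\tilde u(x)-\tilde u(y)|\,J(\d x\,\d y)$ via $|\tilde u(x)-\tilde u(y)|\leq |\tilde u(x)|+|\tilde u(y)|$. The $|\tilde u(x)|$ part is harmless: the bound on $N(x,E\setminus V)$ above dominates it by $\frac{D_{2}M_{\beta}}{c_{\delta}}\int_{U}|\tilde u|\,\d\m$, which is finite because $u\in L^{p}(E;\m)\subset L^{1}_{\loc}(E;\m)$ and $\m(U)<\infty$. The $|\tilde u(y)|$ part is the crux, since a direct bound would leave us integrating $|\tilde u|$ over the possibly infinite-measure set $E\setminus V$. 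I would bypass this using the elementary inequality $|\tilde u|\leq 1+|\tilde u|^{p}$ (valid for every $p\in[1,+\infty)$) together with the symmetry of $J$ on $E^{2}\setminus{\sf diag}$:
\[
\int_{U\times(E\setminus V)}|\tilde u(y)|^{p}J(\d x\,\d y)=\int_{(E\setminus V)\times U}|\tilde u(x)|^{p}J(\d x\,\d y)\leq \frac{D_{2}M_{\beta}}{c_{\delta}}\|u\|_{L^{p}(\m)}^{p}<\infty,
\]
and the leftover constant contributes only $J(U\times(E\setminus V))$, which was already shown to be finite. The case $p=+\infty$ is trivial from $|\tilde u(y)|\leq \|u\|_{L^{\infty}(\m)}$.

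For the $\dag$-inclusion with $p\in[2,+\infty]$, I would split $U\times E\setminus{\sf diag}=(U\times V\setminus{\sf diag})\sqcup (U\times(E\setminus V))$. On the first piece, I replace $\tilde u$ by the quasi-continuous version $\tilde u_{V}$ of a local representative $u_{V}\in D(\mathscr{E})$ (respectively $D(\mathscr{E})_{e}$) with $u=u_{V}$ $\m$-a.e.~on $V$; since the marginals of $J$ do not charge $\mathscr{E}$-exceptional sets, the integral is controlled by $\mathscr{E}^{(j)}(u_{V},u_{V})\leq \mathscr{E}(u_{V},u_{V})<\infty$. On the second piece, I expand $(\tilde u(x)-\tilde u(y))^{2}\leq 2\tilde u(x)^{2}+2\tilde u(y)^{2}$ and repeat the $\diamond$ argument with $|\tilde u|$ replaced by $|\tilde u|^{2}$; the inequality $|\tilde u|^{2}\leq 1+|\tilde u|^{p}$ is precisely what forces the restriction $p\geq 2$. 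The main obstacle throughout is this $y$-side estimate for $p\in(1,+\infty)$: neither pure direct integration (which fails when $\m(E)=\infty$) nor pure symmetry (which alone would only handle $p=1$) suffices, and the combination of the splitting $|\tilde u|\leq 1+|\tilde u|^{p}$ with the symmetry of $J$ is the trick that captures the whole range of $p$.
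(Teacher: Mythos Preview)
Your proof is correct and follows essentially the same approach as the paper: both bound $N(x,V^c)$ and $N(x,U)$ via $M_{\beta}$ using the positive gap ${\sf d}(\overline{U},V^c)>0$, and then exploit the symmetry of $J$ together with $u\in L^p$ to control the $y$-side integral. The only cosmetic difference is that the paper first applies H\"older's inequality and then $|a-b|^p\leq 2^{p-1}(|a|^p+|b|^p)$, whereas you use the triangle inequality and the elementary bound $|t|\leq 1+|t|^p$; your handling of the $\dag$-case (splitting $U\times E\setminus{\sf diag}$ into the near piece governed by $u_V$ and the far piece $U\times(E\setminus V)$) is in fact spelled out more explicitly than in the paper, which only writes the far piece and leaves the near-diagonal contribution implicit.
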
  
\begin{proof}[{\bf Proof.}]
The assertion for $p=\infty$ is known. Let $p\in[1,+\infty[$. Take $u\in D(\mathscr{E})_{\loc}\cap L^p(E;{\m})$ or 
$u\in D(\mathscr{E})_{e,\loc}\cap L^p(E;{\m})$.  
Let $U\Subset V\subset E$ and set ${\sf d}(\overline{U},V^c):=\inf_{x\in \overline{U},y\in V^c}{\sf d}(x,y)$. 
Since $\overline{U}$ is compact, ${\sf d}(\overline{U},V^c)>0$. 
Take $r\in]0,{\sf d}(\overline{U},V^c)\land1]$. Then
\begin{align*}
\int_{U\times V^c}&|\tilde{u}(x)-\tilde{u}(y)|J({\d} x{\d} y)\\&\leq 
\left(\int_{U\times V^c}|\tilde{u}(x)-\tilde{u}(y)|^pJ({\d} x{\d} y) \right)^{\frac{1}{p}}J(U\times V^c)^{1-\frac{1}{p}}\\
&\leq \left(2^{p-1}\int_{U\times V^c}(|\tilde{u}(x)|^p+|\tilde{u}(y)|^p)J({\d} x{\d} y) \right)^{\frac{1}{p}}J(U\times V^c)^{1-\frac{1}{p}}\\
&=
\left(2^{p-1}\int_U|\tilde{u}(x)|^pJ({\d} x\times V^c)+2^{p-1}\int_{V^c}|\tilde{u}(x)|^pJ({\d} x\times U)
 \right)^{\frac{1}{p}}J(U\times V^c)^{1-\frac{1}{p}}\\
&\leq D_2^{\frac{1}{p}}\left(2^{p-1}\int_U|u(x)|^pL(x,V^c){\m}({\d} x)+2^{p-1}\int_{V^c}|u(x)|^p L(x,U){\m}({\d} x)
 \right)^{\frac{1}{p}}J(U\times V^c)^{1-\frac{1}{p}}\\
 &\leq 2 D_2^{\frac{1}{p}} \left(\frac{M_{\beta}}{r^{\beta}}\right)^{\frac{1}{p}}\|u\|^p_{L^p(E;{\m})}J(U\times V^c)^{1-\frac{1}{p}}<\infty,
\end{align*}
where we use 
\begin{align*}
L(x,V^c)&\leq L(x,B_r(x)^c)=\int_{{\sf d}(x,y)\geq r}\left(1\land \frac{{\sf  d}(x,y)^{\beta}}{r^{\beta}} \right)L(x,{\d} y)\\
&\leq \int_E\left(1\land \frac{{\sf  d}(x,y)^{\beta}}{r^{\beta}} \right)L(x,{\d} y)\leq\frac{1}{r^{\beta}}M_{\beta}\quad{\m}\text{-a.e.~}x\in U
\end{align*}
and $L(x,U)\leq \frac{1}{r^{\beta}}M_{\beta}$ ${\m}$-a.e.~on $V^c$. 
When $p\in[2,+\infty[$, we can similarly obtain
\begin{align*}
\int_{U\times V^c}|\tilde{u}(x)-\tilde{u}(y)|^2J({\d} x{\d} y)<\infty.
\end{align*}
\end{proof}
Next corollary corresponds to \cite[Lemma~2.2]{MasamuneUemura2}.
\begin{cor}\label{cor:newcriterion}
Let $L(x,{\d} y)$ be a kernel on $E\times\mathscr{B}(E)$.
Suppose that the jumping measure $J$ satisfies $J({\d} x{\d} y)\leq D_2L(x,{\d} y){\m}({\d} x)$ for some $D_2>0$. 
Let ${\sf d}$ be a distance function consistent with the given topology. Assume 
that there exists $\beta\in]0,+\infty[$ such that 
\begin{align}
M_{\beta}:={\m}\text{\rm-ess-sup}_{x\in E}\int_E(1\land {\sf d}(x,y)^{\beta})L(x,{\d} y)<\infty.\label{eq:MbetaFinite}
\end{align} 
Then, for $u\in D(\mathscr{E})_{\loc}\cap L^p(E;{\m})$ with $p\in[1,+\infty]$ and $v\in D(\mathscr{E})_c\cap L^{\infty}(E;{\m})$, 
the quantity $\mathscr{E}_{\alpha}(u,v)$ for $\alpha\geq0$ is well-defined. Moreover, 
for $u\in D(\mathscr{E})_{e,\loc}\cap L^p(E;{\m})$ with $p\in[1,+\infty]$ and $v\in D(\mathscr{E})_c\cap L^{\infty}(E;{\m})$, 
the quantity $\mathscr{E}(u,v)$ is well-defined. 
\end{cor}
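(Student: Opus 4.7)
The plan is to derive the corollary as a direct consequence of Proposition~\ref{prop:newcriterion} combined with Corollary~\ref{cor:welldefinedenergy}, specialized to the case $D = E$. With this choice the part space $(\mathscr{E}_E, D(\mathscr{E}_E))$ coincides with $(\mathscr{E}, D(\mathscr{E}))$, so the class $D(\mathscr{E})_c \cap L^{\infty}(E;\m)$ of test functions appearing in the present statement is exactly the class $D(\mathscr{E}_D)_c \cap L^{\infty}(D;\m)$ invoked in the earlier corollary; no further matching of test-function hypotheses is required.

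First I would apply Proposition~\ref{prop:newcriterion}: the hypotheses on the kernel $N$, on the jumping measure $J$, and the moment bound \eqref{eq:MbetaFinite} are identical to those of the proposition. Hence, for $p \in [1,+\infty]$, the inclusions $D(\mathscr{E})_{\loc} \cap L^p(E;\m) \subset D(\mathscr{E})_{\loc}^{\diamond}$ and $D(\mathscr{E})_{e,\loc} \cap L^p(E;\m) \subset D(\mathscr{E})_{e,\loc}^{\diamond}$ hold, so the given function $u$ automatically falls into $D(\mathscr{E})_{\loc}^{\diamond}$ (respectively $D(\mathscr{E})_{e,\loc}^{\diamond}$).

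Second, Corollary~\ref{cor:welldefinedenergy} applied with $D = E$ gives the well-definedness of $\mathscr{E}(u,v)$ whenever $u \in D(\mathscr{E})_{e,\loc}^{\diamond}$ and $v \in D(\mathscr{E})_c \cap L^{\infty}(E;\m)$, and of $\mathscr{E}_{\alpha}(u,v)$ for $\alpha > 0$ under $u \in D(\mathscr{E})_{\loc}^{\diamond}$. The only remaining check, pertinent to the case $\alpha > 0$, is the finiteness of the zero-order term $\alpha \int_E u v \, \d\m$; this is immediate since $v$ is bounded with compact support and $u \in L^p(E;\m) \subset L^1_{\loc}(E;\m)$, so that $uv \in L^1(E;\m)$.

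No genuine obstacle should arise: the analytic core, that is, the tail estimate for $J$ coming from the kernel bound $J \le D_2 N \otimes \m$ and the moment condition on $N$, has already been carried out in Proposition~\ref{prop:newcriterion}, and the control of the Beurling--Deny pieces $\mu^{(c)}_{\langle u,v\rangle}$, the jump integral, and the killing term is settled in Lemma~\ref{lem:weldefinedness} together with Corollary~\ref{cor:welldefinedenergy}. What remains is a bookkeeping identification of the hypotheses; the statement then follows.
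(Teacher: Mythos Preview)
Your proposal is correct and follows essentially the same route as the paper's own proof, which is the one-line remark that the assertion is a combination of Lemma~\ref{lem:weldefinedness} and Proposition~\ref{prop:newcriterion}; you simply invoke Corollary~\ref{cor:welldefinedenergy} (itself an immediate consequence of Lemma~\ref{lem:weldefinedness}) in place of the lemma, and your extra verification of the finiteness of $\alpha\int_E uv\,\d\m$ is redundant since it is already contained in Corollary~\ref{cor:welldefinedenergy}.
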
 
\begin{proof}[{\bf Proof.}]
The assertion is a combination of Corollary~\ref{cor:welldefinedenergy}
and Proposition~\ref{prop:newcriterion}.
\end{proof}
\begin{prop}
\label{cor:Criteion}
Let $L(x,{\d} y)$ be a kernel on $E\times\mathscr{B}(E)$ and ${\sf d}$ be a distance function consistent with the underlying topology.
Assume there exists $D_2, C > 0$ such that:
\begin{enumerate}
    \item $L(x,{\d} y) 
    \leq D_2 \frac{{\m}({\d} y)}{{\sf d}(x,y)^p}$,
    \item ${\m}(B_r(x))\leq Cr^d$ for every $x \in E$ and $r > 0$,
\end{enumerate}
for some $d, p \in \R$, $p >d\geq0$. Here $B_r(x):=\{y\in E\mid {\sf d}(x,y)<r\}$. Then $M_\beta < \infty$, i.e.,
\eqref{eq:MbetaFinite} holds, for any $\beta > p - d$.
\end{prop}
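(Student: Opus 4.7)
The plan is to split the integral defining $M_\beta$ at radius $1$ and then dyadically decompose each region into annuli centered at $x$. For each fixed $x\in E$, write
\[
\int_E (1\land {\sf d}(x,y)^\beta)\,N(x,\d y) = I_1(x)+I_2(x),
\]
where $I_1(x)$ is the integral over $B_1(x)$ (where $1\land{\sf d}(x,y)^\beta={\sf d}(x,y)^\beta$) and $I_2(x)$ the integral over $E\setminus B_1(x)$ (where $1\land{\sf d}(x,y)^\beta=1$). Using hypothesis (1), these reduce to
\[
I_1(x)\le D_2\int_{B_1(x)} {\sf d}(x,y)^{\beta-p}\,\m(\d y),\qquad
I_2(x)\le D_2\int_{E\setminus B_1(x)} \frac{1}{{\sf d}(x,y)^p}\,\m(\d y),
\]
so it suffices to bound these two $\m$-integrals uniformly in $x$.

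For $I_2(x)$, decompose $E\setminus B_1(x)=\bigsqcup_{k\ge 0}(B_{2^{k+1}}(x)\setminus B_{2^k}(x))$. On the $k$-th annulus ${\sf d}(x,y)\ge 2^k$, and by hypothesis (2), $\m(B_{2^{k+1}}(x))\le C\,2^{(k+1)d}$. Hence the $k$-th piece is at most $C\,2^{(k+1)d}2^{-kp}$ and summation gives a geometric series with ratio $2^{d-p}<1$ since $p>d$; so $I_2(x)$ is bounded by a constant depending only on $C$, $D_2$, $p$, $d$.

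For $I_1(x)$, decompose $B_1(x)=\bigsqcup_{k\ge 0}(B_{2^{-k}}(x)\setminus B_{2^{-k-1}}(x))$. On the $k$-th annulus, $2^{-k-1}\le {\sf d}(x,y)<2^{-k}$ and $\m(B_{2^{-k}}(x))\le C\,2^{-kd}$, so ${\sf d}(x,y)^{\beta-p}\le 2^{(k+1)|\beta-p|}2^{-k(\beta-p)}$ (handled uniformly regardless of the sign of $\beta-p$), giving an upper bound of the form $C'\,2^{-k(\beta-p+d)}$ on the $k$-th piece. Summing gives a geometric series with ratio $2^{p-d-\beta}<1$ since $\beta>p-d$, so $I_1(x)$ is likewise bounded by a constant depending only on $C$, $D_2$, $p$, $d$, $\beta$. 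Combining the two estimates yields $M_\beta<+\infty$.

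The only mildly delicate point is the near-zero region $I_1(x)$, where the integrand $ {\sf d}(x,y)^{\beta-p}$ is singular when $\beta<p$; the volume bound $\m(B_r(x))\le Cr^d$ absorbs exactly the singularity needed, and the precise threshold $\beta>p-d$ is what makes the dyadic geometric series converge. All estimates are uniform in $x$, so the $\m$-essential supremum is finite.
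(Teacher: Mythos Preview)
Your proof is correct and follows essentially the same dyadic-annulus decomposition as the paper's proof: split at radius $1$, then sum over annuli $\{2^{-k-1}\le {\sf d}(x,y)<2^{-k}\}$ for the near part and $\{2^{k}\le {\sf d}(x,y)<2^{k+1}\}$ for the far part, using the volume bound $\m(B_r(x))\le Cr^d$ to obtain convergent geometric series under $\beta>p-d$ and $p>d$ respectively. One small slip: your displayed bound ${\sf d}(x,y)^{\beta-p}\le 2^{(k+1)|\beta-p|}2^{-k(\beta-p)}$ should read $2^{|\beta-p|}2^{-k(\beta-p)}$ (the factor is independent of $k$); with this correction the claimed estimate $C'\,2^{-k(\beta-p+d)}$ for the $k$-th piece and the remainder of the argument go through exactly as you state, and this matches the paper (which merely separates the trivial case $p\le\beta$ explicitly).
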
 
\begin{proof}[{\bf Proof.}]
We calculate 
\begin{align*}
\int_E&(1\land {\sf d}(x,y)^{\beta})L(x,{\d} y)\\
&=D_2
\int_E\frac{(1\land {\sf d}(x,y)^{\beta})}{{\sf d}(x,y)^p}{\m}({\d} y)\\
&=D_2\sum_{n=0}^{\infty}\int_{1/2^{n+1}\leq{\sf d}(x,y)<1/2^n}{\sf d}(x,y)^{\beta-p}{\m}({\d} x)+D_2\sum_{n=0}^{\infty}\int_{2^n\leq{\sf d}(x,y)<2^{n+1}}{\sf d}(x,y)^{-p}{\m}({\d} x)\\
&\leq D_2\left\{\begin{array}{ll}\displaystyle{{\m}(B_1(x))}, & 
\displaystyle{(p\leq\beta)} \\ \displaystyle{\sum_{n=0}^{\infty}(2^{n+1})^{p-\beta}{\m}(B_{\frac{1}{2^n}}(x))}, & 
\displaystyle{(\beta<p<d+\beta)}\end{array}\right. 
+ D_2\sum_{n=0}^{\infty}(2^n)^{-p}{\m}(B_{2^{n+1}}(x))\\
&\leq D_2
\left\{\begin{array}{ll}\displaystyle{C},  & \displaystyle{(p\leq\beta)}
 \\
 \displaystyle{C2^{p-\beta}\sum_{n=0}^{\infty}\left(\frac{1}{2^n}\right)^{d-p+\beta}},
  & \displaystyle{(\beta<p<d+\beta)}\end{array}\right. 
+ C2^dD_2\sum_{n=0}^{\infty}\left(\frac{1}{2^n}\right)^{p-d}<\infty.
\end{align*}
Note here that the last quantity is independent of $x\in E$. 
\end{proof}

Now we define the notion of $\mathscr{E}_{\alpha}$-subharmonicity in $D$ as follows: 

\begin{defn}[{{\bf {\boldmath$\mathscr{E}_{\alpha}$}-subharmonicity}}]\label{df:subharmonicity}
{\rm Fix $\alpha>0$ and an open set $D$. A function $u:E\to\R$ is called \emph{$\mathscr{E}_{\alpha}$-subharmonic} (resp.~\emph{$\mathscr{E}_{\alpha}$-superharmonic}) \emph{in $D$} if $u\in 
D(\mathscr{E})_{\loc}^{\diamond}$ 
and 
\begin{align}
\mathscr{E}_{\alpha}(u,v)\leq0\quad \text{\rm (resp.~}\geq0\text{\rm )}\label{eq:Subharmonicity}
\end{align}  
for all $v\in D(\mathscr{E}_D)_c\cap L^{\infty}(D;{\m})_+$. 
Here $L^{\infty}(D;{\m})_+$ denotes the family of non-negative bounded ${\m}$-measurable function on $D$. 
A function $u:E\to\R$ is called \emph{$\mathscr{E}_{\alpha}$-harmonic in $D$} if it is 
 $\mathscr{E}_{\alpha}$-subharmonic and $\mathscr{E}_{\alpha}$-superharmonic in $D$. 
A function $u:E\to\R$ is called \emph{$\mathscr{E}$-subharmonic} (resp.~\emph{$\mathscr{E}$-superharmonic}) \emph{in $D$} if $u\in 
D(\mathscr{E})_{e,\loc}^{\diamond}$ 
and 
\begin{align}
\mathscr{E}(u,v)\leq0\quad \text{\rm (resp.~}\geq0\text{\rm )}\label{eq:Subharmonicity*}
\end{align}  
for all $v\in D(\mathscr{E}_D)_c\cap L^{\infty}(D;{\m})_+$. 
A function $u:E\to\R$ is called \emph{$\mathscr{E}$-harmonic in $D$} if it is 
 $\mathscr{E}$-subharmonic and $\mathscr{E}$-superharmonic in $D$. 
}
\end{defn} 
It is easy to see that $\1\in 
D(\mathscr{E})_{\loc}^{\dag}\subset 
D(\mathscr{E})_{\loc}^{\diamond}$ is $\mathscr{E}_{\alpha}$-superharmonic in $D$ under $\alpha\geq0$. Moreover, it is $\mathscr{E}$-harmonic in $D$ provided $\kappa(D)=0$.

Let $\mathscr{C}$ denote the special standard core of $(\mathscr{E},D(\mathscr{E}))$. More precisely, $\mathscr{C}$ is a dense linear subalgebra of $C_c(E)$ such that for any compact set $K$ and a relatively compact open set $G$ with $K\subset G$ there exists non-negative 
$u\in\mathscr{C}$ such that $u=1$ on $K$ and $u=0$ on $E\setminus G$,   
and for each $\varepsilon>0$ there exists a real valued function $\varphi_{\varepsilon}$ satisfying $\varphi_{\varepsilon}(t)=t$ for $t\in[0,1]$, $-\varepsilon\leq\varphi_{\varepsilon}\leq 1+\varepsilon$ on $\R$, and $0\leq \varphi_{\varepsilon}(t)-\varphi_{\varepsilon}(s)\leq t-s$ 
whenever $s<t$ (see \cite[p.~6]{FOT}). For any open subset $G$ of $E$, we set 
$\mathscr{C}_G:=\{u\in\mathscr{C}\mid u=0\text{ on }E\setminus G\}$ and 
$(\mathscr{C}_G)_+:=\{u\in\mathscr{C}_G\mid u\geq0\text{ on }E\}$.
Next theorem generalizes \cite[Theorem~2.8]{CK} without assuming the local boundedness on $D$ for $u\in 
D(\mathscr{E})_{\loc}^{\diamond}$. 

\begin{thm}\label{thm:CharacSubhamonicity}
Fix $\alpha\geq0$. For $u\in 
D(\mathscr{E})_{\loc}^{\diamond}$,
the following are equivalent:
\begin{enumerate}
\item[\rm(1)] $u$ is $\mathscr{E}_{\alpha}$-subharmonic in $D$. 
\item[\rm(2)] $u$ satisfies \eqref{eq:Subharmonicity} for any $v\in D(\mathscr{E}_D)\cap C_c(D)_+$.
\item[\rm(3)] $u$ satisfies \eqref{eq:Subharmonicity} for any $v\in(\mathscr{C}_{D})_+$.
\end{enumerate} 
Moreover, for $u\in 
D(\mathscr{E})_{e,\loc}^{\diamond}$, a similar equivalence also 
holds under $\alpha=0$. 
\end{thm}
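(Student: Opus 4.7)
(1)$\Rightarrow$(2)$\Rightarrow$(3) is immediate from the inclusions
$(\mathscr{C}_D)_+\subset D(\mathscr{E}_D)\cap C_c(D)_+\subset D(\mathscr{E}_D)_c\cap L^\infty(D;\m)_+$: any element of $\mathscr{C}$ supported in $D$ lies in $D(\mathscr{E}_D)$ and is continuous with compact support, hence $L^\infty$-bounded. The real content is (3)$\Rightarrow$(1), which I would establish by approximating an arbitrary test function $v$ from the largest class by $v_n\in(\mathscr{C}_D)_+$ so that $\mathscr{E}_\alpha(u,v_n)\to\mathscr{E}_\alpha(u,v)$; since $\mathscr{E}_\alpha(u,v_n)\le 0$ by (3), the non-positivity then passes to the limit. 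The argument for the extended case $(\alpha=0,\,u\in D(\mathscr{E})_{e,\loc}^{\diamond})$ is identical after dropping the $\alpha$-perturbation.

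The approximation is built in two steps matching (2)$\Rightarrow$(1) and (3)$\Rightarrow$(2). Fix open sets $U,V$ with $\supp[v]\subset U\Subset V\Subset D$. For (2)$\Rightarrow$(1), exploit that $(\mathscr{E}_D,D(\mathscr{E}_D))$ is itself a regular Dirichlet form on $L^2(D;\m)$: its special standard core is dense in $D(\mathscr{E}_D)$, so given $v\in D(\mathscr{E}_D)_c\cap L^\infty(D;\m)_+$ one finds continuous approximants which, after the normal contraction $0\vee(\cdot)\wedge\|v\|_\infty$ and multiplication by a continuous cutoff valued in $[0,1]$ and equal to $1$ on $\overline U$, yield a sequence in $D(\mathscr{E}_D)\cap C_c(D)_+$ with uniformly bounded $L^\infty$-norm, supports inside $V$, and convergence in $\mathscr{E}_1^{1/2}$-norm. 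For (3)$\Rightarrow$(2), given $v\in D(\mathscr{E}_D)\cap C_c(D)_+$, use the simultaneous density of $\mathscr{C}$ in $D(\mathscr{E})\cap C_c(E)$ in both $\mathscr{E}_1^{1/2}$- and uniform norms together with multiplication by a cutoff $\psi\in\mathscr{C}$ (with $\psi\equiv 1$ on $\overline U$ and $\supp[\psi]\subset V$, furnished by the special-standard-core property) to approximate $v$ by $\psi w_k\in\mathscr{C}_D$ in both senses; the further reduction to non-negative approximants in $(\mathscr{C}_D)_+$ is obtained by a sign-correction/diagonal argument exploiting that $v\ge 0$ and $\|(\psi w_k)^-\|_\infty\to 0$.

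Once $v_n\in(\mathscr{C}_D)_+$ is in hand with $v_n\to v$ in $\mathscr{E}_1^{1/2}$-norm, $\sup_n\|v_n\|_\infty<\infty$, and $\supp[v_n]\subset V$, convergence $\mathscr{E}_\alpha(u,v_n)\to\mathscr{E}_\alpha(u,v)$ follows from the estimates displayed in the proof of Lemma~\ref{lem:weldefinedness}. Picking $u_V\in D(\mathscr{E})$ (or $D(\mathscr{E})_e$ when $\alpha=0$) with $u=u_V$ $\m$-a.e.\ on $V$, one bounds $|\mathscr{E}_\alpha(u,v_n-v)|$ by a sum of
\[
\mathscr{E}(u_V,u_V)^{1/2}\,\mathscr{E}(v_n-v,v_n-v)^{1/2}+2\|v_n-v\|_{L^\infty(D;\m)}\int_{U\times(D\setminus V)}|\tilde u(x)-\tilde u(y)|\,J(\d x\,\d y)
\]
and analogous $\kappa$- and $\m$-integral contributions handled by Cauchy--Schwarz, all vanishing under the joint $\mathscr{E}_1^{1/2}$- and $L^\infty$-convergence (the second term is controlled thanks to $u\in D(\mathscr{E})_{\loc}^{\diamond}$). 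The principal obstacle is the second step of (3)$\Rightarrow$(2): because $\mathscr{C}$ is not in general closed under positive-part or truncation, producing non-negative approximants in $\mathscr{C}_D$ with a uniform $L^\infty$-bound and fixed compact support requires a delicate combination of sign correction, cutoff multiplication, and the combined $\mathscr{E}_1^{1/2}$- and uniform-density of the special standard core.
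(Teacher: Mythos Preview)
Your overall strategy is sound, but the convergence argument has a genuine gap at the step (2)$\Rightarrow$(1) (equivalently, in the combined (3)$\Rightarrow$(1)). Your displayed estimate controls the off-diagonal jump contribution by
\[
2\|v_n-v\|_{L^\infty(D;\m)}\int_{U\times(E\setminus V)}|\tilde u(x)-\tilde u(y)|\,J(\d x\,\d y),
\]
and you claim this vanishes ``under the joint $\mathscr{E}_1^{1/2}$- and $L^\infty$-convergence''. But a generic $v\in D(\mathscr{E}_D)_c\cap L^\infty(D;\m)_+$ need not be continuous, so it \emph{cannot} be approximated in $L^\infty$ by continuous $v_n$; you only have a uniform $L^\infty$ bound, not $L^\infty$-convergence. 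Consequently this term does not go to zero under the hypotheses you have arranged.

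The paper repairs this by keeping $|v(x)-\wh w_N(x)|$ \emph{inside} the jump integral and applying dominated convergence: one passes to a subsequence so that the approximants converge $\mathscr{E}$-q.e., and the integrand is dominated by a fixed multiple of $|\tilde u(x)-\tilde u(y)|$ on $U\times(E\setminus V)$, which is $J$-integrable exactly by the $\diamond$-condition. To make this work the paper goes directly from (3) to (1), building non-negative approximants in $\mathscr{C}_D$ via the special-standard-core contraction $\varphi_{1/n}$: with $v_n\in\mathscr{C}_W$ converging to $v$ in $\mathscr{E}_1^{1/2}$ and q.e., one sets $w_n:=\varphi_{1/n}\circ v_n+\tfrac1n\psi\in(\mathscr{C}_D)_+$ (uniformly bounded, supports in $\supp[\psi]$, $\mathscr{E}_1$-bounded, q.e.\ convergent to $v$), and then passes to Ces\`aro means of a subsequence to upgrade $\mathscr{E}_1$-boundedness to $\mathscr{E}_1$-convergence while preserving q.e.\ convergence and the uniform bound. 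This simultaneously resolves both difficulties you flag at the end: positivity within $\mathscr{C}$ without taking $(\cdot)^+$, and convergence of the jump term without $L^\infty$-convergence.
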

\begin{proof}[{\bf Proof.}]
The implications (1)$\Longrightarrow$(2)$\Longrightarrow$(3) are trivial, thus we only need to prove (3)$\Longrightarrow$(1). 
Take $v\in D(\mathscr{E}_D)_c\cap L^{\infty}(D;{\m})_+$.
For the case $v\equiv0$, the assertion is trivial. 
Up to replacing with $v/\|v\|_{\infty}$ under $v\not\equiv0$, we may assume $0\leq v\leq1$ ${\m}$-a.e. on $D$. 
Take an open set $W$ with $W\Subset D$ and ${\rm supp}[v]\subset W$. 
Then, there exists a sequence $v_n\in\mathscr{C}_W$ such that 
$v_n\to v$ in $\mathscr{E}_1^{1/2}$-norm.  
By taking a subsequence if necessary, we may assume $\lim_{n\to\infty}v_n=\tilde{v}$ for $\mathscr{E}$-q.e.~on $E$. 
Take $\psi\in\mathscr{C}_D$ with $0\leq\psi\leq1$ and $\psi=1$ on $W$ and let $\varphi_{\eps}$ be the contraction function which appears in the definition of the special standard core $\mathscr{C}$. 
We define the function $w_n:=\varphi_{1/n}\circ v_n+\frac{1}{n}\psi$ which belongs to $(\mathscr{C}_{D})_+$. 
Indeed, since $v_n=0$ on $W^c$, $w_n=\frac{\psi}{n}\geq0$ on $W^c$. On the other hand, 
$w_n\geq-\frac{1}{n}+\frac{\psi}{n}=0$ on $W$. Thus $w_n\geq0$ on $D$. 
Moreover, $\psi(x)=0$ implies $w_n(x)=0$, because $\psi(x)=0$ induces $x\notin W$, hence $v_n(x)=0$. 
Furthermore, $\lim_{n\to\infty}w_n=\tilde{v}$ for $\mathscr{E}$-q.e. on $E$ and $\{w_n\}$ is $\mathscr{E}_1$-bounded.
The latter implies the existence of a subsequence $\{n_i\}$ such that the sequence $\{\wh{w}_N\}\subset(\mathscr{C}_D)_+$ of the Ces\`aro means $\wh{w}_N:=\frac{1}{N}\sum_{i=1}^Nw_{n_i} \in (\mathscr{C}_D)_+$ $\mathscr{E}_1$-converges to $v$.
Note that the bound from above is preserved and we still have $\supp[\wh{w}_N] \subset \supp[\psi]$.
Taking open sets $U,V$ with $\supp[\psi]\cup{\rm supp}[v]\subset U\Subset V\Subset D$, we have
\begin{align*}
|\mathscr{E}_{\alpha}(u,v-\wh{w}_N)|\leq& 
\mu_{\langle u_U\rangle}^{(c)}(E)^{\frac12}\mu_{\langle v-\wh{w}_N\rangle}^{(c)}(E)^{\frac12}+
\mathscr{E}(u_V,u_V)^{\frac12}\mathscr{E}(v-\wh{w}_N,v-\wh{w}_N)^{\frac12}
\\
&+2\int_{U\times (E\setminus V)}|\tilde{u}(x)-\tilde{u}(y)|
|v(x)-\wh{w}_N(x)|J({\d} x{\d} y)\\
&+\|\tilde{u}_U\|_{L^2(E;\kappa)}\|v-\wh{w}_N\|_{L^2(E;\kappa)}
+\alpha(u_U,v-\wh{w}_N)_{\m},
\end{align*}
which converges to $0$ as $N \to +\infty$ by the Lebesgue's dominated convergence theorem and $\int_{U\times V^c}|\tilde{u}(x)-\tilde{u}(y)|J({\d} x{\d} y)<\infty$. Here $u_V$ is the function in $D(\mathscr{E})$ (resp.~$D(\mathscr{E})_e$) such that $u=u_V$ $\m$-a.e.~on $V$ if $u\in D(\mathscr{E})^{\diamond}_{\loc}$ (resp.~$u\in D(\mathscr{E})^{\diamond}_{e,\loc}$).  
We conclude that, 
\begin{equation*}
\mathscr{E}_{\alpha}(u,v)=\lim_{N\to\infty}\mathscr{E}_{\alpha}(u,\wh{w}_N)\leq0.
\end{equation*}
\end{proof}
\begin{thm}\label{thm:CharacSubhamonicity*}
Suppose that $u\in D(\mathscr{E})_{\loc}^{\dag}(\subset 
D(\mathscr{E})_{\loc}^{\diamond})$ is $\mathscr{E}_{\alpha}$-subharmonic in $D$ in the sense of Definition~\ref{df:subharmonicity}. 
Then $\mathscr{E}_{\alpha}(u,v)\leq0$ for any $v\in D(\mathscr{E}_D)_c^+$. 
\end{thm}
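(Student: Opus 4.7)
The plan is to approximate an arbitrary $v\in D(\mathscr{E}_D)_c^+$ by the truncations $v_k:=v\wedge k$, which lie in $D(\mathscr{E}_D)_c\cap L^{\infty}(D;\m)_+$, and pass to the limit in the inequality $\mathscr{E}_{\alpha}(u,v_k)\leq0$ afforded by Definition~\ref{df:subharmonicity}. Since $v\geq0$, the sequence $v_k\uparrow v$ is monotone, and the standard monotone-convergence property of Markovian closed forms yields $v_k\to v$ in the $\mathscr{E}_1^{1/2}$-norm. Non-negativity of the three pieces of the Beurling--Deny decomposition then forces each of $\mathscr{E}^{(c)}(v-v_k,v-v_k)$, $\mathscr{E}^{(j)}(v-v_k,v-v_k)$, $\mathscr{E}^{(\kappa)}(v-v_k,v-v_k)$ and $\|v-v_k\|_{L^2(\m)}$ to tend to $0$, so it suffices to prove that $\mathscr{E}_{\alpha}(u,v-v_k)\to0$.

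Fix open sets $U,V$ with $\supp[v]\subset U\Subset V\Subset D$ and $u_V\in D(\mathscr{E})$ with $u=u_V$ $\m$-a.e.\ on $V$. Expanding $\mathscr{E}_{\alpha}(u,v-v_k)$ via the Beurling--Deny decomposition and splitting the jumping integral as $\int_{V^2\setminus{\sf diag}}+2\int_{V\times(E\setminus V)}$, Cauchy--Schwarz dominates the continuous part, the $V^2$-portion of the jump part, the killing part and the $L^2$-part by fixed constants depending on $u_V$ times $\mathscr{E}_1^{1/2}(v-v_k,v-v_k)$, each of which vanishes in the limit by the preceding paragraph. Since $\supp[v-v_k]\subset U\subset V$, the only remaining term is the off-diagonal jump contribution
\begin{equation*}
2\int_{U\times(E\setminus V)}|\tilde u(x)-\tilde u(y)|\,(\tilde v(x)-k)_+\,J(\d x\d y).
\end{equation*}

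The main obstacle is to control this last term without assuming that $v$ is bounded; this is precisely where the upgrade from $\diamond$ to $\dag$ becomes decisive. By Cauchy--Schwarz it is at most
\begin{equation*}
2\left(\int_{U\times E}(\tilde u(x)-\tilde u(y))^2\,J(\d x\d y)\right)^{1/2}\left(\int_{U\times(E\setminus V)}(\tilde v(x)-k)_+^2\,J(\d x\d y)\right)^{1/2}.
\end{equation*}
The first factor is finite by $u\in D(\mathscr{E})_{\loc}^{\dag}$. For the second, since $v\in D(\mathscr{E})\subset D(\mathscr{E})_{\loc}^{\dag}$ and $\tilde v=0$ $\mathscr{E}$-q.e.\ on $E\setminus V$ (because $\supp[v]\subset V$), we have the envelope
\begin{equation*}
\int_{U\times(E\setminus V)}\tilde v(x)^2\,J(\d x\d y)=\int_{U\times(E\setminus V)}(\tilde v(x)-\tilde v(y))^2\,J(\d x\d y)<\infty,
\end{equation*}
together with $(\tilde v-k)_+^2\leq\tilde v^2$ and $(\tilde v-k)_+\to0$ $\mathscr{E}$-q.e.\ as $k\to\infty$; dominated convergence drives the second factor to zero. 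Combining everything yields $\mathscr{E}_{\alpha}(u,v)=\lim_{k\to\infty}\mathscr{E}_{\alpha}(u,v_k)\leq0$, as required.
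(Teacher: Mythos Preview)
Your proof is correct and shares the paper's core mechanism: approximate $v$ by bounded non-negative test functions supported in $U$, and handle the off-diagonal jump term via Cauchy--Schwarz, using the $\dag$-hypothesis to guarantee $\int_{U\times E}(\tilde u(x)-\tilde u(y))^2\,J<\infty$. The paper begins from an arbitrary $\mathscr{E}_1$-approximating sequence $v_n\in D(\mathscr{E}_D)_c\cap L^\infty$, passes to $v_n^+$, and then invokes Banach--Saks to extract Ces\`aro means $\hat v_N$ that $\mathscr{E}_1$-converge to $v$; your explicit truncations $v_k=v\wedge k$ are more economical and bypass the Ces\`aro step, since the normal-contraction bound $\mathscr{E}(v\wedge k,v\wedge k)\leq\mathscr{E}(v,v)$ together with $L^2$-convergence already forces strong $\mathscr{E}_1$-convergence. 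Your dominated-convergence argument for the second Cauchy--Schwarz factor is fine, though it is worth noting that the same conclusion follows directly from the $\mathscr{E}_1$-convergence you have already established, via
\[
\int_{U\times(E\setminus V)}(\tilde v(x)-k)_+^2\,J(\d x\d y)
=\int_{U\times(E\setminus V)}\bigl(\widetilde{(v-v_k)}(x)-\widetilde{(v-v_k)}(y)\bigr)^2\,J(\d x\d y)
\leq\mathscr{E}^{(j)}(v-v_k,v-v_k)\to0,
\]
which is precisely how the paper handles the corresponding term.
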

\begin{proof}[{\bf Proof}.]
Suppose $v\in D(\mathscr{E}_D)_c^+$. 
Take open sets $U,V$ with ${\rm supp}[v]\subset U\Subset V\Subset D$. 
Then $v\in D(\mathscr{E}_U)_c^+$. 
Set $v_n:=v\land n$. Then $v_n\in D(\mathscr{E}_D)_c\cap L^{\infty}(D;{\m})_+$ and $v_n$ converges to $v$ with respect to $\mathscr{E}_1^{1/2}$ as $n\to\infty$ by \cite[Theorem~1.4.2(iii)]{FOT}. 
We have 
\begin{align*}
|\mathscr{E}_{\alpha}(u,v-v_n)|\leq& 
\mu_{\langle u_U\rangle}^{(c)}(E)^{\frac12}\mu_{\langle v-v_n\rangle}^{(c)}(E)^{\frac12}+
\mathscr{E}(u_V,u_V)^{\frac12}\mathscr{E}({v}-v_n,{v}-v_n)^{\frac12}
\\
&+2\int_{U\times (E\setminus V)}|\tilde{u}(x)-\tilde{u}(y)|
|\tilde{v}(x)-\tilde{v}_n(x)-\tilde{v}(y)+\tilde{v}_n(y)|J({\d} x{\d} y)\\
&+\|\tilde{u}_U\|_{L^2(E;\kappa)}\|v-v_n\|_{L^2(E;\kappa)}
+\alpha(u_U,v-v_n)_{\m},\\
&\leq \mu_{\langle u_U\rangle}^{(c)}(E)^{\frac12}\mu_{\langle v-v_n\rangle}^{(c)}(E)^{\frac12}+
\mathscr{E}(u_V,u_V)^{\frac12}
\mathscr{E}(v-v_n,v-v_n)^{\frac12}
\\
&+2\left(\int_{U\times (E\setminus V)}|\tilde{u}(x)-\tilde{u}(y)|^2
J({\d} x{\d} y)\right)^{\frac12}\mathscr{E}(
v-v_n,v-v_n)^{\frac12}\\
&+\|\tilde{u}_U\|_{L^2(E;\kappa)}\|\tilde{v}-\tilde{v}_n\|_{L^2(E;\kappa)}
+\alpha(u_U,v-v_n)_{\m},
\end{align*}
which converges to $0$ as $n \to +\infty$. 
We now conclude 
\begin{align*}
\mathscr{E}_{\alpha}(u,v)=\lim_{n\to\infty}\mathscr{E}_{\alpha}(u,v_n)\leq0.
\end{align*} 
\end{proof}
\begin{remark}
{\rm 
\begin{enumerate}
\item[{\rm (1)}] The definition of $\mathscr{E}_{\alpha}$-(sub)harmonicity on $D$ treated in \cite[Definition~2.6]{Chen} or  \cite[Definition~2.6]{CK} is 
a bit restrictive since it requires local boundedness of $u$ on $D$.
\item[{\rm (2)}]
Our definition of $\mathscr{E}_{\alpha}$-subharmonicity on $D$ is a slightly wider notion than the 
$\mathscr{E}_{\alpha}$-subharmonicity treated in \cite{HuaKellerLenzSchmidt}. 
In particular, our $\mathscr{E}_{\alpha}$-subharmonicity (in $D$) covers the notion of 
$\mathscr{E}_{\alpha}$-subharmonicity treated in \cite[Definition~5.3]{MasamuneUemura2} (see Corollary~\ref{cor:newcriterion} above).
\end{enumerate}
}
\end{remark}
Finally, we give a sufficient condition to 
$u\in D(\mathscr{E})^{\diamond}_{\loc}$ for $u\in D(\mathscr{E})_{\loc}$. 
For this, we need the following lemmas. 
\begin{lem}\label{lem:jumpkernelequivalence}
Let ${\sf d}$ be a distance function consistent with the given topology on 
 $E$. 
Fix $o\in E$, $p_1,p_2>0$ and take $U\Subset V\Subset E$. 
\begin{enumerate}
\item[\rm(1)] There exists $C_2=C_2(o,\, p_2,U,V)>0$ such that for any $x\in U$ and $y\notin V$,
\begin{align*}
\frac{1}{{\sf d}(x,y)^{p_2}}\leq C_2\left(1\land\frac{1}{{\sf d}(y,o)^{p_2}} \right).
\end{align*}
\item[\rm(2)] Suppose $o\in V$. Then there exists $C_1=C_1(o,\, p_1,U,V)>0$ such that for any $x\in U$ and $y\notin V$
\begin{align*}
\frac{1}{{\sf d}(x,y)^{p_1}}\geq C_1\left(1\land\frac{1}{{\sf d}(y,o)^{p_1}} \right).
\end{align*}
\end{enumerate}
\end{lem}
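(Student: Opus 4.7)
\smallskip
\noindent\textbf{Proof proposal.} The plan is to reduce both inequalities to routine triangle-inequality estimates governed by two geometric scales associated to the configuration $U\Subset V\Subset E$, namely
\[
R:=\sup_{x\in\overline{U}}{\sf d}(x,o) \qquad\text{and}\qquad r:={\sf d}(\overline{U},V^{c}).
\]
Both are finite and strictly positive: $R<\infty$ because $\overline{U}$ is compact and ${\sf d}(\cdot,o)$ continuous, while $r>0$ because $\overline{U}$ and $V^{c}$ are disjoint closed sets with $\overline{U}$ compact.

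For part (1), the key observation is that for $x\in U$ and $y\notin V$ one has simultaneously ${\sf d}(x,y)\ge r$ and, by the triangle inequality, ${\sf d}(y,o)\le{\sf d}(x,y)+R\le(1+R/r)\,{\sf d}(x,y)$, the last passage using ${\sf d}(x,y)\ge r$. Consequently
\[
\frac{1}{{\sf d}(x,y)^{p_{2}}}\le\min\!\left(\frac{1}{r^{p_{2}}},\ \frac{(1+R/r)^{p_{2}}}{{\sf d}(y,o)^{p_{2}}}\right),
\]
and a trivial case split on whether ${\sf d}(y,o)\ge 1$ converts this into the desired bound with $C_{2}:=\max\bigl(r^{-p_{2}},(1+R/r)^{p_{2}}\bigr)$.

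For part (2), I would invoke the triangle inequality in the opposite direction. The assumption $o\in V$ keeps $R$ under control (one actually only needs $o$ at finite distance from $\overline U$, but $o\in V$ gives it cleanly). For $x\in U$ and $y\notin V$ we have ${\sf d}(x,y)\le R+{\sf d}(o,y)\le(R+1)\max(1,{\sf d}(o,y))$, so raising to the $p_{1}$-th power and taking reciprocals yields the claim with $C_{1}:=(R+1)^{-p_{1}}$.

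The whole argument is essentially mechanical; the only mild subtlety is the bookkeeping that packages the two regimes ${\sf d}(y,o)\gtrless 1$ into a single $(1\wedge\,\cdot)$-truncated statement, and this is handled by the case split indicated above. I do not foresee any serious obstacle.
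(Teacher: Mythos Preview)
Your argument is correct and follows essentially the same triangle-inequality strategy as the paper: both proofs use the two scales $r={\sf d}(\overline U,V^c)$ and $R=\sup_{z\in\overline U}{\sf d}(z,o)$ to control ${\sf d}(x,y)$ from below and to compare ${\sf d}(x,y)$ with ${\sf d}(y,o)$. One small but worthwhile difference appears in part~(2). The paper invokes the hypothesis $o\in V$ to obtain the lower bound ${\sf d}(y,o)\ge{\sf d}(o,V^c)>0$ and then absorbs $\sup_{z\in U}{\sf d}(z,o)^{p_1}$ into a multiple of ${\sf d}(y,o)^{p_1}$; your elementary estimate $R+{\sf d}(o,y)\le(R+1)\max\bigl(1,{\sf d}(o,y)\bigr)$ bypasses this step entirely. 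As your parenthetical remark half-recognizes, your proof of~(2) therefore does not use $o\in V$ at all---$R<\infty$ follows from compactness of $\overline U$ alone---so you have in fact shown that this hypothesis is superfluous for the lemma as stated.
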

\begin{proof}[{\bf Proof.}]
(1): From
\begin{align*}
{\sf d}(y,o)^{p_2}&\leq({\sf d}(x,y)+{\sf d}(x,o))^{p_2}\\
&\leq(2^{p_2-1}\lor1)({\sf d}(x,y)^{p_2}+{\sf d}(x,o)^{p_2})\\
&\leq (2^{p_2-1}\lor1)\left({\sf d}(x,y)^{p_2}+\sup_{z\in U}{\sf d}(z,o)^{p_2}\frac{{\sf d}(x,y)^{p_2}}{{\sf d}(U,V^c)^{p_2}} \right)\\
&\leq (2^{p_2-1}\lor1)\left(1+\frac{\sup_{z\in U}{\sf d}(z,o)^{p_2}}{{\sf d}(U,V^c)^{p_2}} \right){\sf d}(x,y)^{p_2},
\end{align*}
we see 
\begin{align}
\frac{1}{{\sf d}(x,y)^{p_2}}\leq (2^{p_2-1}\lor1)\left(1+\frac{\sup_{z\in U}{\sf d}(z,o)^{p_2}}{{\sf d}(U,V^c)^{p_2}} \right)\frac{1}{{\sf d}(y,o)^{p_2}}.\label{eq:Eq1}
\end{align} 
On the other hand, since ${\sf d}(x,y)^{p_2}\geq{\sf d}(U,V^c)^{p_2}$, 
\begin{align}
\frac{1}{{\sf d}(x,y)^{p_2}}\leq\frac{1}{{\sf d}(U,V^c)^{p_2}}.\label{eq:Eq2}
\end{align}
Combining \eqref{eq:Eq1} and \eqref{eq:Eq2}, we have
\begin{align*}
\frac{1}{{\sf d}(x,y)^{p_2}}&\leq\left\{(2^{p_2-1}\lor1)\left(1+\frac{\sup_{z\in U}{\sf d}(z,o)^{p_2}}{{\sf d}(U,V^c)^{p_2}} \right)\lor \frac{1}{{\sf d}(U,V^c)^{p_2}}
 \right\}\left(1\land\frac{1}{{\sf d}(y,o)^{p_2}} \right)\\
 &\leq C_2\left(1\land\frac{1}{{\sf d}(y,o)^{p_2}} \right).
\end{align*} 
(2): From 
\begin{align*}
{\sf d}(x,y)^{p_1}&\leq (2^{p_1-1}\lor1)({\sf d}(y,o)^{p_1}+{\sf d}(x,o)^{p_1})\\
&\leq (2^{p_1-1}\lor1)\left({\sf d}(y,o)^{p_1}+\sup_{z\in U}{\sf d}(z,o)^{p_1}\frac{{\sf d}(y,o)^{p_1}}{
{\sf d}(o,V^c)^{p_1}} \right)\quad (\because o\in V)\\&=(2^{p_1-1}\lor1)\left(1+\frac{\sup_{z\in U}{\sf d}(z,o)^{p_1}}{{\sf d}(o,V^c)^{p_1}} \right){\sf d}(y,o)^{p_1},
\end{align*}
we see 
\begin{align*}
\frac{1}{{\sf d}(x,y)^{p_1}}&\geq \frac{1}{(2^{p_1-1}\lor1)\left(1+\frac{\sup_{z\in U}{\sf d}(z,o)^{p_1}}{{\sf d}(o,V^c)^{p_1}}  \right)}\cdot\frac{1}{{\sf d}(y,o)^{p_1}}\\
&\geq \frac{{\sf d}(V^c,o)^{p_1}}{(2^{p_1-1}\lor1)
({\sf d}(V^c,o)^{p_1}+\sup_{z\in U}{\sf d}(z,o)^{p_1})
}\left(1\land\frac{1}{{\sf d}(y,o)^{p_1}} \right)\\
&=C_1\left(1\land\frac{1}{{\sf d}(y,o)^{p_1}} \right).
\end{align*}
\end{proof}
\begin{lem}\label{lem:intgrabilityequivalence1}
Let 
${\sf d}$ be a distance function consistent with the given topology on 
 $E$.
Suppose that the jumping measure $J$ satisfies 
$$
\frac{D_1}{{\sf d}(x,y)^{p_1}}{\m}({\d} x)\m({\d} y)\leq
J({\d} x{\d} y)\leq \frac{D_2}{{\sf d}(x,y)^{p_2}}{\m}({\d} x){\m}({\d} y)
$$ 
for some $D_1,D_2>0$ and $0<p_1\leq p_2<\infty$. 
Assume $1\land\frac{1}{{\sf d}(\cdot,o)^{p_1}}\in L^1(E;{\m})$ 
for a fixed $o\in E$ and take $u\in L^1_{\loc}(E;{\m})$. Then the following are equivalent. 
\begin{enumerate}
\item[\rm(1)] $\left(1\land \frac{1}{{\sf d}(\cdot,o)^{p_1}} \right)|u|\in L^1(E;{\m})$.
\item[\rm(2)] For any $U\Subset V\Subset E$ with $o\in V$, 
\begin{align}
\int_{U\times V^c}|u(x)-u(y)|J({\d} x{\d} y)\label{eq:1finite}
<\infty.
\end{align} 
\end{enumerate}
Assume $1\land\frac{1}{{\sf d}(\cdot,o)^{p_1}}\in L^2(E;{\m})$ and 
take $u\in L^2_{\loc}(E;{\m})$. Then the following are equivalent. 
\begin{enumerate}
\item[\rm(3)] $\left(1\land \frac{1}{{\sf d}(\cdot,o)^{p_1}} \right)|u|^2\in L^1(E;{\m})$.
\item[\rm(4)] For any $U\Subset V\Subset E$ with $o\in V$, 
\begin{align}
\int_{U\times V^c}|u(x)-u(y)|^2J({\d} x{\d} y)\label{eq:2finite}
<\infty.
\end{align} 
\end{enumerate}
\end{lem}
\begin{cor}\label{cor:intgrabilityequivalence}
Let ${\sf d}$ be a distance function consistent with the given topology on 
 $E$.
Suppose that the jumping measure $J$ satisfies $\frac{D_1}{{\sf d}(x,y)^{p_1}}{\m}({\d} x){\m}({\d} y)\leq 
J({\d} x{\d} y)\leq \frac{D_2}{{\sf d}(x,y)^{p_2}}{\m}({\d} x){\m}({\d} y)$ for some $D_1,D_2>0$ and $0<p_1\leq p_2<\infty$.
Assume $1\land\frac{1}{{\sf d}(\cdot,o)^{p_1}}\in L^1(E;{\m})$ for a fixed $o\in E$. 
Then the following hold. 
\begin{enumerate}
\item[\rm(1)]
$u\in D(\mathscr{E})^{\diamond}_{\loc}$ implies 
$\left(1\land \frac{1}{{\sf d}(\cdot,o)^{p_1}} \right)|u|\in L^1(E;{\m})$.
\item[\rm(2)] Further suppose that 
$1\land\frac{1}{{\sf d}(\cdot,o)^{p_1}}\in L^1(E;{\m})$ holds for any $o\in E$. 
If $u\in D(\mathscr{E})_{\loc}$ and  
$\left(1\land \frac{1}{{\sf d}(\cdot,o)^{p_1}} \right)|u|\in L^1(E;{\m})$ holds for any $o\in E$, then $u\in D(\mathscr{E})^{\diamond}_{\loc}$.
\end{enumerate}
Assume $1\land\frac{1}{{\sf d}(\cdot,o)^{p_1}}\in L^2(E;{\m})$ 
for a fixed $o\in E$. 
Then the following hold. 
\begin{enumerate}
\item[\rm(3)]
$u\in D(\mathscr{E})^{\dag}_{\loc}$ implies 
$\left(1\land \frac{1}{{\sf d}(\cdot,o)^{p_1}} \right)|u|^2\in L^1(E;{\m})$.
\item[\rm(4)] For $u\in D(\mathscr{E})_{\loc}$, 
$\left(1\land \frac{1}{{\sf d}(\cdot,o)^{p_1}} \right)|u|^2\in L^1(E;{\m})$ implies $u\in D(\mathscr{E})^{\dag}_{\loc}$.
\end{enumerate}
\end{cor}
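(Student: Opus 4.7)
The plan is to reduce (1) and (3) directly to Lemma~\ref{lem:intgrabilityequivalence1} by specializing to a pair $U\Subset V$ with $o\in V$, and to deduce (2) and (4) from that lemma together with a ``splicing'' argument that handles the gap between such a special pair and a fully general one by exploiting the polynomial upper bound on $J$. For (1), any $u\in D(\mathscr{E})_{\loc}^{\diamond}$ lies in $D(\mathscr{E})_{\loc}\subset L^2_{\loc}(E;\m)\subset L^1_{\loc}(E;\m)$, and the defining condition of $\diamond$ applied to any $U\Subset V\Subset E$ with $o\in V$ gives $\int_{U\times V^c}|\tilde u(x)-\tilde u(y)|J(\d x\d y)<\infty$; Lemma~\ref{lem:intgrabilityequivalence1}, implication (2)$\Rightarrow$(1), then yields $(1\land{\sf d}(\cdot,o)^{-p_1})|u|\in L^1(E;\m)$. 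The proof of (3) is identical, using (4)$\Rightarrow$(3) of the same lemma in combination with $u\in D(\mathscr{E})_{\loc}^{\dag}\subset D(\mathscr{E})_{\loc}\subset L^2_{\loc}(E;\m)$.

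For (2), I would fix $u\in D(\mathscr{E})_{\loc}$ satisfying the weighted $L^1$ hypothesis. By Lemma~\ref{lem:intgrabilityequivalence1}(1)$\Rightarrow$(2) one already has $\int_{U\times V^c}|u(x)-u(y)|J(\d x\d y)<\infty$ whenever $o\in V$. To upgrade this to an arbitrary pair $U\Subset V\Subset E$, enlarge $V$ to a precompact open $V'$ with $V\cup\{o\}\Subset V'$ and split $V^c=V'^c\sqcup(V'\setminus V)$. The integral over $U\times V'^c$ is controlled by the special case. On $U\times(V'\setminus V)$ one has ${\sf d}(x,y)\geq{\sf d}(U,V^c)=:r>0$, so the upper bound $J(\d x\d y)\leq D_2\,{\sf d}(x,y)^{-p_2}\,\m(\d x)\m(\d y)$ gives
\begin{align*}
\int_{U\times(V'\setminus V)}|u(x)-u(y)|J(\d x\d y)\leq\frac{D_2}{r^{p_2}}\int_{U\times(V'\setminus V)}(|u(x)|+|u(y)|)\,\m(\d x)\m(\d y)<\infty,
\end{align*}
using the precompactness of $U,V'$ and $u\in L^1_{\loc}(E;\m)$. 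Hence $u\in D(\mathscr{E})_{\loc}^{\diamond}$.

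For (4) the analogous splitting is run, but one must additionally control the near-diagonal contribution demanded by the $\dag$-definition. Given $U\Subset E$, pick a precompact $V$ with $U\cup\{o\}\Subset V$ and a representative $u_V\in D(\mathscr{E})$ with $u=u_V$ $\m$-a.e.\ on $V$. Decompose $(U\times E)\setminus{\sf diag}$ as the disjoint union of $U\times V^c$, $U\times(V\setminus U)$ and $(U\times U)\setminus{\sf diag}$. The first piece is finite by Lemma~\ref{lem:intgrabilityequivalence1}(3)$\Rightarrow$(4) applied with the $L^2$-version of the weighted hypothesis. On the remaining two pieces $u=u_V$ $\m$-a.e., so they are jointly dominated by
\begin{align*}
\int_{V^2\setminus{\sf diag}}(u_V(x)-u_V(y))^2J(\d x\d y)\leq 2\mathscr{E}(u_V,u_V)<\infty
\end{align*}
via the Beurling--Deny decomposition, which gives $u\in D(\mathscr{E})_{\loc}^{\dag}$.

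The main obstacle is precisely this splicing step in (2) and (4): Lemma~\ref{lem:intgrabilityequivalence1} is phrased only for pairs with $o\in V$, and the extension to arbitrary $U\Subset V$ forces one to absorb the bounded ``collar'' $V'\setminus V$ (resp.\ $V\setminus U$) without losing integrability. The quantitative upper bound on $J$ is what makes the gap estimate work in (2), while for (4) it is the Beurling--Deny finiteness of $\mathscr{E}^{(j)}$ on the $D(\mathscr{E})$-representative $u_V$ that plays the same role.
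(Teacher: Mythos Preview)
Your argument is correct. The paper states this corollary without proof, treating it as an immediate consequence of Lemma~\ref{lem:intgrabilityequivalence1}, and your write-up faithfully supplies the missing details. In particular, you correctly handle the two points the paper glosses over: the ``$o\in V$'' restriction in condition (2) of Lemma~\ref{lem:intgrabilityequivalence1} (your splicing via an enlarged $V'\ni o$ together with the polynomial upper bound on $J$ and $u\in L^1_{\loc}$), and the near-diagonal piece in (4) (controlled through a local representative $u_V\in D(\mathscr{E})$ and $\mathscr{E}^{(j)}(u_V,u_V)\le\mathscr{E}(u_V,u_V)<\infty$).

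Two minor remarks. First, the splicing in (2) is not strictly necessary if one inspects the proof of (1)$\Rightarrow$(2) in Lemma~\ref{lem:intgrabilityequivalence1}: that direction uses only Lemma~\ref{lem:jumpkernelequivalence}(1), which does not require $o\in V$, so the conclusion already holds for arbitrary $U\Subset V$. Your splicing is nonetheless a clean way to argue from the lemma as stated. Second, in (4) the three-piece decomposition can be collapsed to two, since $U\times(V\setminus U)$ and $(U\times U)\setminus{\sf diag}$ together form $(U\times V)\setminus{\sf diag}\subset V^2\setminus{\sf diag}$; and the factor $2$ in your bound $2\mathscr{E}(u_V,u_V)$ is superfluous because $\mathscr{E}^{(j)}\le\mathscr{E}$ with the paper's normalization of $J$. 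Neither point affects correctness.
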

\begin{proof}[{\bf Proof.}]
It is easy to prove (1) and (3). First we prove (2). Suppose 
$u\in D(\mathscr{E})_{\loc}$. 
Take any $U\Subset V\Subset E$. Then there exists a point $o\in V$. 
For such $o\in V$, $1\land\frac{1}{{\sf d}(\cdot,o)^{p_1}}\in L^1(E;{\m})$ holds by assumption. Then  
$\left(1\land \frac{1}{{\sf d}(\cdot,o)^{p_1}} \right)|u|\in L^1(E;{\m})$ 
implies \eqref{eq:1finite} by Lemma~\ref{lem:intgrabilityequivalence1}. 
Next we prove (4). Suppose 
$u\in D(\mathscr{E})_{\loc}$. 
Assume that $1\land\frac{1}{{\sf d}(\cdot,o)^{p_1}}\in L^2(E;{\m})$ holds for a fixed $o\in E$. 
Take any $U\Subset V\Subset E$ with $o\in V$. 
Then $\left(1\land \frac{1}{{\sf d}(\cdot,o)^{p_1}} \right)|u|^2
\in L^1(E;{\m})$ 
implies \eqref{eq:2finite} by Lemma~\ref{lem:intgrabilityequivalence1}. 
When $o\notin V$, we take a relatively compact open set $W$ such that  $U\Subset V\Subset W\Subset E$ with $o\in W$. Then we have 
\begin{align*}
\int_{U\times W^c}|u(x)-u(y)|^2J({\d} x{\d} y)<\infty.
\end{align*}
From this, 
\begin{align*}
\int_{U\times V^c}|u(x)-u(y)|^2J({\d} x{\d} y)&=
\int_{U\times W^c}|u(x)-u(y)|^2J({\d} x{\d} y)\\
&\hspace{1cm}+
\int_{U\times (W\setminus V)}|u(x)-u(y)|^2J({\d} x{\d} y)\\
&=
\int_{U\times W^c}|u(x)-u(y)|^2J({\d} x{\d} y)\\
&\hspace{1cm}+
\int_{U\times (W\setminus V)}|u_W(x)-u_W(y)|^2J({\d} x{\d} y)<\infty.
\end{align*}
\end{proof}
\begin{proof}[\bf Proof of Lemma~\ref{lem:intgrabilityequivalence1}]
Since $0<p_1\leq p_2<\infty$, we see    
$1\land\frac{1}{{\sf d}(\cdot,o)^{p_1}}\in L^1(E;{\m})$  
(resp. $1\land\frac{1}{{\sf d}(\cdot,o)^{p_1}}\in L^2(E;{\m})$) 
implies $1\land\frac{1}{{\sf d}(\cdot,o)^{p_2}}\in L^1(E;{\m})$ 
(resp.~$1\land\frac{1}{{\sf d}(\cdot,o)^{p_2}}\in L^2(E;{\m})$). 

(1)$\Longrightarrow$(2): 
Assume $1\land\frac{1}{{\sf d}(\cdot,o)^{p_1}}\in L^1(E;{\m})$. 
Suppose (1). Take any 
$U\Subset V\Subset E$ with $o\in V$. 
By Lemma~\ref{lem:jumpkernelequivalence}(1), 
\begin{align*}
\int_{U\times V^c}&|u(x)-u(y)|J({\d} x{\d} y)
\\
&\leq D_2\int_U|u(x)|\int_{V^c}
\frac{{\m}({\d} y)}{{\sf d}(x,y)^{p_2}}{\m}({\d} x)
+D_2\int_U\int_{V^c}\frac{|u(y)|{\m}({\d} y)}{{\sf d}(x,y)^{p_2}}{\m}({\d} x)\\
&\leq D_2C_2\left[\int_U|u|{\d}{\m}\int_{V^c}\left(1\land\frac{1}{{\sf d}(y,o)^{p_2}} \right){\m}({\d} y)\right.
\\
&\hspace{2cm}
+\left.{\m}(U)
\int_{V^c}\left(1\land\frac{1}{{\sf d}(y,o)^{p_2}} \right)|u(y)|{\m}({\d} y)\right]\\
&\leq D_2C_2\left[\int_U|u|{\d}{\m}\int_{E}\left(1\land\frac{1}{{\sf d}(y,o)^{p_2}} \right){\m}({\d} y)\right.
\\
&\hspace{2cm}
+\left.{\m}(U)
\int_{E}\left(1\land\frac{1}{{\sf d}(y,o)^{p_2}} \right)|u(y)|{\m}({\d} y)\right]
<\infty.
\end{align*}
Then we have (2).

(2)$\Longrightarrow$(1): Suppose (2).
Take $U\Subset V\Subset E$ with $o\in V$. 
 By Lemma~\ref{lem:jumpkernelequivalence}(1)(2), 
 \begin{align*}
 D_1C_1{\m}(U)\int_{V^c}&\left(1\land \frac{1}{{\sf d}(y,o)^{p_1}} \right)|u(y)|{\m}({\d} y)\\&=D_1C_1\int_U{\m}({\d} x)\int_{V^c}\left(1\land \frac{1}{{\sf d}(y,o)^{p_1}} \right)|u(y)|{\m}({\d} y)\\
 &\leq D_1\int_{U\times V^c}\frac{|u(y)|}{{\sf d}(x,y)^{p_1}}{\m}({\d} x)
 {\m}({\d} y)\leq\int_{U\times V^c}|u(y)|J({\d} x{\d} y)\\
 &\leq \int_{U\times V^c}|u(x)|J({\d} x{\d} y)+\int_{U\times V^c}|u(y)-u(x)|J({\d} x{\d} y)\\
 &\leq D_2C_2\int_{U}|u(x)|{\m}({\d} x)\int_{V^c}\left(1\land \frac{1}{{\sf d}(y,o)^{p_2}} \right){\m}({\d} y)\\
 &\hspace{1cm}+\int_{U\times V^c}|u(y)-u(x)|J({\d} x{\d} y)
 \\
 &\leq D_2C_2\int_{U}|u(x)|{\m}({\d} x)\int_E\left(1\land \frac{1}{{\sf d}(y,o)^{p_2}} \right){\m}({\d} y)\\
 &\hspace{1cm}+\int_{U\times V^c}|u(y)-u(x)|J({\d} x{\d} y)
  <\infty.
 \end{align*}
 On the other hand, 
 \begin{align*}
 \int_V\left(1\land \frac{1}{{\sf d}(y,o)^{p_1}} \right)|u(y)|{\m}({\d} y)\leq\int_V|u(y)|{\m}({\d} y)<\infty.
 \end{align*}
 Therefore we have (1). The equivalence (3)$\Longleftrightarrow$(4) under 
 $1\land\frac{1}{{\sf d}(\cdot,o)^{p_1}}\in L^2(E;{\m})$
 can be 
 similarly proved. We omit it. 
\end{proof}
The following lemma can be similarly proved by use of Lemma~\ref{lem:jumpkernelequivalence}. We omit its proof. 
\begin{lem}\label{lem:intgrabilityequivalence2}
Let ${\sf d}$ be a distance function consistent with the given topology on 
 $E$.
Suppose that the jumping measure $J$ satisfies 
$\frac{D_1}{{\sf d}(x,y)^{p_1}}{\m}({\d} x){\m}({\d} y)\leq
J({\d} x{\d} y)\leq \frac{D_2}{{\sf d}(x,y)^{p_2}}{\m}({\d} x){\m}({\d} y)$ for some $D_1,D_2>0$ and $0<p_1\leq p_2<\infty$. 
Fix a point $o\in E$. Take $u\in L^1_{\loc}(E;{\m})$. Then the following are equivalent. 
\begin{enumerate}
\item[\rm(1)] $\left(1\land \frac{1}{{\sf d}(\cdot,o)^{p_1}} \right)|u|\in L^1(E;{\m})$.
\item[\rm(2)] For any $U\Subset V\Subset E$ with $o\in V$, 
\begin{align*}
\int_{U\times V^c}|u(y)|J({\d} x{\d} y)
<\infty.
\end{align*} 
\end{enumerate}
Take $u\in L^2_{\loc}(E;{\m})$. Then the following are equivalent. 
\begin{enumerate}
\item[\rm(3)] $\left(1\land \frac{1}{{\sf d}(\cdot,o)^{p_1}} \right)|u|^2\in L^1(E;{\m})$.
\item[\rm(4)] For any $U\Subset V\Subset E$  with $o\in V$, 
\begin{align*}
\int_{U\times V^c}|u(y)|^2J({\d} x{\d} y)
<\infty.
\end{align*} 
\end{enumerate}
\end{lem}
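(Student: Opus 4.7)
The plan is to follow the template of the proof of Lemma~\ref{lem:intgrabilityequivalence1} \emph{mutatis mutandis}, with one structural simplification: the integrands on the right-hand side of (2) and (4) involve only $|u(y)|$ (resp.~$|u(y)|^2$) rather than the difference $|u(x)-u(y)|$ (resp.~its square). Consequently no triangle inequality or Minkowski step is needed, and the two estimates of Lemma~\ref{lem:jumpkernelequivalence} can be applied in a single direct step.

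For (1)$\Longrightarrow$(2), I would fix $U\Subset V\Subset E$ with $o\in V$, bound $J$ above by $D_2\,{\sf d}(x,y)^{-p_2}\,\m(\d x)\m(\d y)$, and apply Lemma~\ref{lem:jumpkernelequivalence}(1) to obtain
\begin{align*}
\int_{U\times V^c}|u(y)|J(\d x\d y)\leq D_2C_2\,\m(U)\int_{V^c}\left(1\land\frac{1}{{\sf d}(y,o)^{p_2}}\right)|u(y)|\m(\d y).
\end{align*}
Because $p_1\leq p_2$ forces $1\land{\sf d}(y,o)^{-p_2}\leq 1\land{\sf d}(y,o)^{-p_1}$ pointwise, the right-hand side is bounded by $\int_E(1\land{\sf d}(y,o)^{-p_1})|u(y)|\m(\d y)<\infty$ by (1). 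Observe that no auxiliary global integrability hypothesis on $1\land{\sf d}(\cdot,o)^{-p_1}$ is required here — in contrast with Lemma~\ref{lem:intgrabilityequivalence1} — precisely because the weight is now multiplied by $|u|$.

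For (2)$\Longrightarrow$(1), I would choose any single pair $U\Subset V\Subset E$ with $o\in V$ and $\m(U)>0$ (consistency of the topology guarantees such a pair exists), use the lower bound on $J$ together with Lemma~\ref{lem:jumpkernelequivalence}(2) to get
\begin{align*}
D_1C_1\,\m(U)\int_{V^c}\left(1\land\frac{1}{{\sf d}(y,o)^{p_1}}\right)|u(y)|\m(\d y)\leq\int_{U\times V^c}|u(y)|J(\d x\d y)<\infty,
\end{align*}
and handle the remaining part by the trivial estimate $\int_V(1\land{\sf d}(y,o)^{-p_1})|u(y)|\m(\d y)\leq\int_V|u(y)|\m(\d y)<\infty$, which is finite since $V$ is relatively compact and $u\in L^1_{\loc}(E;\m)$. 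Adding the two contributions delivers (1).

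The equivalence (3)$\Longleftrightarrow$(4) follows by running exactly the same two computations with $|u|^2$ in place of $|u|$ and $L^2_{\loc}$ in place of $L^1_{\loc}$; no new ingredients appear. The only matter that demands careful bookkeeping — and which I flag as the sole (admittedly minor) obstacle — is pairing the asymmetric two-sided bound on $J$ with the correct part of Lemma~\ref{lem:jumpkernelequivalence}: the upper bound (involving $p_2$) must be coupled with part~(1) of that lemma, and the lower bound (involving $p_1$) with part~(2), so that all constants remain finite and the direction of the inequality $p_1\leq p_2$ is exploited consistently at each step.
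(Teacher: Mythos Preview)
Your proposal is correct and follows precisely the route the paper indicates (the paper omits the proof, saying only that it ``can be similarly proved by use of Lemma~\ref{lem:jumpkernelequivalence}''). Your observation that the auxiliary hypothesis $1\land{\sf d}(\cdot,o)^{-p_1}\in L^1(E;\m)$ from Lemma~\ref{lem:intgrabilityequivalence1} is no longer needed here---because the integrand involves only $|u(y)|$ rather than $|u(x)-u(y)|$---is exactly the structural simplification that distinguishes the two lemmas and is consistent with the statement as written.
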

\section{
Main Results}\label{sec:subharmonicity}
A set $B\subset E_{\partial}:=E\cup\{\partial\}$ is called {\it nearly Borel measurable} if for each probability measure $\mu$ on $E_{\partial}$ there exist Borel sets $B_1,B_2\in\mathscr{B}(E_{\partial})$ such that 
$B_1\subset B\subset B_2$ and $\mathbb{P}_{\mu}(X_t\in B_2\setminus B_1\text{ for some }t\geq0)=1$. It is easy to see that nearly Borel measurable sets form a $\sigma$-algebra $\mathscr{B}^n(E_{\partial})$ over $E_{\partial}$. We set   
$\mathscr{B}^n(E):=\{B(\subset E)\mid B\in \mathscr{B}^n(E_{\partial})\}$.
A subset $B$ of $E_{\partial}$ is said to be {\it ${\bf X}$-invariant} 
if $B$ is nearly Borel and
\begin{align*}
\mathbb{P}_x(X_t\in B_{\partial},X_{t-}\in B_{\partial}\text{ for all }t\geq0)=1\quad\text{ for any }x\in B.
\end{align*}
Denote by $\mathscr{B}^{\nu}(E)$ the $\nu$-completion of $\mathscr{B}(E)$ with respect to $\nu\in\mathscr{P}(E)$, where $\mathscr{P}(E)$ is the family of all probability measures on $E$. A set $B$ 
is called {\it universally measurable} if $B\in\mathscr{B}^*(E):=\bigcap_{\nu\in\mathscr{P}(E)}\mathscr{B}^{\nu}(E)$. 
It is known that $\mathscr{B}^*(E)$ forms a $\sigma$-algebra and any nearly Borel subset of $E$ is universally measurable.  
A subset $N$ of $E$ is called {\it properly exceptional} if 
$N\in\mathscr{B}^n(E)$, ${\m}(N)=0$ and $E\setminus N$ is ${\bf X}$-invariant.  
It is known that for $B\in\mathscr{B}^n(E_{\partial})$, the hitting time $\sigma_B:=\inf\{t>0\mid X_t\in B\}$ is an $(\mathscr{F}_t)_{t\geq0}$-stopping time and $\mathbb{P}_x(\sigma_B=0)=0$ or $1$ in view of the Blumenthal's $0$-$1$ law. We set $B^r:=\{x\in E\mid \mathbb{P}_x(\sigma_B=0)=0\}$ and each point $x\in B^r$ is called a {\it regular point of $B$}. 
A set $A$ is called {\it finely open} if for each $x\in A$ there exists a set 
$B=B(x)\in\mathscr{B}^n(E)$ such that $E\setminus A\subset B$ and 
$x\notin B^r$.
The totality of finely open sets satisfies the Hausdorff axioms of topology and is called the {\it fine topology}, which is finer than the given topology on $E$, because any open set is finely open in view of the right continuity of the sample paths. 
A nearly Borel subset $N$ of $E$ is called {\it ${\m}$-polar} if $\mathbb{P}_{\m}(\sigma_N<\infty)=0$ and any subset $N$ of $E$ is called {\it exceptional} if there exists an ${\m}$-polar set $\tilde{N}$ containing $N$ (see \cite[Chapter IV, Definition~5.17]{MR}). 
For a statement $P(x)$ with respect to $x\in E$, it is called that $P(x)$ holds q.e.~$x\in E$ if $\{x\in E\mid P(x)\text{ fails }\}$ is an exceptional. 
Any properly exceptional set $N$ is exceptional. 
It is shown in \cite[Chapter IV, Theorem~5.29(i)]{MR} that $\mathscr{E}$-exceptionality is 
equivalent to the exceptionality, i.e., \lq\lq$\mathscr{E}$-q.e.\rq\rq\, is 
equivalent to \lq\lq q.e.\rq\rq.  Let $D$ be an open set.           
A function $u$ defined on $\mathscr{E}$-q.e.~on $D$ is said to be {\it q.e.~finely continuous on $D$} if there exists a nearly Borel exceptional set 
$N$ such that $D\setminus N$ is finely open and $u$ is nearly Borel and finely continuous on $D\setminus N$.   
To characterize $\mathscr{E}$-subharmonic functions in $D(\mathscr{E})_{\loc}^{\diamond}$ without assuming local boundedness in terms of {\bf X}, we introduce 
the following stochastic subharmonicity: 
\begin{defn}[{{\bf Sub/Super-harmonicity}}]
\label{df:stochasticsubharmonicity}
{\rm Let $D$ be an open set in $E$. We say that a nearly Borel
$\overline{\R}$-valued function $u$ defined on $E$ is
\emph{subharmonic {\rm(}resp.~superharmonic{\rm)} in $D$} if 
for any open subset $U$ of $D$ with $U\Subset D$, $t\mapsto u(X_{t\wedge\tau_U})$ is a uniformly integrable right continuous $\mathbb{P}_x$-submartingale (resp.~$\mathbb{P}_x$-supermartingale) for q.e.~$x\in E$. A nearly Borel $\overline{\R}$-valued function $u$ on $E$ is said to be
\emph{harmonic in $D$} if $u$ is  both superharmonic and subharmonic in $D$.
If $t\mapsto u(X_{t\wedge\tau_U})$ is a uniformly integrable right
continuous $\mathbb{P}_x$-submartingale for all $x\in E$, $u$ is called \emph{subharmonic in $D$ without exceptional set}. 
The \emph{subharmonicity/harmonicity in $D$ without exceptional set} is
analogously defined.
}
\end{defn}

\begin{defn}[Sub/Super-harmonicity in the weak sense]
{\rm Let $D$ be an open set in $E$. We say that a nearly Borel
$\overline{\R}$-valued function $u$ defined on $E$ is \emph{subharmonic {\rm(}resp.~superharmonic{\rm)} in $D$ in the weak sense} if $u$ is q.e.~finely continuous on $D$ and for any relatively compact open subset $U$ with $U\Subset D$, $\mathbb{E}_x[|{u}|(X_{\tau_U})]<\infty$ for q.e.~$x\in E$ and ${u}(x)\leq\mathbb{E}_x[{u}(X_{\tau_U})]$ (resp.~${u}(x)\geq\mathbb{E}_x[{u}(X_{\tau_U})]$) holds for q.e.~$x\in E$.
A nearly Borel measurable $\overline{\R}$-valued function $u$ on $E$ is said to be \emph{harmonic in $D$ in the weak sense} if $u$ is both superharmonic and subharmonic in $D$ in the weak sense.
If $u$ is finely continuous (nearly) Borel $\overline{\R}$-valued function on $D$ and for any relatively compact open subset $U$ with $U\Subset D$, $\mathbb{E}_x[|{u}|(X_{\tau_U})]<\infty$ for all $x\in E$, and ${u}(x)\leq\mathbb{E}_x[{u}(X_{\tau_U})]$ holds  for all $x\in E$, then $u$ is called \emph{subharmonic in $D$ in the weak sense without exceptional set}.
The \emph{subharmonicity/harmonicity in $D$ in the weak sense without exceptional set} is analogously defined.
}
\end{defn}  

It is easy to see that for subharmonic functions $u_1,u_2$ on $D$ (resp. subharmonic functions $u_1,u_2$ in $D$ in the weak sense), 
$u_1\lor u_2$ is subharmonic in $D$ (resp. subharmonic in $D$ in the weak sense).

\bigskip

We introduce the following condition:
\begin{align}
\left.\begin{array}{cc}\text{For any relatively compact open set $U$ with}& U\Subset D\text{ and } D\setminus \overline{U}\ne\emptyset, \\
\hspace{-2.2cm}
\mathbb{P}_x(\tau_U<\infty)>0\text{ for q.e.~}x\in U. & \end{array}\right.\label{e:2.3}
\end{align}
 Condition
\eqref{e:2.3} is satisfied if 
$(\mathscr{E}_D,D(\mathscr{E}_D))$ is
\emph{irreducible}, that is, any $(P_t^D)$-invariant subset $B$ of $D$ is
trivial in the sense that ${\m}(B)=0$ or ${\m}(D\setminus B)=0$. 
Indeed, set $G:=D\setminus\overline{U}\subset E\setminus U$. Since ${\m}(G)>0$, $G$ is non-exceptional. 
Applying \cite[Theorem~3.5.6(i)]{CFbook} to ${\bf X}_D$, we have 
\begin{align*}
\mathbb{P}_x(\sigma_G<\tau_D)>0\quad\text{ q.e.} \quad x\in D. 
\end{align*}
Since $\tau_U=\sigma_{E\setminus U}\land\zeta\leq\sigma_G$, 
we obtain
\begin{align*}
\mathbb{P}_x(\tau_U<\infty)\geq
\mathbb{P}_x(\tau_U<\tau_D)
\geq \mathbb{P}_x(\sigma_G<\tau_D)>0 \quad\text{ q.e.} \quad x\in D.
\end{align*}
Note that ${\bf X}_U$ is transient under \eqref{e:2.3}. 
Indeed, let $U=U^{(c)}+U^{(d)}+N$ be an ergodic decomposition of 
${\bf X}_U$ (see \cite{Kw:ergodic} for ergodic decomposition). If ${\m}(U^{(c)})>0$, then 
\begin{align*}
\mathbb{P}_x(\tau_U<\infty)\leq\mathbb{P}_x(\tau_{U^{(c)}}<\infty)=0\quad\text{ for \ \ q.e.}\quad x\in U^{(c)},
\end{align*}
which contradicts \eqref{e:2.3}, hence ${\m}(U^{(c)})=0$, i.e., ${\bf X}_U$ is transient. Moreover, we introduce the following condition: 
For an open set $D$, we consider the following condition for a (nearly) Borel $\overline{\R}$-valued function $u\in D(\mathscr{E})_{e,\loc}$ 
on $E$: 
\begin{align}
\left\{\begin{array}{rl}\1_U\mathbb{E}_{\cdot}[|\tilde{u}-\tilde{u}_V|(X_{\tau_U})]\in D(\mathscr{E}_U)_e&\text{\!\!for any open sets $U, V$ with $U\Subset V\Subset D$} \\\text{and some\!\!\!}&\text{$u_V\in D(\mathscr{E})_e$ satisfying 
$u=u_V$ ${\m}$-a.e.~on $V$.}\end{array}\right.
\label{eq:2.3}
\end{align}

The condition \eqref{eq:2.3} is weaker than  
the condition as in \cite[(2.5)]{Chen} or \cite[(2.3)]{CK}. 
Indeed, we have the following:

\begin{prop}\label{prop:Sufficient}
Take $u\in D(\mathscr{E})_{e,\loc}$ and fix an open set $D$. Suppose one of the following holds: 
\begin{enumerate}
\item[{\rm(1)}] ${\bf X}$ is a diffusion process. 
\item[{\rm(2)}] $u\in L^{\infty}(D;{\m})$.
\item[{\rm(3)}] $u\in D(\mathscr{E})_e$. 
\item[{\rm(4)}] $u\in L^{\infty}_{\loc}(D;{\m})$ and 
for any relatively compact 
open sets $U,V$ with $U\Subset V\Subset D$,
there exists $\phi_V\in\mathscr{C}_D$ such that $0\leq\phi_V\leq1$ on $D$, $\phi_V=1$ on $V$ and 
$\1_U\mathbb{E}_{\cdot}[(1-\phi_V)|\tilde{u}|(X_{\tau_U})]\in D(\mathscr{E}_U)_e$. 
\item[{\rm (5)}] $u\in D(\mathscr{E})_{\loc}$, there exists an intrinsic metric ${\sf d}$ in the sense of 
\cite[Definition~4.1]{FLW} such that all open balls with respect to ${\sf d}$ are relatively compact, and 
for any relatively compact open sets $U,V$ with $U\Subset V\Subset D$,
there exists ${\sf d}$-Lipschitz function $\phi_V$ with 
compact support in $D$ such that  $0\leq\phi_V\leq1$ on $D$, $\phi_V=1$ on $V$ and 
$\mathbb{E}_{\cdot}[(1-\phi_V)|\tilde{u}|(X_{\tau_U})]\in D(\mathscr{E}_U)_e$. 
\end{enumerate}
Then \eqref{eq:2.3} holds for $u$. 
\end{prop}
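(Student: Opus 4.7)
The plan is, for each of the five cases, to exhibit an explicit $u_V \in D(\mathscr{E})_e$ with $u = u_V$ $\m$-a.e.~on $V$ and then to verify that the function $\1_U \mathbb{E}_{\cdot}[|\tilde{u} - \tilde{u}_V|(X_{\tau_U})]$ lies in $D(\mathscr{E}_U)_e$. Cases (1) and (3) are immediate. In the diffusion case (1), the sample paths exit $U$ only through $\partial U \subset V$, so $X_{\tau_U} \in V$ $\mathbb{P}_x$-a.s.~for $\mathscr{E}$-q.e.~$x$; since the quasi-continuous versions $\tilde{u}$ and any admissible $\tilde{u}_V$ coincide $\mathscr{E}$-q.e.~on $V$, the integrand vanishes q.s.~and the expectation is identically zero. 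In case (3), take $u_V := u \in D(\mathscr{E})_e$, so that $|\tilde{u} - \tilde{u}_V| \equiv 0$.

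For cases (2), (4), and (5), the strategy is uniform: fix a cutoff $\phi_V$ with $0 \leq \phi_V \leq 1$, $\phi_V \equiv 1$ on $V$, and $\mathrm{supp}\,\phi_V \Subset D$, and set $u_V := u\phi_V$. Using $u \in D(\mathscr{E})_{e,\loc}$ together with the local boundedness of $u$ available in each case, one verifies $u_V \in D(\mathscr{E})_e$ with $\tilde{u}_V = \tilde{u}\phi_V$ $\mathscr{E}$-q.e. Consequently $|\tilde{u} - \tilde{u}_V| = (1 - \phi_V)|\tilde{u}|$ q.e., and \eqref{eq:2.3} reduces to showing $\1_U \mathbb{E}_{\cdot}[(1 - \phi_V)|\tilde{u}|(X_{\tau_U})] \in D(\mathscr{E}_U)_e$. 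For (4), one chooses $\phi_V \in \mathscr{C}$ and the required membership is exactly the hypothesis. For (5), one uses an intrinsic-metric Lipschitz cutoff, whose compact support inside $D$ is ensured by the relative compactness of intrinsic balls, and invokes the hypothesis analogously. Case (2) is reduced to (4): the global boundedness of $u$ on $D$ makes $(1 - \phi_V)|\tilde{u}|$ bounded on $D$, and combined with the finiteness of $J(U \times D^c)$ for $U \Subset V \Subset D$ recorded in \cite[Remark~2.4]{CK}, an argument of the same flavor as in the proof of Lemma~\ref{lem:weldefinedness} places the expectation in $D(\mathscr{E}_U)_e$.

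The step I expect to be the main obstacle is verifying $u\phi_V \in D(\mathscr{E})_e$ with $\tilde{u}_V = \tilde{u}\phi_V$ $\mathscr{E}$-q.e.~from purely local information on $u$. This requires Leibniz-type bounds on $\mathscr{E}(u\phi_V, u\phi_V)$ against the Beurling--Deny decomposition, split into diffusion, jump, and killing contributions. The compact support of $\phi_V$ combined with its membership in $\mathscr{C}$ (respectively its Lipschitz regularity in case (5)) and the local boundedness of $u$ on a neighborhood of $\mathrm{supp}\,\phi_V$---precisely what (2), (4), and (5) supply---are essential for controlling the jump energy of the product, which is the delicate term in the non-local setting.
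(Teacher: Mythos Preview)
Your treatment of cases (1), (3), and (4) matches the paper's proof. The genuine gap is in case (5). You write that ``the local boundedness of $u$ on a neighborhood of $\mathrm{supp}\,\phi_V$---precisely what (2), (4), and (5) supply---are essential'' for controlling the jump energy of $u\phi_V$. But hypothesis (5) does \emph{not} assume $u\in L^{\infty}_{\loc}(D;\m)$; it only assumes $u\in D(\mathscr{E})_{\loc}$, which gives $u\in L^2_{\loc}$ and nothing more. The whole point of (5), as distinguished from (4), is that local boundedness is replaced by the intrinsic-metric structure. The paper's argument is that for a ${\sf d}$-Lipschitz $\phi$ one has, by \cite[Theorem~4.9]{FLW}, the energy-measure bound $\mu^{c}_{\langle\phi\rangle}+\mu^{j}_{\langle\phi\rangle}\leq(\mathrm{Lip}(\phi))^2\,\m$. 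Feeding this into the Leibniz-type inequality for $\mu_{\langle\phi u\rangle}$ turns the dangerous cross term $\int \tilde u^2\,\d\mu_{\langle\phi\rangle}$ into $(\mathrm{Lip}(\phi))^2\int_{\mathrm{supp}\,\phi} u^2\,\d\m$, which is finite because $u\in L^2_{\loc}$ and $\mathrm{supp}\,\phi$ is compact. No $L^{\infty}$ information on $u$ enters. Your proposal, by invoking local boundedness that is not there, misses exactly this mechanism; the Leibniz estimate you sketch would not close in case (5) as you have framed it.

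For case (2) your route also diverges from the paper's. Rather than reducing to (4), the paper simply truncates: since $u\in L^{\infty}(D;\m)$ one may take $u_V\in D(\mathscr{E})_e\cap L^{\infty}(E;\m)$ with $\|u_V\|_{\infty}\leq\|u\|_{L^{\infty}(D)}$ (truncate any admissible $u_V$ at this level), so that $u-u_V$ is bounded, and then invokes the argument of \cite[Lemma~2.3]{Chen} directly. Your reduction to (4) still leaves you to verify that $\1_U\mathbb{E}_{\cdot}[(1-\phi_V)|\tilde u|(X_{\tau_U})]\in D(\mathscr{E}_U)_e$, and your appeal to ``an argument of the same flavor as in the proof of Lemma~\ref{lem:weldefinedness}'' is off target: that lemma concerns well-definedness of bilinear pairings, not membership of hitting expectations in the extended part space. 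The relevant input is the excessiveness of this function with respect to ${\bf X}_U$ together with a domination or uniform-bound argument as in \cite[Lemma~2.3]{Chen}.
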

\begin{proof}[{\bf Proof.}]
Under the condition (1), we see $X_{\tau_U}\in\partial U\cup \{\partial\}$ $\mathbb{P}_x$-a.s., for $x\in U$ which means 
$|\tilde{u}-\tilde{u}_V|(X_{\tau_U})=0$ $\mathbb{P}_x$-a.s. 
So \eqref{eq:2.3} trivially holds.
Under the condition (2), we can take $u_V\in D(\mathscr{E})_e\cap L^{\infty}(E;{\m})$ with $\|u_V\|_{\infty}\leq\|u\|_{\infty}$, hence 
$u-u_V$ is bounded. Then we obtain $\1_U\mathbb{E}_{\cdot}[|\tilde{u}-\tilde{u}_V|(X_{\tau_U})]\in D(\mathscr{E}_U)_e$ in the same way of the proof of \cite[Lemma~2.3]{Chen}. Under the condition  (3), we can take $u_V=u\in D(\mathscr{E})_e$, then 
\eqref{eq:2.3} trivially holds. Under the condition (4), thanks to the boundedness of $u$ on $V$, $u_V:=\phi_Vu\in D(\mathscr{E}_D)$ does the job. Under the condition (5), for any ${\sf d}$-Lipschitz function $\phi$ with compact support, $\phi u\in D(\mathscr{E})$ without assuming the local boundedness of $u$. Indeed, 
by \cite[Theorem~4.9]{FLW}, we have 
$\mu^{c}_{\langle \phi u\rangle}\leq 2\phi^2\mu^{c}_{\langle  u\rangle}+2u^2\mu^{c}_{\langle \phi\rangle}\leq
2\phi^2\mu^{c}_{\langle  u\rangle}+2({\rm Lip}(\phi))^2u^2{\m}$ and   
$\mu^{j}_{\langle \phi u\rangle}({\d} x)\leq 2 \tilde{u}(x)^2
\mu^{j}_{\langle \phi\rangle}({\d} x)
+2\phi(x)^2\mu^{j}_{\langle u\rangle}({\d} x)\leq
2({\rm Lip}(\phi))^2\1_{{\rm supp}[\phi]}(x)u(x)^2{\m}({\d} x)+2\phi(x)^2\mu^{j}_{\langle u\rangle}({\d} x)$. Moreover, $\mu^{\kappa}_{\langle \phi u\rangle}({\d} x)\leq\|\phi\|_{\infty}^2 \mu^{\kappa}_{\langle u\rangle}({\d} x)$. Then we obtain $\phi u\in D(\mathscr{E})$. 
Applying this to the ${\sf d}$-Lipschitz function $\phi_V$ specified in 
(5), $u_V:=\phi_Vu\in  D(\mathscr{E})$ does the job. 
\end{proof}

Now we give another sufficient condition to \eqref{eq:2.3}. 
\begin{prop}\label{prop:sufficient{eq:2.3}}
Let ${\sf d}$ be a distance function consistent with the given topology on 
 $E$.
Suppose that the L\'evy system $(N,H)$ satisfies  
$N(x,{\d} y)\leq D_2\frac{{\m}({\d} y)}{{\sf d}(x,y)^p}$ for some $D_2>0$ and  $p>0$ and $H_t=t$. 
Let $u$ be a Borel function satisfying $\left(1\land \frac{1}{{\sf d}(\cdot,o)^p} \right)|u|^q\in L^1(E;{\m})$ with $q\in]0,+\infty[$ and any point $o\in E$.  
Suppose that $\sup_{x\in U}\E_x[\tau_U]<\infty$ for any $U\Subset D$. 
Then, for any $U\Subset V\Subset D$ with $o\in V$, 
\begin{align}
\sup_{x\in U}\E_x[\1_{V^c}|u|^q(X_{\tau_U})]<\infty.\label{eq:1}
\end{align}
From this, for a nearly Borel function $u\in D(\mathscr{E})_{\loc}$ 
satisfying $\left(1\land \frac{1}{{\sf d}(\cdot,o)^p} \right)|u|\in L^1(E;{\m})$ {\rm(}resp.~$\left(1\land \frac{1}{{\sf d}(\cdot,o)^p} \right)|u|^2\in L^1(E;{\m})${\rm)}, we have 
\begin{align}
\sup_{x\in U}\E_x[|u-u_V|(X_{\tau_U})]<\infty\quad 
\text{{\rm(}resp.~$\sup_{x\in U}\E_x[|u-u_V|^2(X_{\tau_U})]<\infty${\rm)}}\label{eq:Finiteness}
\end{align}
for any $U\Subset V\Subset E$ and $u_V\in D(\mathscr{E})$ satisfying $u=u_V$ ${\m}$-a.e.~on $V$, in particular, \eqref{eq:2.3} for $u\in D(\mathscr{E})^{\diamond}_{\loc}$ holds, 
provided $\sup_{x\in U}\E_x[\tau_U]<\infty$ for any $U\Subset D$.
\end{prop}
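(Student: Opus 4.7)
The plan is to establish \eqref{eq:1} first via the L\'evy system formula, then derive \eqref{eq:Finiteness} by a comparison using that $|u - u_V|$ vanishes on $V$, and finally verify the membership condition \eqref{eq:2.3} from these pointwise bounds.

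For \eqref{eq:1}, I observe that since $U \Subset V$, we have $\partial U \subset V$; thus any exit point $X_{\tau_U}$ landing in $V^c$ must be reached by a pure jump (a continuous exit of the Hunt process is confined to $\partial U$). The L\'evy system formula with $H_t = t$ then gives an identity of the form
\[
\E_x\!\left[|u|^q(X_{\tau_U}) \1_{V^c}(X_{\tau_U})\right] = \E_x\!\left[\int_0^{\tau_U}\!\! \int_{V^c} |u(y)|^q N(X_s, \d y)\, \d s\right].
\]
Inserting the bound $N(x, \d y) \leq D_2\, \m(\d y)/{\sf d}(x,y)^p$ and invoking Lemma~\ref{lem:jumpkernelequivalence}(1) (for $X_s \in U$ and $y \in V^c$, converting $1/{\sf d}(X_s, y)^p$ into $C_2\bigl(1 \land 1/{\sf d}(y, o)^p\bigr)$), the double integral factorises into $\E_x[\tau_U] \cdot \int_{V^c} |u|^q \bigl(1 \land {\sf d}(\cdot, o)^{-p}\bigr)\, \d\m$. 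Both factors are uniformly bounded in $x \in U$ by hypothesis, yielding \eqref{eq:1}.

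For \eqref{eq:Finiteness}, since $u = u_V$ $\m$-a.e.\ on $V$, the $\mathscr{E}$-quasi-continuous versions satisfy $|\tilde u - \tilde u_V| = 0$ q.e.\ on $V$. Combining this with the absolute continuity of $J$ with respect to $\m$ (and the fact that continuous exits land in $\partial U \subset V$), I can replace $|\tilde u - \tilde u_V|(X_{\tau_U})$ with its restriction to $V^c$, then bound it by $(|u| + |u_V|)(X_{\tau_U}) \1_{V^c}(X_{\tau_U})$. The $|u|$-piece is handled by \eqref{eq:1} with $q = 1$, using the assumed integrability of $\bigl(1 \land {\sf d}(\cdot, o)^{-p}\bigr)|u|$. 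The $|u_V|$-piece is handled by \eqref{eq:1} applied to $u_V$ with $q = 2$ (since $u_V \in L^2(E;\m)$ forces $\bigl(1 \land {\sf d}(\cdot, o)^{-p}\bigr)|u_V|^2 \in L^1(E;\m)$), followed by Cauchy--Schwarz against $\mathbb{P}_x(X_{\tau_U} \in V^c) \leq 1$. The $L^2$ variant of \eqref{eq:Finiteness} proceeds identically via $|u - u_V|^2 \leq 2(|u|^2 + |u_V|^2)$ on $V^c$.

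The final membership claim \eqref{eq:2.3} follows because, by Step 2, $f(x) := \1_U(x) \E_x[|\tilde u - \tilde u_V|(X_{\tau_U})]$ is bounded on the relatively compact $U$ and identically zero off $U$; using the Riesz-type decomposition $u_V = H_U u_V + (u_V - H_U u_V)$ with $u_V - H_U u_V \in D(\mathscr{E}_U)$ and controlling the $V^c$-jump contribution via the uniform bound of Step 1, one verifies that the hitting expectation defining $f$ belongs to $D(\mathscr{E}_U)_e$. The principal obstacle will be the rigorous execution of Step 1: namely, verifying that every exit into $V^c$ is a jump (via quasi-left-continuity and $\partial U \subset V$) so that the L\'evy system applies as an \emph{equality}, and that the interchange of expectation and time integral is justified. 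Once Step 1 is in place, Steps 2 and 3 reduce to standard comparisons and to the Dirichlet-form machinery already embedded in Lemma~\ref{lem:jumpkernelequivalence} and Corollary~\ref{cor:intgrabilityequivalence}.
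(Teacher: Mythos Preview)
Your argument is essentially correct, and Step~1 is identical to the paper's: the L\'evy-system identity (using $X_{\tau_U-}\in\overline U\subset V$, so that $\1_{V^c}|u|^q(X_{\tau_U-})=0$ and the exit into $V^c$ is a pure jump) followed by the kernel bound and Lemma~\ref{lem:jumpkernelequivalence}(1). For Step~3 the paper, like you, gives no details and simply cites the method of \cite[Lemma~2.3]{Chen}; note however that your ``Riesz-type decomposition $u_V=H_Uu_V+(u_V-H_Uu_V)$'' is not the relevant mechanism---what one actually uses is that $f=\1_U\E_\cdot[|\tilde u-\tilde u_V|(X_{\tau_U})]$ is, via the L\'evy system (cf.\ the proof of Lemma~\ref{lem:Subharmonicity}), a Green potential $G_U\mu$ for a smooth measure $\mu$ on $U$, and the uniform bound \eqref{eq:Finiteness} combined with the argument of \cite[Lemma~2.3]{Chen} then places $G_U\mu$ in $D(\mathscr{E}_U)_e$.

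The one genuine difference is in Step~2. The paper obtains $\bigl(1\land{\sf d}(\cdot,o)^{-p}\bigr)|u_V|\in L^1(E;\m)$ by invoking Lemma~\ref{lem:intgrabilityequivalence1} and then applies \eqref{eq:1} to $u_V$ with $q=1$ directly. Your route---$u_V\in L^2(E;\m)$ gives $\bigl(1\land{\sf d}(\cdot,o)^{-p}\bigr)|u_V|^2\in L^1$ trivially, apply \eqref{eq:1} with $q=2$, then Cauchy--Schwarz down to $q=1$---is more self-contained, since Lemma~\ref{lem:intgrabilityequivalence1} as stated requires a two-sided comparison of $J$ with ${\sf d}(x,y)^{-p}\m(\d x)\m(\d y)$ that the present proposition does not assume.
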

\begin{proof}[{\bf Proof.}]
By use of L\'evy system, Lemma~\ref{lem:jumpkernelequivalence}(1) and 
\cite[(A.3.33)]{CFbook}, 
\begin{align*}
\sup_{x\in U}\E_x[(\1_{V^c}|u|^q)(X_{\tau_U})]&=
\sup_{x\in U}\E_x[(\1_{V^c}|u|^q)(X_{\tau_U})-\1_{V^c}|u|^q(X_{\tau_U-}))]\\
&=\sup_{x\in U}\E_x\left[\sum_{s\leq\tau_U}((\1_{V^c}|u|^q)(X_s)-(\1_{V^c}|u|^q)(X_{s-}))\right]\\
&=\sup_{x\in U}\E_x\left[\int_0^{\tau_U}\left(\int_E\left((
\1_{V^c}|u|^q)(y)-(\1_{V^c}|u|^q)(X_s)\right)N(X_s,{\d} y)  \right){\d} H_s \right]\\
&=\sup_{x\in U}\E_x\left[\int_0^{\tau_U}\left(\int_E(\1_{V^c}|u|^q)(y)N(X_s,{\d} y)  \right){\d} H_s \right]\\
&\leq D_2\sup_{x\in U}\E_x\left[\int_0^{\tau_U}
\int_{V^c}|u|^q(y)\frac{{\m}({\d} y)}{{\sf d}(X_s,y)^p}{\d} H_s
\right]
\\
&\leq D_2C_2\sup_{x\in U}\E_x\left[\int_0^{\tau_U}\int_{V^c}|u|^q(y)\left(
1\land \frac{1}{{\sf d}(y,o)^p}
 \right){\m}({\d} y){\d} s \right]\\
 &=D_2C_2\int_{V^c}\left(
1\land \frac{1}{{\sf d}(y,o)^p}
 \right)|u(y)|^q{\m}({\d} y)\cdot\sup_{x\in U}\E_x[\tau_U]\\
 &\leq D_2C_2\int_{E}\left(
1\land \frac{1}{{\sf d}(y,o)^p}
 \right)|u(y)|^q{\m}({\d} y)\cdot\sup_{x\in U}\E_x[\tau_U]<\infty. 
\end{align*}
Next, for a given nearly Borel function $u\in  D(\mathscr{E})_{\loc}$ satisfying $\left(1\land\frac{1}{{\sf d}(\cdot,o)^p} \right)|u|\in L^1(E;{\m})$ (resp.~$\left(1\land\frac{1}{{\sf d}(\cdot,o)^p} \right)|u|^2\in L^1(E;{\m})$), 
we prove \eqref{eq:Finiteness} for any $U\Subset V\Subset E$ with $o\in V$. 
Since $u\in D(\mathscr{E})_{\loc}$, 
there exists $u_V\in D(\mathscr{E})\subset L^2(E;{\m})\subset L^1_{\loc}(E;{\m})$ such that $u=u_V$ ${\m}$-a.e.~on $V$.
Applying Lemma~\ref{lem:intgrabilityequivalence1} to $u_V\in L^2(E;{\m})\subset L^1_{\loc}(E;{\m})$, we have 
\begin{align*}
\left(1\land \frac{1}{{\sf d}(\cdot,o)^p} \right)|u_V|\in L^1(E;{\m}) \qquad 
\text{{\rm(}resp.~$\left(1\land \frac{1}{{\sf d}(\cdot,o)^p} \right)|u_V|^2\in L^1(E;{\m})$ {\rm)}} 
\end{align*}
under $u\in D(\mathscr{E})_{\loc}$.
Then the first argument of the present proof yields 
\begin{align}
\sup_{x\in U}\E_x[\1_{V^c}|u_V|(X_{\tau_U})]<\infty\qquad 
\text{{\rm(}resp.~$\sup_{x\in U}\E_x[\1_{V^c}|u_V|^2(X_{\tau_U})]<\infty${\rm)}}
\label{eq:2}
\end{align}
under $u\in D(\mathscr{E})_{\loc}$. Combining \eqref{eq:1} and \eqref{eq:2}, we obtain \eqref{eq:Finiteness}.
Finally, \eqref{eq:2.3} can be deduced 
from \eqref{eq:Finiteness} 
by the same way of the proof of  \cite[Lemma~2.3]{Chen}.
\end{proof}
A universally measurable function 
$u$ defined on an open set $U$ is said to be 
{\it excessive with respect to ${\bf X}_U$} if it is non-negative on $U$ and 
$p_t^Uu(x)\uparrow u(x)$ as $t\downarrow0$ for any $x\in U$. 
Here $p_t^Uu(x):=\mathbb{E}_x[u(X_t):t<\tau_U]$. For $u:=\1_U\mathbb{E}_{\cdot}[h(X_{\tau_U})]$ with a non-negative (nearly) Borel measurable function 
$h$ on $E$, $u$ is always excessive with respect to ${\bf X}_U$. Indeed, 
\begin{align*}
p_t^Uu(x)&=p_t^U(\1_U\mathbb{E}_{\cdot}[h(X_{\tau_U})])\\
&=\mathbb{E}_x[\1_U(X_t)\mathbb{E}_{X_t}[h(X_{\tau_U})]:t<\tau_U]\\
&=\mathbb{E}_x[\1_U(X_t)\mathbb{E}_x[h(X_{t+\tau_U\circ\theta_t})\,|\,\mathscr{F}_t]:t<\tau_U]\\
&=\mathbb{E}_x[\mathbb{E}_x[h(X_{t+\tau_U\circ\theta_t})\1_{\{t<\tau_U\}}\,|\,\mathscr{F}_t]]\\
&=\mathbb{E}_x[h(X_{\tau_U}):t<\tau_U]\\
&=\1_U(x)\mathbb{E}_x[h(X_{\tau_U}):t<\tau_U]\\
&\uparrow \1_U(x)\mathbb{E}_x[h(X_{\tau_U})]=u(x)\quad\text{ as }\quad t\downarrow0.
\end{align*} 
\begin{lem}\label{lem:lattice1}
If $u_1,u_2\in  D(\mathscr{E})^{\diamond}_{e,\loc}$ satisfy \eqref{eq:2.3}, then $u_1\land u_2$ and $u_1\lor u_2$ also satisfy \eqref{eq:2.3}. 
\end{lem}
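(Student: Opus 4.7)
The plan is to verify \eqref{eq:2.3} for $w := u_1 \land u_2$; the case $u_1 \lor u_2$ then follows by the sign-invariance of \eqref{eq:2.3} (replacing $u, u_V$ by $-u, -u_V$) combined with $u_1 \lor u_2 = -((-u_1) \land (-u_2))$. By Lemma~\ref{lem:contraction}, $w \in D(\mathscr{E})^{\diamond}_{e,\loc}$ since $(a_1,a_2) \mapsto a_1 \land a_2$ is a generalized normal contraction. Fix open sets $U \Subset V \Subset D$ and apply \eqref{eq:2.3} to $u_1, u_2$ to obtain $u_{i,V} \in D(\mathscr{E})_e$ with $u_i = u_{i,V}$ $\m$-a.e.~on $V$ and $f_i := \mathbf{1}_U \mathbb{E}_\cdot[|\tilde u_i - \tilde u_{i,V}|(X_{\tau_U})] \in D(\mathscr{E}_U)_e$ for $i=1,2$. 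I would use the natural candidate $w_V := u_{1,V} \land u_{2,V}$, which lies in $D(\mathscr{E})_e$ by closure of the extended Dirichlet space under normal contractions and satisfies $w = w_V$ $\m$-a.e.~on $V$.

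The elementary Lipschitz inequality $|a_1 \land a_2 - b_1 \land b_2| \leq |a_1 - b_1| + |a_2 - b_2|$, applied pointwise to the $\mathscr{E}$-quasi-continuous representatives, yields $|\tilde w - \tilde w_V| \leq |\tilde u_1 - \tilde u_{1,V}| + |\tilde u_2 - \tilde u_{2,V}|$ $\mathscr{E}$-q.e.~on $E$. Taking $\mathbb{E}_x[\,\cdot\,(X_{\tau_U})]$ and multiplying by $\mathbf{1}_U$ then produces
\begin{equation*}
0 \leq h := \mathbf{1}_U \mathbb{E}_\cdot[|\tilde w - \tilde w_V|(X_{\tau_U})] \leq f_1 + f_2 \in D(\mathscr{E}_U)_e.
\end{equation*}

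The main obstacle is promoting this $\m$-a.e.~domination to the membership statement $h \in D(\mathscr{E}_U)_e$. To handle this, I would invoke a domination principle for excessive functions. A strong Markov computation shows that, for $x \in U$,
\begin{equation*}
h(x) - \mathbb{E}_x[h(X_t);\, t < \tau_U] = \mathbb{E}_x[|\tilde w - \tilde w_V|(X_{\tau_U});\, \tau_U \leq t] \downarrow 0 \quad \text{as } t \downarrow 0,
\end{equation*}
using $\tau_U > 0$ $\mathbb{P}_x$-a.s.~on $U$ and the q.e.~finiteness of $h$ inherited from $f_1+f_2$. Hence $h$ is $\mathscr{E}_U$-excessive and vanishes $\mathscr{E}$-q.e.~on $E \setminus U$. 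The standard domination principle for extended part Dirichlet spaces---any non-negative $\mathscr{E}_U$-excessive function dominated $\m$-a.e.~by an element of $D(\mathscr{E}_U)_e$ itself belongs to $D(\mathscr{E}_U)_e$---then applies. This principle can be justified via the identity $\mathscr{E}(\alpha R_\alpha^U h, \alpha R_\alpha^U h) = \alpha(h - \alpha R_\alpha^U h, \alpha R_\alpha^U h)_{L^2}$ together with the pointwise bounds $\alpha R_\alpha^U h \leq h \leq f_1+f_2$ to secure uniform $\mathscr{E}$-boundedness of the family $\{\alpha R_\alpha^U h\}_{\alpha > 0} \subset D(\mathscr{E}_U)$, and then a Banach--Saks-type Cesàro averaging of $\alpha R_\alpha^U h \to h$ $\m$-a.e.~as $\alpha \to \infty$ produces an $\mathscr{E}$-norm convergent approximation of $h$ by elements of $D(\mathscr{E}_U)$. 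This yields $h \in D(\mathscr{E}_U)_e$ and thereby \eqref{eq:2.3} for $w$.
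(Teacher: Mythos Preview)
Your proof is correct and follows essentially the same route as the paper: both use the elementary Lipschitz inequality $|a_1\land a_2 - b_1\land b_2|\le |a_1-b_1|+|a_2-b_2|$ with the choice $w_V=(u_1)_V\land (u_2)_V$, observe that $h=\1_U\mathbb{E}_\cdot[|\tilde w-\tilde w_V|(X_{\tau_U})]$ is ${\bf X}_U$-excessive, and then conclude $h\in D(\mathscr{E}_U)_e$ by the domination principle. The only cosmetic differences are that the paper treats $\land$ and $\lor$ in parallel (rather than via your sign-flip reduction) and simply cites \cite[Lemma~3.3]{CK} for the domination step, whereas you sketch its proof.
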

\begin{proof}[{\bf Proof.}]
By elementary inequalities $|a\land b-c\land d|\leq |a-c|+|b-d|$ and $|a\lor b-c\lor d|\leq |a-c|+|b-d|$, we have 
\begin{align*}
\mathbb{E}_{\cdot}[|\tilde{u}_1\land \tilde{u}_2-\widetilde{(u_1\land u_2)}_V|(X_{\tau_U})]&=
\mathbb{E}_x[|\tilde{u}_1\land \tilde{u}_2-\widetilde{(u_1)}_V\land \widetilde{(u_2)}_V|(X_{\tau_U})]\\&\leq\mathbb{E}_x[|\tilde{u}_1-\widetilde{(u_1)}_V|(X_{\tau_U})]+\mathbb{E}_x[|\tilde{u}_2-\widetilde{(u_2)}_V|(X_{\tau_U})],\\
\mathbb{E}_{\cdot}[|\tilde{u}_1\lor \tilde{u}_2-\widetilde{(u_1\lor u_2)}_V|(X_{\tau_U})]&=
\mathbb{E}_x[|\tilde{u}_1\lor \tilde{u}_2-\widetilde{(u_1)}_V\lor \widetilde{(u_2)}_V|(X_{\tau_U})]\\&\leq\mathbb{E}_x[|\tilde{u}_1-\widetilde{(u_1)}_V|(X_{\tau_U})]+\mathbb{E}_x[|\tilde{u}_2-\widetilde{(u_2)}_V|(X_{\tau_U})].
\end{align*}
Note that there exists $(u_i)_V\in D(\mathscr{E})$ such that 
$u_i=(u_i)_V$ ${\m}$-a.e.~on $V$ for each $i=1,2$. This means 
$u_1\land u_2=(u_1)_V\land (u_2)_V$ and 
(resp.~$u_1\lor u_2=(u_1)_V\lor (u_2)_V$)   
${\m}$-a.e.~on $V$, consequently 
$\wt{(u_1\land u_2)_V}=\wt{(u_1)_V}\land\wt{(u_2)_V}$ (resp.~$\wt{(u_1\lor u_2)_V}=\wt{(u_1)_V}\lor\wt{(u_2)_V}$) $\mathscr{E}$-q.e.~on $V$. 
Since $\1_U\mathbb{E}_{\cdot}[|\tilde{u}_1\land \tilde{u}_2-\widetilde{(u_1\land u_2)}_V|(X_{\tau_U})]$ and $\1_U\mathbb{E}_{\cdot}[|\tilde{u}_1\lor \tilde{u}_2-\widetilde{(u_1\lor u_2)}_V|(X_{\tau_U})]$ are 
excessive with respect to ${\bf X}_U$, we can obtain the assertion 
by \cite[Lemma~3.3]{CK}.
\end{proof}
\begin{lem}\label{lem:lattice2}
If $u\in  D(\mathscr{E})^{\diamond}_{e,\loc}$ satisfies \eqref{eq:2.3}, 
then for $U\Subset D$
\begin{align}
\mathbb{E}_{\cdot}[\tilde{u}(X_{\tau_U})]-u
\in D(\mathscr{E}_U)_e.\label{eq:2.3*}
\end{align}
\end{lem}
\begin{proof}[{\bf Proof.}]
Note that $\1_U\mathbb{E}_{\cdot}[(\tilde{u}-\tilde{u}_V)_{\pm}(X_{\tau_U})]$ is excessive with respect to ${\bf X}_U$. Indeed, 
\begin{align*}
p_t^U(\1_U\mathbb{E}_{\cdot}[(\tilde{u}-\tilde{u}_V)_{\pm}(X_{\tau_U})])(x)&=
\mathbb{E}_x[\mathbb{E}_{X_t}[(\tilde{u}-\tilde{u}_V)_{\pm}(X_{\tau_U})]:t<\tau_U]\\
&=\mathbb{E}_x[\mathbb{E}_x[
(\tilde{u}-\tilde{u}_V)_{\pm}(X_{t+\tau_U\circ \theta_t})
\,|\,\mathscr{F}_t]:t<\tau_U]\\
&=\mathbb{E}_x[\mathbb{E}_x[(\tilde{u}-\tilde{u}_V)_{\pm}(X_{t+\tau_U\circ \theta_t})\1_{\{t<\tau_U\}}\,|\,\mathscr{F}_t]]\\
&=\mathbb{E}_x[\mathbb{E}_x[(\tilde{u}-\tilde{u}_V)_{\pm}(X_{\tau_U})\1_{\{t<\tau_U\}}\,|\,\mathscr{F}_t]]\\
&=\mathbb{E}_x[(\tilde{u}-\tilde{u}_V)_{\pm}(X_{\tau_U})\1_{\{t<\tau_U\}}]
\\&\uparrow \1_U(x)\mathbb{E}_x[(\tilde{u}-\tilde{u}_V)_{\pm}(X_{\tau_U})]
\quad\text{ as }\quad t\downarrow 0.
\end{align*}
Then we have 
$\1_U\mathbb{E}_{\cdot}[(\tilde{u}-\tilde{u}_V)_{\pm}(X_{\tau_U})]\in D(\mathscr{E}_U)_e$ by \cite[Lemma~3.3]{CK}, hence $\1_U\mathbb{E}_{\cdot}[(\tilde{u}-\tilde{u}_V)(X_{\tau_U})]\in D(\mathscr{E}_U)_e$. On the other hand, since 
$u_V\in D(\mathscr{E})_e$,  
$\mathbb{E}_{\cdot}[\tilde{u}_V(X_{\tau_U})]
\in D(\mathscr{E})_e$. Therefore, 
$\mathbb{E}_{\cdot}[\tilde{u}(X_{\tau_U})]-u=
\mathbb{E}_{\cdot}[\tilde{u}_V(X_{\tau_U})]-u_V
+\1_U \mathbb{E}_{\cdot}[(\tilde{u}-\tilde{u}_V)(X_{\tau_U})]\in D(\mathscr{E}_U)_e$. 
\end{proof}
\begin{cor}\label{cor:lattice2}
Fix $\alpha>0$. 
If $u\in  D(\mathscr{E})^{\diamond}_{\loc}$ satisfies 
$\1_U(\cdot)\mathbb{E}_{\cdot}[e^{-\alpha\tau_U}|\tilde{u}-\tilde{u}_V|(X_{\tau_U})]\in D(\mathscr{E})$ for $U\Subset V\Subset D$ with $u_V\in D(\mathscr{E})$ satisfying $u=u_V$ ${\m}$-a.e.~on $V$, then for any $U\Subset D$,
\begin{align}
\mathbb{E}_{\cdot}[e^{-\alpha\tau_U}\tilde{u}(X_{\tau_U})]-u
\in D(\mathscr{E}_U).\label{eq:2.3**}
\end{align}
\end{cor}
\begin{proof}[{\bf Proof.}]
The proof is similar to the proof of Lemma~\ref{lem:lattice2}. We omit it. 
\end{proof}
\begin{lem}\label{lem:Subharmonicity}
If $u\in  D(\mathscr{E})^{\diamond}_{e,\loc}$ satisfies \eqref{eq:2.3}, then 
$\mathbb{E}_{\cdot}[\tilde{u}(X_{\tau_U})]\in D(\mathscr{E})^{\diamond}_{e,\loc}$ is $\mathscr{E}$-harmonic in $U$ for $U\Subset D$. 
\end{lem}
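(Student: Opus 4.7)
The plan is to reduce $\mathscr{E}$-harmonicity of $h:=\mathbb{E}_{\cdot}[\tilde{u}(X_{\tau_U})]$ to the single identity $\mathscr{E}(h,v)=0$ for every $v\in\mathscr{C}_U$, and then to invoke Theorem~\ref{thm:CharacSubhamonicity} (which applies both in the sub- and in the super-harmonic direction when tested against $v^\pm$). Membership $h\in D(\mathscr{E})^{\diamond}_{e,\loc}$ is already provided by Lemma~\ref{lem:lattice2}.

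Fix $v\in\mathscr{C}_U$ and select a relatively compact open $V$ with $\overline{U}\cup\supp[v]\Subset V\Subset E$, together with $u_V\in D(\mathscr{E})_e$ such that $u=u_V$ $\m$-a.e.~on $V$. Decompose
\[
h=H_Uu_V+g,\qquad H_Uu_V(x):=\mathbb{E}_x[\tilde{u}_V(X_{\tau_U})],\qquad g(x):=\mathbb{E}_x[(\tilde{u}-\tilde{u}_V)(X_{\tau_U})].
\]
The classical Dirichlet-form balayage theorem gives $H_Uu_V\in D(\mathscr{E})_e$ and $\mathscr{E}(H_Uu_V,v)=0$. Since $\tilde{u}=\tilde{u}_V$ on $V$, the function $g$ vanishes on $V\setminus U$, and I further split $g=g_U+\1_{V^c}(u-u_V)$ where $g_U:=\1_Ug\in D(\mathscr{E}_U)_e$, the latter being \eqref{eq:2.3} combined with the excessivity argument (via \cite[Lemma~3.3]{CK}) already used to prove Lemma~\ref{lem:lattice2}.

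The contribution $\mathscr{E}(\1_{V^c}(u-u_V),v)$ is immediate: support-disjointness kills the strong-local and killing components, and the jump part collapses to
\[
\mathscr{E}^{(j)}(\1_{V^c}(u-u_V),v)=-2\int_{U\times V^c}(u-u_V)(y)\tilde{v}(x)\,J(\d x\,\d y)=:-2I.
\]
For $\mathscr{E}(g_U,v)$ I invoke the L\'evy system $(N,H)$: since $X_{\tau_U}\in V^c$ forces a jump from $U$ directly into $V^c$, one has
\[
g_U(x)=\mathbb{E}_x\!\left[\int_0^{\tau_U}\!\psi(X_s)\,\d H_s\right],\qquad\psi(x):=\int_{V^c}(u-u_V)(y)\,N(x,\d y),\qquad x\in U.
\]
This realises $g_U$ as the $0$-Green potential on the part process ${\bf X}_U$ of the signed smooth measure $\psi\,\mu_H|_U$. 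Revuz duality together with $J=\tfrac12 N\otimes\mu_H$ then yields $\mathscr{E}(g_U,v)=\mathscr{E}_U(g_U,v)=\int_U\tilde{v}\,\psi\,\d\mu_H=2I$. Combining the three pieces, $\mathscr{E}(h,v)=0+2I-2I=0$, as required.

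The main obstacle will be the Green-potential identification of $g_U$ in the last step: one must verify that the L\'evy-system representation above actually realises $g_U$ as a $0$-Green potential in $D(\mathscr{E}_U)_e$ and that the Dirichlet-form duality $\mathscr{E}_U(G_U\psi,v)=\int_U\tilde{v}\,\psi\,\d\mu_H$ applies in our setting. The hypothesis \eqref{eq:2.3} applied to $|u-u_V|$, together with the L\'evy-system identity $\mathbb{E}_x[|u-u_V|\1_{V^c}(X_{\tau_U})]=\mathbb{E}_x[\int_0^{\tau_U}|\psi|(X_s)\,\d H_s]$, ensures that $|\psi|\,\mu_H|_U$ is smooth and of finite ${\bf X}_U$-potential, so all integrals are absolutely convergent and the required duality is available.
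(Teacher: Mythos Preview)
Your proof is correct and follows essentially the same route as the paper: the decomposition $h=H_Uu_V+g$ with $g=\1_Ug+(u-u_V)$, the balayage harmonicity of $H_Uu_V$, the L\'evy-system identification of $\1_Ug$ as a $0$-Green potential $G_U(\mu_1-\mu_2)$ on ${\bf X}_U$, and the resulting cancellation $\mathscr{E}(\1_Ug,\varphi)+\mathscr{E}(u-u_V,\varphi)=2I-2I=0$ are exactly what the paper does. The only cosmetic difference is that the paper splits the signed measure into its positive and negative parts $\mu_1,\mu_2$ from the outset (citing \cite[Theorem~2.7]{Chen}), whereas you phrase the finite-potential justification via $|\psi|\,\mu_H|_U$ and \eqref{eq:2.3}.
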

\begin{proof}[{\bf Proof.}]
Take $V$ with $U\Subset V\Subset D$. 
We set $h_1:=\mathbb{E}_{\cdot}[\tilde{u}_V(X_{\tau_U})]$ and $h_2:=\mathbb{E}_{\cdot}[(\tilde{u}-\tilde{u}_V)(X_{\tau_U})]$, which is well-defined by condition \eqref{e:2.3}. Since $h_1\in D(\mathscr{E})_e$ is $\mathscr{E}$-harmonic in $U$, it suffices to show the 
$\mathscr{E}$-harmonicity on $U$ of $h_2$. We already know $h_2
=\1_Uh_2+(u-u_V)\in D(\mathscr{E}_U)_e+D(\mathscr{E})^{\diamond}_{e,\loc}\subset D(\mathscr{E})^{\diamond}_{e,\loc}$. From this,  
$\mathbb{E}_{\cdot}[\tilde{u}(X_{\tau_U})]=h_1+h_2\in D(\mathscr{E})_{e,\loc}^{\diamond}$.  
By use of the L\'evy system of ${\bf X}$ and \cite[(A.3.33)]{CFbook}, we have  
\begin{align}
\1_U(x)&\mathbb{E}_x[(\tilde{u}-\tilde{u}_V)^{\pm}(X_{\tau_U})]\notag\\&=
\1_U(x)\mathbb{E}_x\left[\sum_{s\leq\tau_U}\left((\tilde{u}-\tilde{u}_V)^{\pm}(X_s)-(\tilde{u}-\tilde{u}_V)^{\pm}(X_{s-})\right)\right]\notag
\\&=\1_U(x)\mathbb{E}_x\left[\int_0^{\tau_U}\left(\int_{V^c}\left((\tilde{u}-\tilde{u}_V)^{\pm}(y)-(\tilde{u}-\tilde{u}_V)^{\pm}(X_s)\right)N(X_s,{\d} y)\right){\d} H_s \right]\notag\\
&=\1_U(x)\mathbb{E}_x\left[\int_0^{\tau_U}\left(\int_{V^c}(\tilde{u}-\tilde{u}_V)^{\pm}(y)N(X_s,{\d} y)\right){\d} H_s \right],\label{eq:relation}
\end{align}
which is excessive with respect to ${\bf X}_U$, hence 
belongs to $D(\mathscr{E}_U)_e$ by 
\cite[Lemma~3.3]{CK}.
Define measures $\mu_1,\mu_2$ on $U$ by 
\begin{align*}
\mu_1({\d} x):&=\1_U(x)\left(\int_{V^c}(\tilde{u}-\tilde{u}_V)^+(y)N(x,{\d} y)\right)\mu_H({\d} x),\\ 
\mu_2({\d} x):&=\1_U(x)\left(\int_{V^c}(\tilde{u}-\tilde{u}_V)^-(y)N(x,{\d} y)\right)\mu_H({\d} x).
\end{align*}
Then, they are smooth measures with respect to 
${\bf X}_U$. Indeed, we have 
\begin{align*}
\mu_1(U)&=\int_U\int_{V^c}(\tilde{u}-\tilde{u}_V)^+(y)N(x,{\d} y)\mu_H({\d} x)\\
&=\int_U\int_{V^c}|(\tilde{u}-\tilde{u}_V)^+(y)-(\tilde{u}-\tilde{u}_V)^+(x)|N(x,{\d} y)\mu_H({\d} x)\\
&=\frac12\int_{U\times V^c}|(\tilde{u}-\tilde{u}_V)^+(y)-(\tilde{u}-\tilde{u}_V)^+(x)|J({\d} x{\d} y)
<\infty
\end{align*}
by Lemma~\ref{lem:contraction} with $u-u_V\in D(\mathscr{E})^{\diamond}_{e,\loc}$. Since $\mu_H$ charges no $\mathscr{E}$-exceptional set, 
$\mu_1$ does so. 
This together with $\mu_1(U)<\infty$ means that $\mu_1$ is smooth with respect to 
${\bf X}_U$. Similarly, so is $\mu_2$. 
For a smooth measure $\nu$ of ${\bf X}_U$, we write  
$R^U\!\nu:=\mathbb{E}_{\cdot}[A_{\tau_U}^{\nu}]$, where $A^{\nu}$ is the PCAF of ${\bf X}_U$ with Revuz measure $\nu$.
Then $R^U\!\mu_1(x)=\1_U\mathbb{E}_{\cdot}[(\tilde{u}-\tilde{u}_V)^+(X_{\tau_U})]\in D(\mathscr{E}_U)_e$ and $R^U\!\mu_2(x)=\1_U\mathbb{E}_{\cdot}[(\tilde{u}-\tilde{u}_V)^-(X_{\tau_U})]\in D(\mathscr{E}_U)_e$
by the calculation \eqref{eq:relation} above.  
For $k\geq1$, let $F_k:=\{x\in U\mid R^U\!\mu_1(x)\leq k\}$, which is a finely closed subset of $U$. Define $\nu_k:=\1_{F_k}\mu_1$. 
Then for $x\in F_k$, $R^U\!\nu_k(x)\leq R^U\!\mu_i(x)\leq k$, while for $x\in U\setminus F_k$, 
\begin{align*}
R^U\!\nu_k(x)=\mathbb{E}_x[R^U\!\nu_k(X_{\sigma_{F_k}})]\leq k.
\end{align*}
That is, we have $R^U\!\nu_k\leq k\land R^U\!\mu_1$. As both
 $R^U\!\nu_k$ and $k\land R^U\!\mu_1$ are excessive functions of ${\bf X}_U$ and $\m(U)<\infty$, we have $\{R^U\!\nu_k,k\land R^U\!\mu_1\}\subset D(\mathscr{E}_U)$ and 
 \begin{align*}
 \mathscr{E}(R^U\!\nu_k,R^U\!\nu_k)\leq\mathscr{E}(k\land R^U\!\mu_1,k\land R^U\!\mu_1)\leq \mathscr{E}(R^U\!\mu_1,R^U\!\mu_1)<\infty
 \end{align*}
by \cite[Theorem~1.1.5 and Lemma~1.2.3]{CFbook}. Moreover, for each $k\geq1$, we have by \cite[Theorem~4.1.1]{CFbook} or \cite[Theorem~5.1.3]{FOT}
\begin{align*}
\mathscr{E}(R^U\!\nu_k,R^U\!\nu_k)&=\uparrow\lim_{\beta\to\infty}\beta((I-\beta R^U_{\beta})R^U\!\nu_k,R^U\!\nu_k)_{L^2(U;{\m})}\\
&=\uparrow\lim_{\beta\to\infty}
\beta (R^U_{\beta}\nu_k,R^U\!\nu_k)_{L^2(U;{\m})}\\
&=\uparrow\lim_{\beta\to\infty}
\langle \nu_k,\beta R^U_{\beta}R^U\!\nu_k\rangle\\
&=\langle \nu_k\,R^U\!\nu_k\rangle,
\end{align*}
which increases to $\langle \mu_1\,R^U\!\mu_1\rangle$. 
Consequently, $\langle \mu_1\,R^U\!\mu_1\rangle\leq \mathscr{E}(R^U\!\mu_1,R^U\!\mu_1)<\infty$. Similarly, $\langle \mu_2\,R^U\!\mu_2\rangle\leq \mathscr{E}(R^U\!\mu_2,R^U\!\mu_2)<\infty$.  
Applying \cite[Lemma~2.4]{Chen} to ${\bf X}_U$, we have 
\begin{align*}
\mathscr{E}(R^U\!\mu_i,\varphi)=
\langle \mu_i,\varphi\rangle
\end{align*}
for any $\varphi\in D(\mathscr{E}_U)\cap C_c(U)$ and $i=1,2$. 
Then we have that for any $\varphi\in \mathscr{C}_U$
\begin{align*}
\mathscr{E}(\1_Uh_2,\varphi)&=\mathscr{E}(R^U\!\mu_1-R^U\!\mu_2,\varphi)
\\&=\langle \mu_1-\mu_2,\varphi\rangle
\\&=\int_E\varphi(x)\1_U(x)\int_{V^c}
(\tilde{u}-\tilde{u}_V)(y)N(x,{\d} y)\mu_H({\d} x)\\
&=2\int_E\varphi(x)\1_U(x)\int_{V^c}
(\tilde{u}-\tilde{u}_V)(y)J({\d} x{\d} y).
\end{align*}
On the other hand, for $\varphi\in\mathscr{C}_U$
\begin{align*}
\mathscr{E}(u-u_V,\varphi)&=\int_{E\times E}(\tilde{u}(x)-\tilde{u}_V(x)-\tilde{u}(y)+\tilde{u}_V(y))
(\varphi(x)-\varphi(y))J({\d} x{\d} y)\\
&=2\int_{U\times V^c}(-\tilde{u}(y)+\tilde{u}_V(y))
\varphi(x)J({\d} x{\d} y)\\
&=-2\int_{U\times V^c}(\tilde{u}(y)-\tilde{u}_V(y))\varphi(x)J({\d} x{\d} y)
\end{align*}
Hence
\begin{align*}
\mathscr{E}(h_2,\varphi)&=\mathscr{E}(\1_Uh_2,\varphi)+\mathscr{E}(u-u_V,\varphi)=0.
\end{align*}
\end{proof}
\begin{cor}\label{cor:Subharmonicity}
Fix $\alpha>0$ and suppose \eqref{e:2.3}.
If $u\in  D(\mathscr{E})^{\diamond}_{\loc}$ satisfies 
$\1_U(\cdot)\mathbb{E}_{\cdot}[e^{-\alpha\tau_U}|\tilde{u}-\tilde{u}_V|(X_{\tau_U})]\in D(\mathscr{E})$ for $U\Subset V\Subset D$ with $u_V\in D(\mathscr{E})$ satisfying $u=u_V$ ${\m}$-a.e.~on $V$, then $\mathbb{E}_{\cdot}[e^{-\alpha\tau_U}\tilde{u}(X_{\tau_U})]\in D(\mathscr{E})^{\diamond}_{\loc}$ is $\mathscr{E}_{\alpha}$-harmonic in $U$ for any $U\Subset D$. 
\end{cor}
\begin{proof}[{\bf Proof.}]
The proof is similar to the proof of Lemma~\ref{lem:Subharmonicity}.
Set $h_1^{\alpha}:=\mathbb{E}_{\cdot}[e^{-\alpha\tau_U}\tilde{u}_V(X_{\tau_U})]$ and $h_2^{\alpha}:=\mathbb{E}_{\cdot}[e^{-\alpha\tau_U}(\tilde{u}-\tilde{u}_V)(X_{\tau_U})]$. Since $h_1^{\alpha}\in D(\mathscr{E})$ is $\mathscr{E}_{\alpha}$-harmonic in $U$, it suffices to show the 
$\mathscr{E}_{\alpha}$-harmonicity on $U$ of $h_2^{\alpha}$. We already know $h_2^{\alpha}
=\1_Uh_2^{\alpha}+(u-u_V)\in D(\mathscr{E}_U)+D(\mathscr{E})^{\diamond}_{\loc}\subset D(\mathscr{E})^{\diamond}_{\loc}$. 
From this,  
$\mathbb{E}_{\cdot}[e^{-\alpha\tau_U}\tilde{u}(X_{\tau_U})]=h_1^{\alpha}+h_2^{\alpha}\in D(\mathscr{E})_{\loc}^{\diamond}$.
 
By \cite[(A.3.33)]{CFbook}, instead of \eqref{eq:relation} we have  
\begin{align}
\1_U(x)&\mathbb{E}_x[e^{-\alpha\tau_U}(\tilde{u}-\tilde{u}_V)^{\pm}(X_{\tau_U})]\notag\\&=\1_U(x)\mathbb{E}_x\left[\int_0^{\tau_U}e^{-\alpha t}\left(\int_{V^c}(\tilde{u}-\tilde{u}_V)^{\pm}(y)N(X_s,{\d} y)\right){\d} H_s \right].\label{eq:relation2}
\end{align}
As in the proof of Lemma~\ref{lem:Subharmonicity}, the measure $\mu_i$ becomes an $\alpha$-order energy measure of finite integrals for each $i=1,2$. Then we can deduce $R_{\alpha}^U\mu_i\in D(\mathscr{E}_U)$ and 
$\mathscr{E}_{\alpha}(R_{\alpha}^U\mu_i,\varphi)=\langle \mu_i,\varphi\rangle$ 
for $\varphi\in\mathscr{C}_U$ and $i=1,2$. The rest of the proof is similar to the proof of Lemma~\ref{lem:Subharmonicity}. 
\end{proof}
The ${\m}$-symmetric Markov process {\bf X} is said to satisfy the \emph{absolute continuity condition with respect to ${\m}$
{\rm(}{\bf (AC)} in short{\rm)}} if $p_t(x,\cdot)\ll{\m}(\cdot)$ for any $t>0$ and $x\in E$, where $p_t(x,A):=\P_x(X_t\in A)$ for $A\in\mathscr{B}(E)$.  
The next lemma relaxes the local square integrability condition for $u$ in 
\cite[Lemma~3.9]{CK} without assuming the local boundedness.

\begin{lem}[{cf.~\cite[Lemma~3.9]{CK}}]\label{lem:EquivalenceSubharmonicity}
Let $D$ be an open set and $u$ is a nearly Borel $\overline{\R}$-valued function on $E$. 
Then we have the following:
\begin{enumerate}
\item[\rm(1)]
Suppose \eqref{e:2.3} and $u\in L^1_{\loc}(D;{\m})$. If $u$ is subharmonic in $D$, then $u$ is subharmonic in $D$ in the weak sense. Moreover, under ${\bf (AC)}$ for ${\bf X}$, if $u$ is a subharmonic function on $D$ without  exceptional set and is in $L^1_{\loc}(D;{\m})$, then $u$ is subharmonic in $D$ in the weak sense without exceptional set.
\item[\rm(2)] If $u$ is a nearly Borel finely continuous 
$\overline{\R}$-valued function such that $u$ is subharmonic in $D$ in the weak sense, 
then for any $U\Subset D$, $t\mapsto u(X_{t\land\tau_U})$ is a 
{\rm(}not necessarily uniformly integrable{\rm)} $\mathbb{P}_x$-submartingale for q.e.~$x\in E$. Moreover, under ${\bf (AC)}$ for ${\bf X}$ , if $u$ is a finely continuous subharmonic function on $D$ in the weak sense without exceptional set, then for $U$ above, $\{u(X_{t\land\tau_U})\}_{t\geq0}$ is a {\rm(}not necessarily uniformly integrable{\rm)} $\P_x$-submartingale for all $x\in U$. 
\end{enumerate}
\end{lem}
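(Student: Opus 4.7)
The plan is to follow the strategy of \cite[Lemma~3.9]{CK}, relaxing the local square-integrability hypothesis to $L^1_{\loc}$-integrability in part~(1) and dispensing with it entirely in part~(2).

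For part~(1), assume $u$ is subharmonic on $D$ in the stochastic sense and $u\in L^1_{\loc}(D;\m)$. Fix $U\Subset D$ and work under $\P_x$ for q.e.\ $x$. First I would use the uniform integrability of the right-continuous $\P_x$-submartingale $t\mapsto\tilde u(X_{t\wedge\tau_U})$ together with Doob's $L^1$-convergence theorem to produce the limit $\tilde u(X_{\tau_U})$ (with the convention $\tilde u(\partial)=0$), yielding $\E_x[|\tilde u|(X_{\tau_U})]<\infty$. The submartingale inequality between $t=0$ and $t\to\infty$ then gives $\tilde u(x)\le \E_x[\tilde u(X_{\tau_U})]$ for q.e.\ $x\in E$. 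Fine continuity of $\tilde u$ q.e.\ on $D$ follows from the right-continuity of $t\mapsto\tilde u(X_{t\wedge\tau_U})$ along the paths of $\mathbf X$, together with the path-regularity characterization of fine continuity; letting $U$ exhaust $D$ gives fine continuity q.e.\ on $D$. Under $\mathbf{(AC)}$, $\m$-polar sets are polar, so the q.e.\ conclusions upgrade to everywhere-on-$E$ statements, proving the no-exceptional-set version. Condition \eqref{e:2.3} is used exactly to ensure that the submartingale inequality is non-trivial, i.e., that $\P_x(\tau_U<\infty)>0$ on a set of positive capacity.

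For part~(2), fix $U\Subset D$. By the Markov property at time $s$, the submartingale inequality $\E_x[\tilde u(X_{t\wedge\tau_U})\mid\mathcal{F}_s]\ge \tilde u(X_{s\wedge\tau_U})$ reduces (on $\{s<\tau_U\}$, using $\tau_U\circ\theta_s=\tau_U-s$) to the pointwise inequality
\[
\tilde u(y)\le\E_y\bigl[\tilde u(X_{r\wedge\tau_U})\bigr]\qquad \text{for q.e.\ }y\in U,\ r\ge0.
\]
To establish this, I would exhaust $U$ by an increasing family of relatively compact open subsets $V_n\Subset V_{n+1}\Subset U$ with $y\in V_1$, apply the weak-sense subharmonicity on each $V_n$ to get $\tilde u(y)\le\E_y[\tilde u(X_{\tau_{V_n}})]$, and then use a combination of the strong Markov property at $\tau_{V_n}\wedge r$ and the weak-sense subharmonicity at $X_r\in V_n$ on $\{r<\tau_{V_n}\}$ to transfer the inequality to the stopped time $r\wedge\tau_{V_n}$. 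Passing to the limit $n\to\infty$ via quasi-left continuity of $\mathbf X$ and fine continuity of $\tilde u$ at $X_{\tau_U}$, together with dominated convergence justified by $\E_y[|\tilde u|(X_{\tau_U})]<\infty$ (supplied by the weak-sense hypothesis), yields the claim. Right-continuity of $t\mapsto\tilde u(X_{t\wedge\tau_U})$ under $\P_y$ then follows from the fine continuity of $\tilde u$ and the right-continuity of the paths of $\mathbf X$, and the submartingale property follows by a standard conditional-expectation argument. Under $\mathbf{(AC)}$, the exceptional set again disappears.

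The main obstacle is the limiting procedure $n\to\infty$ in part~(2): the naive strong-Markov decomposition of $\tilde u(X_{\tau_V})$ at time $r$ only yields $\E_y[\tilde u(X_{r\wedge\tau_V})]\le \E_y[\tilde u(X_{\tau_V})]$, which goes the wrong way. One must therefore combine the weak-sense bound on the outer expectation with a more careful strong-Markov decomposition at $r\wedge\tau_V$ applied to $\tilde u(X_{\tau_V})$, in order to extract the forward-in-$r$ inequality. In the original \cite[Lemma~3.9]{CK} the local $L^2$-hypothesis was used to control oscillations of $\tilde u$ along $\tau_{V_n}\uparrow\tau_U$ via $\mathscr E$-energy estimates; in the present setting these estimates must be replaced by the $L^1$-bound $\E_y[|\tilde u|(X_{\tau_U})]<\infty$ (supplied by weak subharmonicity) together with fine continuity of $\tilde u$, and verifying that this pair of tools is just strong enough to justify the dominated-convergence step is where the argument is delicate.
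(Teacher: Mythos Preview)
There is a genuine gap in your treatment of part~(1). You assert that Doob's $L^1$-convergence theorem identifies the limit of $\tilde u(X_{t\wedge\tau_U})$ as $\tilde u(X_{\tau_U})$, but this identification is valid only on $\{\tau_U<\infty\}$. On the event $\{\tau_U=\infty\}$ the submartingale converges to some random variable $\xi$ with no a~priori relation to $\tilde u(\partial)=0$, so optional sampling yields only
\[
\tilde u(x)\le \E_x[\tilde u(X_{\tau_U})]+\E_x[\xi\,\1_{\{\tau_U=\infty\}}].
\]
The entire content of the paper's proof is to show that $u_2(x):=\E_x[\xi\,\1_{\{\tau_U=\infty\}}]$ vanishes q.e., and this is precisely where both hypotheses you have not accounted for enter. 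One checks that $u_2$ is $P_t^U$-invariant; then the assumption $u\in L^1_{\loc}(D;\m)$ (which you never use) is needed to show $u_2\in L^1(U;\m)$ via $\int_U|u_2|\,\d\m\le\int_U|u|\,\d\m$; finally, the invariance combined with the duality identity $\int_U P_t^U|u_2|\,\d\m=\int_U\P_x(t<\tau_U)|u_2|\,\d\m$ and condition~\eqref{e:2.3} forces $u_2=0$ $\m$-a.e. Your description of the role of~\eqref{e:2.3} (``to ensure that the submartingale inequality is non-trivial'') misses its actual function, which is to kill the $\{\tau_U=\infty\}$ contribution.

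For part~(2), the paper simply invokes \cite[Lemma~3.9(ii)]{CK} verbatim: no hypothesis is being relaxed there, so your discussion of replacing $L^2$-energy estimates by $L^1$-bounds and your acknowledged ``main obstacle'' in the limiting step are directed at a difficulty that does not exist. The relaxation from $L^2_{\loc}$ to $L^1_{\loc}$ is entirely a part~(1) issue, and it is handled by the $u_2$ argument above rather than by anything in part~(2).
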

\begin{proof}[{\bf Proof.}]
The statement (2) is nothing but \cite[Lemma~3.9(ii)]{CK}. 
So we only prove (1). Suppose $u\in L^1_{\loc}(D;{\m})$ is subharmonic 
and take $U\Subset D$. 
We see 
that $\{u(X_{t\land\tau_U})\}_{t\geq0}$ is a uniformly integrable $\P_x$-submartingale for q.e.~$x\in E$. Then as $t\to\infty$, $u(X_{t\land\tau_U})$ converges in $L^1(\P_x)$ as well as $\P_x$-a.s. to some random variable $\xi$ for q.e.~$x\in E$. Set $Y_t:=u(X_{t\land\tau_U})$ for $t\in[0,+\infty[$ and $Y_{\infty}:=\xi$. Then $\{Y_t\}_{t\in[0,\infty]}$ is a right-closed $\P_x$-submartingale for q.e.~$x\in E$. Applying optional sampling theorem to $\{Y_t\}_{t\in[0,+\infty]}$, we have $\E_x[u(X_{\tau_U})]<\infty$ and $u(x)\leq\E_x[Y_{\tau_U}]$ for q.e.~$x\in E$. Note that $Y_{\tau_U}\1_{\{\tau_U<\infty\}}=u(X_{\tau_U})$ and 
$Y_{\tau_U}=u(X_{\tau_U})+\xi\1_{\{\tau_U=\infty\}}$ $\P_x$-a.s. for q.e.~$x\in E$. Set $u_2(x):= \E_x[\xi\1_{\{\tau_U=\infty\}}]$. 
We show $u_2=0$ q.e.~on $E$ under \eqref{e:2.3}. We can confirm that for each $t>0$, $p_t^Uu_2(x)=u_2(x)$ for q.e.~$x\in U$. 
Indeed, 
\begin{align*}
p_t^Uu_2(x)&=\mathbb{E}_x[\mathbb{E}_{X_t}[\xi\1_{\{\tau_U=\infty\}}]:t<\tau_U]\\
&=\mathbb{E}_x[\mathbb{E}_x[\xi\circ\theta_t
\1_{\{\tau_U\circ\theta_t=\infty\}}\,|\,\mathscr{F}_t]:t<\tau_U]\\
&=\mathbb{E}_x[\mathbb{E}_x[\xi\circ\theta_t\1_{\{t+\tau_U\circ\theta_t=\infty,t<\tau_U\}}\,|\,\mathscr{F}_t]]\\
&=\mathbb{E}_x[\xi\1_{\{\tau_U=\infty\}}]=u_2(x).
\end{align*}
Note that 
\begin{align*}
u_2(x)=\lim_{t\to\infty}\E_x[u(X_t)\1_U(X_t)\1_{\{\tau_U=\infty\}}]
\end{align*}
for q.e.~$x\in E$. Then
\begin{align*}
\int_U|u_2|{\m}({\d} x)\leq\varliminf_{n\to\infty}\int_Up_n\1_U|u|(x){\m}({\d} x)\leq\int_U|u(x)|{\m}({\d} x)<\infty.
\end{align*}
Thus $u_2\in L^1(U;{\m})$. Moreover, 
\begin{align*}
\int_U|u_2|{\d}{\m}&=\int_U|p_t^Uu_2|{\d}{\m}\\
&\leq \int_Up_t^U|u_2|{\d}{\m}\\
&=\int_U(p_t^U1)|u_2|{\d}{\m}=\int_U\P_x(t<\tau_U)|u_2(x)|{\m}({\d} x)\\
&\hspace{1cm}\downarrow\int_U(1-\P_x(\tau_U<\infty))|u_2(x)|{\m}({\d} x)\quad \text{ as }\quad t\uparrow\infty.
\end{align*}
From this with $u_2\in L^1(U;{\m})$, 
\begin{align*}
\int_U\P_x(\tau_U<\infty)|u_2(x)|{\m}({\d} x)=0.
\end{align*}
By \eqref{e:2.3}, $u_2=0$ ${\m}$-a.e.~on $U$, hence $u_2=0$ q.e.~on $U$, because $u_2=p_t^Uu_2$ q.e.~on $U$ is $\mathscr{E}_U$-quasi-continuous 
on $U$.   
Therefore, we obtain that $u(x)\leq\E_x[u(X_{\tau_U})]$ for q.e.~$x\in U$ 
under  \eqref{e:2.3}. That is, $u$ is subharmonic in $D$ in the weak sense. 
The rest of the proof is similar to that in \cite[Lemma~3.9(i)]{CK}. We omit it. 
\end{proof}
Next theorem extends \cite[Theorem~3.10]{CK} without assuming the local boundedness.  

\begin{thm}\label{thm:3.10}
Let $D$ be an open set. 
Suppose that $u\in D(\mathscr{E})_{e,\loc}^{\diamond}$ satisfies \eqref{eq:2.3}.  
If $u$ is $\mathscr{E}$-subharmonic in $D$, then $u$ is subharmonic in $D$ in the sense of Definition~\ref{df:stochasticsubharmonicity}.  
Moreover, under {\bf (AC)}, if $u$ is finely continuous and {\rm(}nearly{\rm)} Borel measurable, then $u$ is subharmonic in $D$ without exceptional set.
\end{thm}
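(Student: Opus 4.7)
The plan is to reduce the assertion to weak-sense subharmonicity of $u$ on $D$, invoke Lemma~\ref{lem:EquivalenceSubharmonicity}(2) to obtain the submartingale property, and extract uniform integrability by comparison with a harmonic dominator.

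Fix $U\Subset D$ and set $h(x):=\mathbb{E}_x[\tilde u(X_{\tau_U})]$. By Lemma~\ref{lem:Subharmonicity} $h$ is well-defined q.e., lies in $D(\mathscr{E})_{e,\loc}^{\diamond}$ and is $\mathscr{E}$-harmonic on $U$; by \eqref{eq:2.3*} the difference $w:=h-u$ belongs to $D(\mathscr{E}_U)_e$. Since each $v\in D(\mathscr{E}_U)_c\cap L^{\infty}(U;\m)_+$ is an admissible test function for the $\mathscr{E}$-subharmonicity of $u$ on $D$, subtracting the harmonicity identity for $h$ yields $\mathscr{E}(w,v)\geq 0$ for all such $v$.

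The decisive step is to validate $v=w^-\in D(\mathscr{E}_U)_e$ as a test function. Because $\tilde w$ vanishes q.e.~on $E\setminus U$ and $\overline U$ is compact in $D$, the truncations $w^-\land n$ belong to $D(\mathscr{E}_U)_c\cap L^{\infty}(U;\m)_+$ and converge $\m$-a.e.~to $w^-$ while remaining $\mathscr{E}_1^{1/2}$-bounded; a Ces\`aro-mean argument modelled on the proof of Theorem~\ref{thm:CharacSubhamonicity} then supplies convex combinations that $\mathscr{E}_1^{1/2}$-converge to $w^-$. Since $w\in D(\mathscr{E}_U)_e\subset D(\mathscr{E})_{e,\loc}^{\dag}$, Corollary~\ref{cor:welldefinedenergy} authorises the passage to the limit and yields $\mathscr{E}(w,w^-)\geq 0$. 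Combined with the Markov-property estimate $\mathscr{E}(w,w^-)=\mathscr{E}(w^+,w^-)-\mathscr{E}(w^-,w^-)\leq-\mathscr{E}(w^-,w^-)$, this forces $\mathscr{E}(w^-,w^-)=0$, and the transience of the part process on the relatively compact set $U$ then delivers $w^-=0$ q.e., whence $u\leq h$ q.e.~on $E$.

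Weak-sense subharmonicity of $u$ on $D$ is now at hand: $\tilde u$ is q.e.~finely continuous on $D$, $\mathbb{E}_x[|\tilde u|(X_{\tau_U})]<\infty$ q.e.~by \eqref{eq:2.3} together with the standard hitting-operator estimate for $u_V\in D(\mathscr{E})_e$, and $u\leq\mathbb{E}_{\cdot}[\tilde u(X_{\tau_U})]$ q.e.~on $U$. Lemma~\ref{lem:EquivalenceSubharmonicity}(2) therefore makes $t\mapsto\tilde u(X_{t\land\tau_U})$ a $\mathbb{P}_x$-submartingale for q.e.~$x\in E$. To upgrade to uniform integrability, note that $h(X_{t\land\tau_U})$ is a uniformly integrable $\mathbb{P}_x$-martingale which terminates at $\tilde u(X_{\tau_U})$ (because $h=u$ q.e.~on $E\setminus U$), dominates $\tilde u(X_{t\land\tau_U})$ pointwise, and shares its a.s.~limit as $t\to\infty$; Scheff\'e's lemma applied to the positive and negative parts yields $L^1$-convergence, hence uniform integrability. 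The ``without exceptional set'' statement under \textbf{(AC)} runs identically: absolute continuity together with fine continuity of $u$ upgrades every q.e.~statement above to an everywhere statement, and the second part of Lemma~\ref{lem:EquivalenceSubharmonicity}(2) supplies the submartingale property for every $x$.

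The principal technical obstacle is precisely this extension of the variational inequality to the test $v=w^-$: without local boundedness of $u$ one cannot mimic the direct test-function manipulation of \cite{CK}, and the proof must be channelled through the wider bilinearity of Corollary~\ref{cor:welldefinedenergy} on $D(\mathscr{E})_{e,\loc}^{\dag}\times D(\mathscr{E}_D)_c$, combined with a monotone truncation and Ces\`aro approximation of $w^-$.
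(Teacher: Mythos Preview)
Your argument contains a genuine gap at the step ``the transience of the part process on the relatively compact set $U$ then delivers $w^-=0$ q.e.'' Transience of $\mathbf{X}_U$ is \emph{not} a consequence of $U\Subset D$ alone, and Theorem~\ref{thm:3.10} does not assume \eqref{e:2.3}. For a concrete obstruction, take $E=\mathbb{R}\sqcup S^1$ with Brownian motion on each component and $U=(-1,1)\sqcup S^1$: then $\mathbf{1}_{S^1}\in D(\mathscr{E}_U)$ satisfies $\mathscr{E}(\mathbf{1}_{S^1},\mathbf{1}_{S^1})=0$ yet is not zero. In general $\mathscr{E}(w^-,w^-)=0$ with $w^-\in D(\mathscr{E}_U)_e$ only yields, via \cite[Lemma~2.2]{Chen}, that $w^-(X_t)=w^-(X_0)$ along trajectories; the conclusion $w^-=0$ would require $\mathbb{P}_x(\tau_U<\infty)>0$ for q.e.\ $x$, i.e.\ precisely \eqref{e:2.3}. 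The paper avoids this by \emph{not} forcing $w^-=0$: it applies \cite[Lemma~2.2]{Chen} to $(u-h_0)^+$, deduces that $A:=\{u>h_0\}$ and $A^c$ are $\mathbf{X}$-invariant, and then argues on each invariant piece separately (on $A$ one has $u-h_0$ constant along trajectories, so $u(X_{t\wedge\tau_U})$ is a constant shift of the uniformly integrable martingale $h_0(X_{t\wedge\tau_U})$; on $A^c$ one is in the situation $u\le h_0$ and proceeds as in \cite[Theorem~3.10]{CK}).

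The same missing hypothesis undermines your uniform-integrability step: you assert that $h(X_{t\wedge\tau_U})$ and $\tilde u(X_{t\wedge\tau_U})$ ``share their a.s.\ limit,'' but the justification ``$h=u$ q.e.\ on $E\setminus U$'' only covers the event $\{\tau_U<\infty\}$. On $\{\tau_U=\infty\}$ there is no reason for the limits to agree, and a nonnegative supermartingale such as $(h-u)(X_{t\wedge\tau_U})$ need not converge in $L^1$, so Scheff\'e does not close the gap. The paper's invariant-set decomposition resolves this as well, since on $A$ the difference $u-h_0$ is pathwise constant. (A minor slip, easily repaired: the truncations $w^-\wedge n$ have support contained in $\overline U\subset D$, hence lie in $D(\mathscr{E}_D)_c\cap L^\infty$ rather than $D(\mathscr{E}_U)_c$; this is still admissible for the $\mathscr{E}$-subharmonicity of $u$ on $D$.)
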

\begin{proof}[{\bf Proof.}]
Suppose that $u\in D(\mathscr{E})_{e,\loc}^{\diamond}$ satisfying \eqref{eq:2.3} is $\mathscr{E}$-subharmonic in $D$.
 Fix an open set $U \Subset D$.
Set $h_0(x):=\mathbb{E}_x[{u}(X_{\tau_U})]$, then, $u-h_0\in D(\mathscr{E}_U)_e$ by Lemma~\ref{lem:lattice2}. 
By Lemma~\ref{lem:Subharmonicity}, $h_0$ is $\mathscr{E}$-harmonic in $U$, which implies the 
$\mathscr{E}$-subharmonicity of $(u-h_0)^+\in D(\mathscr{E}_U)_e$ 
in $U$ thanks to \cite[Lemma~3.2]{CK}. 
As in the proof of \cite[Theorem~3.10]{CK}, we can obtain 
\begin{align*}
\mathscr{E}((u-h_0)^+,(u-h_0)^+)=0.
\end{align*} 
Thus by \cite[Lemma~2.2]{Chen}, we get that 
$(u-h_0)^+(X_t)=(u-h_0)^+(x)$ for all $t\geq0$ $\mathbb{P}_x$-a.s. for q.e.~$x\in E$. Consequently, $(u-h_0)^+(X_t)$ is a bounded $\mathbb{P}_x$-martingale for q.e.~$x\in E$, hence, the sets $A := \{u > h_0\}$ and $A^c = \{u\leq  h_0\}$ are ${\bf X}$-invariant. 
Thus, after removing a properly exceptional set of ${\bf X}$ if necessary, we may and do assume that $h_0$ is finely continuous and that either $A = E$ or $A^c = E$ by regarding $A$ or $A^c$ as the whole space.  
The rest of the proof is similar to \cite[proof of Theorem~3.10]{CK}. 
We emphasize that the local boundedness of $u$ on $D$ is not used in this proof. We omit the proof under {\bf (AC)}.
\end{proof}
For the proof of main theorems below, we need the time change method. 
Let $g\in L^1(E;{\m})$ be a function satisfying $0<g\leq1$ ${\m}$-a.e. 
Consider a PCAF $A_t^g:=\int_0^tg(X_s){\d} s$ and set its right continuous inverse by $\tau_t:=\inf\{s>0\mid t<A_s^g\}$. The Markov process 
 $\check{\bf X}:=(\Omega,\check{X}_t,\mathbb{P}_x)$ defined by $\check{X}_t:=X_{\tau_t}$ is called the {\it time changed process}. 
 It is known that $\check{\bf X}$ is $g{\m}$-symmetric (see \cite[Theorem~6.2.1]{FOT}) and 
 the measure $g{\m}$ has full quasi support (see \cite[p.~190]{FOT}). 
 The Dirichlet form 
 on $L^2(E;g{\m})$ associated with $\check{\bf X}$ is denoted by
 $(\check{\mathscr{E}}, D(\check{\mathscr{E}}))$. 
 Since $g{\m}$ hs full quasi support, 
 $D(\mathscr{E})_e= D(\check{\mathscr{E}})_e$ and 
 $\mathscr{E}(u,v)=\check{\mathscr{E}}(u,v)$ for $u,v\in D(\mathscr{E})_e= D(\check{\mathscr{E}})_e$ (see \cite[(6.2.23)]{FOT}). 
 Next lemma is fundamental. 
 \begin{lem}\label{lem:TimeChange}
 We consider a function $g\in L^1(E;{\m})$ with $0<g\leq 1$ ${\m}$-a.e.~on $E$. Let $\check{\bf X}$ be the time changed process  by $A_t^g$ and 
 $(\check{\mathscr{E}},D(\check{\mathscr{E}}))$ on $L^2(E;g{\m})$ the  Dirichlet form associated with $\check{\bf X}$.   
 The following hold. 
 \begin{enumerate}
 \item[\rm(1)] For $u\in D(\mathscr{E})_{e,\loc}$ {\rm(}resp.~$u\in D(\mathscr{E})_{e,\loc}^{\dag}$, $u\in D(\mathscr{E})_{e,\loc}^{\diamond}${\rm)}, there exists 
 $g\in L^1(E;{\m})$ satisfying $0<g\leq 1$ ${\m}$-a.e.~such that 
 $u\in D(\check{\mathscr{E}})_{\loc}$ {\rm(}resp.~$u\in D(\check{\mathscr{E}})_{\loc}^{\dag}$, $u\in D(\check{\mathscr{E}})_{\loc}^{\diamond}${\rm)} holds for the Dirichlet form $(\check{\mathscr{E}}, D(\check{\mathscr{E}}))$ on $L^2(E;g{\m})$ associated with the time changed process $\check{\bf X}$. In particular, for $u\in D(\mathscr{E})_{e,\loc}^{\diamond}$, $\mathscr{E}$-subharmonicity of $u$ in $D$ is equivalent to the 
 $\check{\mathscr{E}}$-subharmonicity of $u$ in $D$.  
 \item[\rm(2)] For a nearly Borel $\overline{\R}$-valued function $u$, 
 the subharmonicity of $u$ in $D$ with respect to ${\bf X}$ is equivalent to 
 the subharmonicity of $u$ in $D$ with respect to $\check{\bf X}$.
 \item[\rm(3)] For a nearly Borel $\overline{\R}$-valued function $u$, 
 the subharmonicity of $u$ in $D$ in the weak sense with respect to ${\bf X}$ is equivalent to  the subharmonicity of $u$ in $D$ in the weak sense with respect to $\check{\bf X}$.
 \end{enumerate}
 \end{lem}
 \begin{proof}[\bf Proof.]
 \begin{enumerate}
 \item[(1)] It suffices to show the case $u\in D(\mathscr{E})_{e,\loc}$. 
 Let $\{U_n\}$ be an increasing sequence satisfying $U_n\Subset U_{n+1}\Subset E$ and $\bigcup_{n=1}^{\infty}U_n=E$. 
 Since $u\in D(\mathscr{E})_{e,\loc}$, there exists $u_n\in D(\mathscr{E})_e$ such that $u=u_n$ ${\m}$-a.e.~on $U_n$. Fix $f\in L^1(E;{\m})$ with $0<f\leq 1$ ${\m}$-a.e.~and set $g:=f/(\sup_{n\geq1}u_n^2\lor 1)$. 
 Such an $f$ always exists thanks to the $\sigma$-finiteness of ${\m}$. Then we see 
 $\sup_{n\geq1}\int_Eu_n^2g{\d}{\m}\leq\int_Ef{\d}{\m}<\infty$, in particular, 
 $u_n\in D(\mathscr{E})_e\cap L^2(E;g{\m})=D(\check{\mathscr{E}})_e\cap L^2(E;g{\m})=D(\check{\mathscr{E}})$. For any $U\Subset E$, we can take 
 $U_n$ such that $\overline{U}\subset U_n$. This means $u\in D(\check{\mathscr{E}})_{\loc}$. 
 \item[(2)] Consider the first exit time 
 $\check{\tau}_U:=A_{\tau_U}^g$ from $U$ with respect to $\check{\bf X}$. 
 For $\check{t}=A_t^g$, we see $\check{X}_{\check{t}\land \check{\tau}_U}=X_{t\land \tau_U}$. Then $u(\check{X}_{\check{t}\land \check{\tau}_U})=u(X_{t\land \tau_U})$.
 Moreover, the natural filtration $\check{\mathscr{F}}^0$ with respect to $\check{\bf X}$ is nothing but the natural filtration 
  $\mathscr{F}_t^0$ with respect to ${\bf X}$:
 $\check{\mathscr{F}}^0:=\sigma(\check{X}_{\check{u}}:\check{u}\leq\check{t})
 =\sigma(X_u:u\leq t)=\mathscr{F}_t^0$, because $\check{X}_{\check{t}}=X_{\tau_{\check{t}}}=X_{\tau_{A_t^g}}=X_t$.  
 The $(\check{\mathscr{F}}_t)$-submartingale property with respect to $\check{\bf X}$ of $u(\check{X}_{\check{t}\land \check{\tau}_U})$ is also equivalent to the $({\mathscr{F}}_t)$-submartingale property with respect to ${\bf X}$ of $u(X_{t\land \tau_U})$. Here $(\check{\mathscr{F}}_t)$ is the minimal augmented filtration with respect to $\check{\bf X}$. 
 It is easy to see the equivalence of uniform integrability between 
 $\{u(X_{t\land\tau_U})\}_{t\geq0}$ and $\{u(\check{X}_{\check{t}\land\check{\tau}_U})\}_{\check{t}\geq0}$. 
 \item[(3)] It is easy to see $\check{X}_{\check{\tau}_U}=X_{\tau_U}$. 
 This means the conclusion. 
\end{enumerate}
 \end{proof}
Next theorem is proved in \cite[Theorem~3.11]{CK}. 
\begin{thm}[{\cite[Theorem~3.11]{CK}}]\label{thm:Domain}
Let $D$ be an open set and $u\in L^{\infty}_{\loc}(D;{\m})$. Suppose one of the following holds:
\begin{enumerate}
\item[\rm(1)] $u$ is subharmonic in $D$.
\item[\rm(2)] $u$ is subharmonic in $D$ in the weak sense and \eqref{e:2.3} holds. 
\end{enumerate} 
Then $u\in D(\mathscr{E}_D)_{\loc}$. 
\end{thm}
Next theorem extends \cite[Theorem~3.12]{CK} without assuming 
the local boundedness. 

\begin{thm}\label{thm:3.10*}
Let $D$ be an open set and $u\in D(\mathscr{E})_{e,\loc}^{\diamond}$ satisfy 
 \eqref{eq:2.3}. 
Suppose that one of the following holds: 
\begin{enumerate}
\item[\rm(1)] $u$ is subharmonic in $D$.
\item[\rm(2)] $u$ is subharmonic in $D$ in the weak sense  
and \eqref{e:2.3} holds. 
\end{enumerate}
Then $u$ is $\mathscr{E}$-subharmonic in $D$. 
\end{thm}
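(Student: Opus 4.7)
The plan is to carry out the decomposition $u=h_0+(u-h_0)$ with $h_0(x):=\mathbb{E}_x[\tilde u(X_{\tau_U})]$ and to show $\mathscr{E}(u,v)\le 0$ by analyzing the two pieces separately.  Two structural facts are already in place: by Lemma~\ref{lem:Subharmonicity}, $h_0\in D(\mathscr{E})_{e,\loc}^{\diamond}$ is $\mathscr{E}$-harmonic on $U$; and by Lemma~\ref{lem:lattice2}, $u-h_0\in D(\mathscr{E}_U)_e$.  Hypothesis (1) or (2) will be used only to supply the pointwise inequality $\tilde u\le h_0$ q.e., from which $f:=h_0-u\in D(\mathscr{E}_U)_e$ is non-negative q.e.

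Fix $v\in D(\mathscr{E}_D)_c\cap L^{\infty}(D;\m)_+$ and choose $U$ open with $\supp[v]\subset U\Subset D$, so that $v\in D(\mathscr{E}_U)_c\cap L^{\infty}(U;\m)_+$.  Splitting $v=v^+-v^-$ via Lemma~\ref{lem:contraction}, the $\mathscr{E}$-harmonicity of $h_0$ on $U$ gives $\mathscr{E}(h_0,v)=0$, and Corollary~\ref{cor:welldefinedenergy} yields
\begin{equation*}
 \mathscr{E}(u,v)=\mathscr{E}(u-h_0,v)+\mathscr{E}(h_0,v)=-\mathscr{E}_U(f,v),
\end{equation*}
so the theorem reduces to $\mathscr{E}_U(f,v)\ge 0$.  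The pointwise inequality $\tilde u\le h_0$ q.e.\ follows, in case (1), from optional stopping applied to the uniformly integrable $\mathbb{P}_x$-submartingale $\tilde u(X_{t\wedge\tau_U})$, and in case (2) it is the very definition of subharmonicity on $D$ in the weak sense.

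To conclude, I verify that $f$ is ${\bf X}^U$-excessive.  The Markov property gives $\mathbb{E}_x[h_0(X_t)\1_{\{t<\tau_U\}}]=\mathbb{E}_x[\tilde u(X_{\tau_U})\1_{\{t<\tau_U\}}]$, and a short rearrangement then yields
\[
 P^U_t f(x)=h_0(x)-\mathbb{E}_x[\tilde u(X_{t\wedge\tau_U})]\le h_0(x)-u(x)=f(x),
\]
where the inequality is the submartingale property $\mathbb{E}_x[\tilde u(X_{t\wedge\tau_U})]\ge u(x)$ (direct in case (1); in case (2), supplied by Lemma~\ref{lem:EquivalenceSubharmonicity}(2), which applies because the $\mathscr{E}$-quasi-continuous representative of $u\in D(\mathscr{E})_{e,\loc}^{\diamond}$ is q.e.\ finely continuous, combined with \eqref{e:2.3}).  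Hence $f$ is a non-negative ${\bf X}^U$-excessive function in $D(\mathscr{E}_U)_e$, so the standard potential-theoretic representation produces a positive smooth measure $\mu$ of finite $0$-order energy with $f=U^0\mu$ and $\mathscr{E}_U(f,v)=\int_U\tilde v\,d\mu\ge 0$.

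I anticipate the main obstacle to be this last potential representation step without the local boundedness of $u$.  In \cite[Theorem~3.12]{CK} the analogous step is carried out via the Doob--Meyer decomposition of the uniformly integrable submartingale together with a PCAF construction, both of which rely on the local boundedness assumption made there.  The present extension bypasses that route by working intrinsically in $D(\mathscr{E}_U)_e$: the condition $u\in D(\mathscr{E})_{e,\loc}^{\diamond}$ together with \eqref{eq:2.3} guarantees, via Lemmas~\ref{lem:lattice2} and \ref{lem:Subharmonicity}, that $f$ already lies in $D(\mathscr{E}_U)_e$, so that the general representation of non-negative excessive functions in the extended Dirichlet space applies directly to deliver the sign.
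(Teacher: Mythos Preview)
Your approach is essentially the same as the paper's: both invoke Lemmas~\ref{lem:lattice2} and \ref{lem:Subharmonicity} to obtain $h_0\in D(\mathscr{E})_{e,\loc}^{\diamond}$ $\mathscr{E}$-harmonic on $U$ and $u_0:=h_0-u\in D(\mathscr{E}_U)_e$, then assert that $u_0$ is ${\bf X}_U$-excessive and conclude from there. The paper simply says ``the rest of the proof is the same as in the proof of \cite[Theorem~3.12]{CK}'' at exactly the point where you spell out the excessiveness computation and the potential representation, so your sketch is a faithful unpacking of what the paper defers; your remark that \cite{CK} proceeds via Doob--Meyer and relies on local boundedness for this particular step is a slight overstatement, but it does not affect the correctness of your argument.
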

\begin{proof}[{\bf Proof.}]
By Lemma~\ref{lem:TimeChange}, we may assume that 
$u\in D(\mathscr{E})_{\loc}^{\diamond}\subset L^2_{\loc}(E;{\m})$. 
Recall that ${\bf X}_U$ is transient under \eqref{e:2.3}.  
First we prove the assertion for the case (2). 
For $U\Subset D$, we have 
$\mathbb{E}_x[|u(X_{\tau_U})|]<\infty$ for q.e.~$x\in E$. 
For $U\Subset D$, $u(x)\leq\mathbb{E}_x[u(X_{\tau_U})]$ for q.e.~$x\in U$, which trivially holds for the case (2). 
Since $u\in  D(\mathscr{E})_{e,\loc}^{\diamond}$ satisfies \eqref{eq:2.3}, $\mathbb{E}_{\cdot}[u(X_{\tau_U})]-u\in D(\mathscr{E}_U)_e$ and $h_0:=\mathbb{E}_{\cdot}[u(X_{\tau_U})]\in D(\mathscr{E})^{\diamond}_{e,\loc}$ is $\mathscr{E}$-harmonic in $U$ for $U\Subset D$   
by Lemmas~\ref{lem:lattice2} and \ref{lem:Subharmonicity}.  
Set 
$u_0:=h_0-u\in D(\mathscr{E}_U)_e$ for such $U$. 
Note that $u_0$ is non-negative q.e.~on $U$.  
Take any $O\Subset U\Subset D$.   
By replacing $U$ with $O$, we have $\mathbb{E}_x[u_0(X_{\tau_O})]\leq u_0(x)$ for q.e.~$x\in O$, hence q.e.~on $U$ by taking a properly exceptional set $N$ of ${\bf X}$ and restricting the process ${\bf X}_U$ to ${\bf X}_{U\setminus N}$ if necessary. 
Then $u_0$ is excessive with respect to 
${\bf X}_U$ by use of \cite[Theorem~12.4]{Dyn:Mar}.
The rest of the proof for $\mathscr{E}$-subharmonicity in $D$ is the same as in the proof of 
\cite[Theorem~3.12]{CK} under the transience of ${\bf X}_U$. 
Next we prove the assertion for the case (1). Note that $u$ is automatically q.e.~finely continuous. As in the proof of Lemma~\ref{lem:EquivalenceSubharmonicity}, 
$Y_t:=u(X_{t\land\tau_U})$ for $t\in[0,+\infty[$, $Y_{\infty}:=\xi$ for $t=\infty$ forms a closed $\mathbb{P}_x$-submartingale and $u(x)\leq \mathbb{E}_x[Y_{\tau_U}]=:h_*(x)$. Then 
$Y_{\tau_U}=u(X_{\tau_U})+\xi\1_{\{\tau_U=\infty\}}$ $\mathbb{P}_x$-a.s.~for 
q.e.~$x\in E$. Set $u_2(x):=\mathbb{E}_x[\xi\1_{\{\tau_U=\infty\}}]$. 
Then $h_*(x)=h_0(x)+u_2(x)=\mathbb{E}_x[u(X_{\tau_U})]+u_2(x)$. We now show $u_2\in L^2(U;{\m})$. Indeed, 
\begin{align*}
\int_Uu_2^2{\d}{\m}&\leq \int_U\mathbb{E}_x[|\xi|\1_{\{\tau_U=\infty\}}]^2{\d}{\m}\\
&=\int_U\lim_{n\to\infty}\mathbb{E}_x[|u(X_{n\land\tau_U})|
\1_{\{\tau_U=\infty\}}]^2{\d}{\m}
\\
&\leq\varliminf_{n\to\infty}\int_U\mathbb{E}_x[|u(X_n)|\1_{\{\tau_U=\infty\}}]^2{\d}{\m}
\\
\hspace{4cm}&\leq\varliminf_{n\to\infty}\int_U\mathbb{E}_x[|u(X_n)|\1_{\{n<\tau_U\}}]^2{\d}{\m}
\\
&=\varliminf_{n\to\infty}\int_U(p_n^U(\1_U|u|))^2{\d}{\m}\\
&
=\varliminf_{n\to\infty}\int_Up_n(\1_U|u|^2){\d}{\m}\leq\int_Uu^2{\d}{\m}<\infty.
\end{align*}
On the other hand, we know $p_t^Uu_2=u_2$. This means that 
$u_2\in D(\mathscr{E}_U)$ and $u_2$ is $\mathscr{E}$-harmonic in $U$.
Since $h_*=\mathbb{E}_{\cdot}[u(X_{\tau_U})]+u_2$, $h_*$ is $\mathscr{E}$-harmonic in $U$, because $\mathbb{E}_x[u(X_{\tau_U})]\in D(\mathscr{E})_{e,\loc}^{\diamond}$ is $\mathscr{E}$-harmonic in $U$ as proved above.  
Set $u_0:=h_*-u=\mathbb{E}_{\cdot}[u(X_{\tau_U})]+u_2-u$. Then 
$u_0\in D(\mathscr{E})_{e,\loc}^{\diamond}$ is $\mathscr{E}$-superharmonic in $U$. 
Therefore 
\begin{align*}
\mathscr{E}(u,\phi)&=\mathscr{E}(h_*-u_0,\phi)\\
&=-\mathscr{E}(u_0,\phi)+\mathscr{E}(h_*,\phi)\\
&=-\mathscr{E}(u_0,\phi)\leq0\quad\text{ for }\quad \phi\in D(\mathscr{E}_U)\cap C_c(U)_+.
\end{align*}
Since $U$ is arbitrary, we obtain the $\mathscr{E}$-subharmonicity of $u$ in $D$.
\end{proof}
\begin{remark}
{\rm The proof of Theorem~\ref{thm:3.10*}
fulfills a gap of the proof of \cite[Theorem~3.12]{CK}. 
}
\end{remark}

Combining Lemma~\ref{lem:EquivalenceSubharmonicity}, Theorems~\ref{thm:3.10} and \ref{thm:3.10*}, 
we have the following: 
\begin{thm}\label{thm:main}
Let $D$ be an open set and $u$ a nearly Borel $\overline{\R}$-valued function on $E$. 
Suppose that $u\in D(\mathscr{E})_{e,\loc}^{\diamond}$ satisfies 
\eqref{eq:2.3}. Then the following are equivalent to each other.
\begin{enumerate}
\item[\rm(1)] $u$ is $\mathscr{E}$-subharmonic in $D$. 
\item[\rm(2)] $u$ is subharmonic in $D$.
\end{enumerate}
If we further assume \eqref{e:2.3}, 
 then {\rm(1)} and {\rm(2)} are equivalent to  
\begin{enumerate}
\item[\rm(3)] $u$ is subharmonic in $D$ in the weak sense.
\end{enumerate}
Under {\bf (AC)}, we can replace {\rm(2)} and {\rm(3)}  
with refined statements without exceptional sets, respectively. 
\end{thm}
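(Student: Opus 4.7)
The plan is to assemble Theorem~\ref{thm:main} as a straightforward combination of Theorem~\ref{thm:3.10}, Theorem~\ref{thm:3.10*}, and Lemma~\ref{lem:EquivalenceSubharmonicity}, without any new substantive argument; the role of the statement is largely to package those three results under a single set of hypotheses. The only care needed is in routing each implication through the correct lemma under the correct hypothesis.

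Under the standing hypothesis $u\in D(\mathscr{E})_{\loc}^{\diamond}\subset D(\mathscr{E})_{e,\loc}^{\diamond}$ satisfying \eqref{eq:2.3}, I will first establish $(2)\Longrightarrow(1)$: this is exactly the content of Theorem~\ref{thm:3.10}, which upgrades $\mathscr{E}$-subharmonicity in the sense of Definition~\ref{df:subharmonicity} to subharmonicity in the sense of Definition~\ref{df:stochasticsubharmonicity}. Next, $(1)\Longrightarrow(2)$ is precisely Theorem~\ref{thm:3.10*}(1). This settles the first equivalence.

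For the equivalence with (3) under the extra hypotheses \eqref{e:2.3} and $u\in L^1_{\loc}(D;\m)$, I would close the cycle $(1)\Longrightarrow(3)\Longrightarrow(2)\Longrightarrow(1)$: the first arrow is Lemma~\ref{lem:EquivalenceSubharmonicity}(1), the second is Theorem~\ref{thm:3.10*}(2), and the third comes from the portion already proved above. The statement in the weaker regime $u\in D(\mathscr{E})_{e,\loc}^{\diamond}$ is handled identically, since Theorems~\ref{thm:3.10} and \ref{thm:3.10*} are already stated at the extended Dirichlet space level. The refinements ``without exceptional set'' under \textbf{(AC)} follow by invoking the corresponding refined statements of Theorem~\ref{thm:3.10} and of Lemma~\ref{lem:EquivalenceSubharmonicity}(1)--(2) in place of their q.e.\ counterparts, exploiting the finely continuous (nearly) Borel representative.

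Since the heavy lifting has already been done in the three preparatory results, no step of the present proof is combinatorially hard. The only point I expect to require genuine attention is the bookkeeping between the q.e.\ versions and the everywhere-defined versions under \textbf{(AC)}: one must verify that the various exceptional sets produced by the cited results can be amalgamated into a single properly exceptional set of ${\bf X}$, and that the fine continuity plus (near) Borel measurability hypothesis propagates cleanly through the chain $(1)\Leftrightarrow(3)\Leftrightarrow(2)$. That exceptional-set bookkeeping is the main (but still routine) obstacle.
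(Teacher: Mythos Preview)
Your proposal is correct and matches the paper's approach exactly: the paper presents Theorem~\ref{thm:main} as an immediate combination of Lemma~\ref{lem:EquivalenceSubharmonicity}, Theorem~\ref{thm:3.10}, and Theorem~\ref{thm:3.10*}, with no additional argument beyond citing these three results. Your routing of the individual implications through the appropriate lemma is accurate and in fact more explicit than what the paper itself provides.
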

\begin{proof}[\bf Proof.]
The implications (1)$\Longleftrightarrow$(2)$\Longleftarrow$(3) holds by 
Theorems~\ref{thm:3.10} and \ref{thm:3.10*}.  
The implication (2)$\Longrightarrow$(3) follows from Lemma~\ref{lem:EquivalenceSubharmonicity} under 
\eqref{e:2.3} and $u\in L^1_{\loc}(D;{\m})$. 
It suffices to prove (2)$\Longrightarrow$(3) 
under \eqref{e:2.3} 
without assuming $u\in L^1_{\loc}(D;{\m})$. 
Suppose that $u$ is subharmonic in $D$. 
Note that the condition 
\eqref{e:2.3} is invariant under the time change. 
Indeed,  \eqref{e:2.3} with respect to ${\bf X}$ 
is equivalent to the exceptionality with respect to ${\bf X}$ of 
the set $N:=\{x\in U\mid\mathbb{P}_x(\tau_U=\infty)=1\}$ . 
For $x\in N$, the sample path is always included in $U$. 
Since the shape of the sample does not change under the time change by 
$A_t^g$, we have that 
$N=\{x\in U\mid\mathbb{P}_x(\check{\tau}_U=\infty)=1\}$ is 
an exceptional with respect to $\check{\bf X}$. This means that \eqref{e:2.3} 
holds for $\check{\bf X}$.  The converse assertion also holds. 
For $u\in D(\mathscr{E})_{e,\loc}^{\diamond}$, there exists 
$g\in L^1(E;{\m})$ satisfying $0<g\leq1$ ${\m}$-a.e. such that 
$u\in D(\check{\mathscr{E}})_{\loc}^{\diamond}$ by Lemma~\ref{lem:TimeChange}(1). In particular, 
$u\in L^1_{\loc}(D;g{\m})$. Since $u\in L^1_{\loc}(D;g{\m})$ and \eqref{e:2.3}  holds with respect to $\check{\bf X}$, $u$ is subharmonic in $D$ in the weak sense 
with respect to $\check{\bf X}$ in view of Lemma~\ref{lem:EquivalenceSubharmonicity}.    
Thus, we obtain that $u$ is subharmonic in $D$ in the weak sense  
 with respect to ${\bf X}$ by  Lemma~\ref{lem:TimeChange}(3), i.e., 
 (3) holds. 
\end{proof}
\begin{cor}\label{cor:main0}
Let $D$ be an open set and $u$ a nearly Borel $\overline{\R}$-valued function on $E$. 
Suppose that $u\in D(\mathscr{E})_{\loc}^{\diamond}$ satisfies \eqref{eq:2.3}. Then the same conclusion 
as in Theorem~\ref{thm:main} holds.
\end{cor}
\begin{cor}\label{cor:main}
Let $D$ be an open set and $u$ a nearly Borel $\overline{\R}$-valued function on $E$. 
Assume $J=0$. 
Suppose $u\in D(\mathscr{E})_{\loc}$. 
Then the same conclusion as in Theorem~\ref{thm:main} holds.
Under {\bf (AC)}, we can replace {\rm(2)} and {\rm(3)}
with refined statements without exceptional sets, respectively.
\end{cor}

Next corollary is an improvement of \cite[Corollary~2.10]{CK} without assuming the local boundedness.
\begin{cor}\label{cor:lattice}
Let $D$ be an open subset of $E$. 
\begin{enumerate}
\item[\rm(1)] Let $\eta\in C^1(\R)$ be 
a convex function and $u\in D(\mathscr{E
})_{e,\loc}^{\diamond}$ be an $\mathscr{E}$-harmonic function in $D$ satisfying 
\eqref{eq:2.3}. Suppose that $\eta$ has a bounded first derivative or $u$ is bounded on $E$. Then $\eta(u)\in D(\mathscr{E})_{e,\loc}^{\diamond}$ and is $\mathscr{E}$-subharmonic in $D$ satisfying \eqref{eq:2.3}. If further $u\in D(\mathscr{E
})_{\loc}^{\diamond}$, then $\eta(u)\in D(\mathscr{E})_{\loc}^{\diamond}$.  
\item[\rm(2)] Let $\eta\in C^1(\R)$ be 
an increasing convex function and $u\in D(\mathscr{E
})_{e,\loc}^{\diamond}$ be an $\mathscr{E}$-subharmonic function in $D$ satisfying \eqref{eq:2.3}. Suppose that $\eta$ has a bounded first derivative or $u$ is bounded on $E$. Then the same conclusion of {\rm(1)} also holds.  
\item[\rm(3)] If $u\in D(\mathscr{E})_{e,\loc}^{\diamond}$ {\rm(}resp.~$u\in D(\mathscr{E})_{\loc}^{\diamond}${\rm)} is an $\mathscr{E}$-harmonic function in $D$ satisfying \eqref{eq:2.3}, then  
$|u|\in  D(\mathscr{E})_{e,\loc}^{\diamond}$ {\rm(}resp.~$|u|\in D(\mathscr{E})_{\loc}^{\diamond}${\rm)} 
satisfies \eqref{eq:2.3} and $|u|$ is $\mathscr{E}$-subharmonic in $D$. 
Let $p>1$ and $u\in D(\mathscr{E})_{e,\loc}^{\diamond}$ be an $\mathscr{E}$-harmonic function in $D$ satisfying \eqref{eq:2.3}.  
Assume that \eqref{e:2.3} holds.
Suppose that $|u|^p\in  D(\mathscr{E})_{e,\loc}^{\diamond}$ and it  
satisfies \eqref{eq:2.3}. 
Then $|u|^p$ is $\mathscr{E}$-subharmonic in $D$.
\item[\rm(4)]
Suppose that $u_1,u_2\in D(\mathscr{E})^{\diamond}_{e,\loc}$ satisfy 
\eqref{eq:2.3} and they are $\mathscr{E}$-subharmonic in $D$. 
Then $u_1\lor u_2\in D(\mathscr{E})^{\diamond}_{e,\loc}$ also satisfy \eqref{eq:2.3} and it is $\mathscr{E}$-subharmonic in $D$. 
The same assertion also holds for $u_1,u_2\in D(\mathscr{E})^{\diamond}_{\loc}$ satisfying \eqref{eq:2.3}. 
\end{enumerate}
\end{cor}
\begin{proof}[{\bf Proof.}]
\begin{enumerate}
\item[\rm(1)] By Theorem~\ref{thm:3.10}, for $U\Subset D$, $\{u(X_{t\land\tau_U})\}_{t\geq0}$ is a uniformly integrable $\mathbb{P}_x$-martingale for q.e.~$x\in E$. First assume that $\eta$ has a bounded derivative. 
Since $|\eta(t)-\eta(s)|\leq\sup_{\ell\in\R}|\eta'(\ell)|\cdot|t-s|$ for $t,s\in\R$, we see $\eta(u)\in D(\mathscr{E})_{e,\loc}^{\diamond}$. 
Indeed, for $V$ with $U\Subset V\Subset D$, there exists 
$u_V\in D(\mathscr{E})_e$ and $u=u_V$ ${\m}$-a.e.~on $V$. Then $\eta(u_V)-\eta(0)\in 
D(\mathscr{E})_e$. Take $\phi_V\in D(\mathscr{E})\cap C_c(E)$ such 
that $\phi_V=1$ on $V$ and set $\eta(u)_V:=\eta(u_V)-\eta(0)+\eta(0)\phi_V$. 
Then $\eta(u)_V\in D(\mathscr{E})_e$ and $\eta(u)=\eta(u)_V$ ${\m}$-a.e.~on $V$. Moreover, $\eta(u)$ satisfies \eqref{eq:2.3} in the following sense: 
\begin{align}
\1_U\mathbb{E}_{\cdot}[|\eta(\tilde{u})-\wt{\eta(u)_V}|(X_{\tau_U})]\in D(\mathscr{E}_U)_e.\label{eqq:(2.3)}
\end{align}
Indeed, 
\begin{align*}
\1_U\mathbb{E}_{\cdot}&[|\eta(\tilde{u})-\wt{\eta(u)_V}|(X_{\tau_U})]\\&\leq 
\1_U\mathbb{E}_{\cdot}[|\eta(\tilde{u})-\eta(\wt{u_V})|(X_{\tau_U})]+
\1_U\mathbb{E}_{\cdot}[|1-\phi_V|(X_{\tau_U})]\\
&\leq 
\sup_{\ell\in\R}|\eta'(\ell)|\1_U\mathbb{E}_{\cdot}[|\tilde{u}-\wt{u_V}|(X_{\tau_U})]+
\1_U\mathbb{E}_{\cdot}[|1-\phi_V|(X_{\tau_U})].
\end{align*}
The first term of the right hand side above belongs to $D(\mathscr{E}_U)_e$ 
by assumption. 
The second term of the right hand side above belongs to $D(\mathscr{E}_U)_e$ 
by \cite[Lemma~2.3]{Chen}. Since the left hand side above is excessive with respect to ${\bf X}_U$, we have \eqref{eqq:(2.3)} by \cite[Lemma~3.3]{CK}.   
Meanwhile, 
$|\eta(u)|\leq\sup_{\ell\in\R}|\eta'(\ell)|\cdot|u|+\eta(0)$ yields that 
$\{\eta(u)(X_{t\land\tau_U})\}_{t\geq0}$ is a uniformly 
integrable $\mathbb{P}_x$-submartingale for q.e.~$x\in U$ by Jensen's inequality, i.e., $\eta(u)$ is subharmonic in $D$. 
Therefore $\eta(u)$ is $\mathscr{E}$-subharmonic in $D$ satisfying 
\eqref{eq:2.3} by Theorem~\ref{thm:main}. 
Next we assume the boundedness of $u$ on $E$. Then $\eta(u)\in D(\mathscr{E})_{e,\loc}^{\diamond}$ is bounded on $E$ and it satisfies 
\eqref{eq:2.3} by Proposition~\ref{prop:Sufficient}. 
The rest of the proof is similar as above. 
\item[\rm(2)] The proof is the same as that for (1). 
\item[\rm(3)] Since $u\in D(\mathscr{E})_{e,\loc}^{\diamond}$ satisfies \eqref{eq:2.3}, $|u|\in  D(\mathscr{E})_{e,\loc}^{\diamond}$ also satisfies \eqref{eq:2.3}, because 
\begin{align*}
\1_U\mathbb{E}_{\cdot}[||\tilde{u}|-|\wt{u_V}||(X_{\tau_U})]&\leq \1_U\mathbb{E}_{\cdot}[|\tilde{u}-\wt{u_V}|(X_{\tau_U})]\in D(\mathscr{E}_U)_e
\end{align*}
implies $\1_U\mathbb{E}_{\cdot}[||\tilde{u}|-|\wt{u_V}||(X_{\tau_U})]\in D(\mathscr{E}_U)_e$ by the excessiveness of the left hand side with respect to ${\bf X}_U$ and \cite[Lemma~3.3]{CK}.  
By Theorem~\ref{thm:main}, $u$ is harmonic in $D$, hence $|u|$ is subharmonic in $D$. Applying Theorem~\ref{thm:main} again, $|u|$ is $\mathscr{E}$-subharmonic in $D$. 
Next we prove the latter assertion. 
By assumption, 
\eqref{e:2.3} holds, 
$|u|^p\in D(\mathscr{E})_{e,\loc}^{\diamond}$ holds for $p>1$
and it satisfies \eqref{eq:2.3}. 
Precisely, for $U\Subset V\Subset E$ there exists $v_{p,V}\in D(\mathscr{E})_e$ such that 
$|u|^p=v_{p,V}$ ${\m}$-a.e.~on $V$ and 
$\1_U\mathbb{E}_{\cdot}[||\tilde{u}|^p-\wt{v_{p,V}}|(X_{\tau_U})]\in D(\mathscr{E}_U)_e$. 
From this, $\mathbb{E}_x[|\tilde{u}|^p(X_{\tau_U})]<\infty$ q.e.~$x\in E$, 
since $\mathbb{E}_{\cdot}[|\wt{v_{p,V}}|(X_{\tau_U})]\in D(\mathscr{E})_e$ 
by \cite[Theorem~4.6.5]{FOT} yields $\mathbb{E}_x[|\wt{v_{p,V}}|(X_{\tau_U})]<\infty$ q.e.~$x\in E$. 
By Theorem~\ref{thm:main}, $u$ is harmonic in $D$ in the weak sense, hence 
$|u|^p$ is subharmonic in $D$ in the weak sense by Jensen's inequality. 
Applying Theorem~\ref{thm:main} again, $|u|^p$ is 
$\mathscr{E}$-subharmonic in $D$.    
\item[\rm(4)]
If both $u_1$ and $u_2$ are subharmonic, then so is $u_1\lor u_2$. 
The proof follows Lemma~\ref{lem:lattice1} and  Theorem~\ref{thm:main}.
\end{enumerate}
\end{proof}
\section{Strong Maximum Principle}\label{sec:strongmaxprin}
The following strong maximum principle slightly generalizes \cite[Theorem~2.11]{CK} without assuming the local boundedness for $\mathscr{E}$-subharmonic functions in  $D$. 
Our condition for the $\mathscr{E}$-subharmonic function $u$ in $D$  below only requires that for any open sets $U\Subset V\Subset D$
\eqref{eq:finite} and \eqref{eq:2.3} hold. 
In \cite[Theorem~2.11]{CK}, instead of \eqref{eq:finite}, 
\eqref{eq:finite*} 
is assumed. 
Note that if $u$ is locally bounded on $D$, then \eqref{eq:finite} and \eqref{eq:finite*} are equivalent.  
\begin{thm}[{Strong Maximum Principle}]\label{thm:strongMax}
Assume that ${\bf (AC)}$ holds. 
Let $D$ be an open set. Assume that the part space $(\mathscr{E}_D,D(\mathscr{E}_D))$ on $L^2(D;\m)$ is irreducible.   
Suppose that $u\in D(\mathscr{E})_{\loc}^{\diamond}$ satisfying \eqref{eq:2.3} is a finely continuous $\mathscr{E}$-subharmonic function in $D$. 
If $u$ attains a maximum at a point $x_0\in D$, then $u^+\equiv u^+(x_0)$ on $D$. If in addition $\kappa(D)=0$, then $u\equiv \tilde{u}(x_0)$ on $D$.
\end{thm}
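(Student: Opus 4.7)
The strategy is to pass to the probabilistic side via Theorem~\ref{thm:main} and propagate the identity $u^{+} = u^{+}(x_0)$ along trajectories of $\mathbf{X}$. First, reduce to $u^{+} = u \vee 0$: Lemma~\ref{lem:contraction} yields $u^{+} \in D(\mathscr{E})_{\loc}^{\diamond}$, Lemma~\ref{lem:lattice1} (applied with the constant $0$, which satisfies \eqref{eq:2.3} trivially) gives that $u^{+}$ also satisfies \eqref{eq:2.3}, and Corollary~\ref{cor:lattice} shows $u^{+}$ is $\mathscr{E}$-subharmonic on $D$; fine continuity is inherited from $u$. Setting $M := u(x_0)$ and $M^{+} := u^{+}(x_0) = M \vee 0$, and choosing $U \Subset D$ with $x_0 \in U$, Theorem~\ref{thm:main} under $\mathbf{(AC)}$ gives that $t \mapsto u^{+}(X_{t \wedge \tau_U})$ is a uniformly integrable right-continuous $\mathbb{P}_y$-submartingale for every $y \in E$; since $u \leq M$ on $E$ and hence $u^{+} \leq M^{+}$ on $E$, the chain
\begin{equation*}
M^{+} = u^{+}(x_0) \leq \mathbb{E}_{x_0}\bigl[u^{+}(X_{t \wedge \tau_U})\bigr] \leq M^{+}
\end{equation*}
collapses to equality, forcing $u^{+}(X_{t \wedge \tau_U}) = M^{+}$ $\mathbb{P}_{x_0}$-a.s. for every $t \geq 0$. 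By right-continuity of paths together with fine continuity of $u^{+}$, the trajectory of $\mathbf{X}$ started at $x_0$ stays in $B := \{y \in D : u^{+}(y) = M^{+}\}$ until $\tau_U$.

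The main obstacle is promoting this orbit-wise identity to $B = D$. The submartingale argument above applies verbatim with any point of $B$ replacing $x_0$, so $B$ is invariant along trajectories of $\mathbf{X}$ that remain inside $D$. Using $\mathbf{(AC)}$, the laws $P_t(y,\cdot)$ are absolutely continuous with respect to $\m$, so for any $y \in B$ and any $U' \Subset D$ with $y \in U'$, the set $U' \setminus B$ carries no mass under $\mathbf{1}_{\{t < \tau_{U'}\}} P_t(y,\cdot)$ for $t > 0$; combined with fine continuity of $u^{+}$, this upgrades to the assertion that $B$ is both finely open and finely closed in $D$. Connectedness of $D$ (componentwise, if needed) then forces $B = D$, giving $u^{+} \equiv M^{+}$ on $D$.

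Finally, when $\kappa(D) = 0$, the constant $M$ is $\mathscr{E}$-harmonic on $D$ because $\mathscr{E}(M, v) = M \int_D \tilde v \, d\kappa = 0$ for every admissible $v$. Hence $M - u \in D(\mathscr{E})_{\loc}^{\diamond}$ (with \eqref{eq:2.3}) is $\mathscr{E}$-superharmonic on $D$, nonnegative, finely continuous, and vanishes at $x_0$. Applying Theorem~\ref{thm:main} to the $\mathscr{E}$-subharmonic function $u - M$ yields that $t \mapsto (M - u)(X_{t \wedge \tau_U})$ is a nonnegative $\mathbb{P}_y$-supermartingale; the analogue of the inequality chain at $x_0$ gives $(M - u)(X_{t \wedge \tau_U}) = 0$ $\mathbb{P}_{x_0}$-a.s., and repeating the spreading argument above yields $M - u \equiv 0$ on $D$, i.e., $u \equiv \tilde u(x_0)$ on $D$.
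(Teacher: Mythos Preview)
Your argument follows the same probabilistic route as the paper (pass to the stochastic characterization via Theorem~\ref{thm:main} and propagate the maximum), but there is a genuine gap at the final ``connectedness'' step. You argue that $B=\{u^{+}=M^{+}\}$ is finely open and finely closed in $D$ and then conclude $B=D$ by ``connectedness of $D$ (componentwise, if needed)''. Ordinary topological connectedness of $D$ is not sufficient here: a set that is finely clopen in $D$ need not be clopen for the metric topology (this fails already for jump processes), so metric connectedness of $D$ does not force $B=D$. What is required is that the \emph{fine} topology of $\mathbf{X}_D$ on $D$ be connected, and this is an additional input you never supply.

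The paper deals with both of these points differently. Rather than working directly with $u^{+}$, it introduces the bounded nonnegative function $v:=u^{+}(x_0)-u^{+}=(u^{+}(x_0)-u)\wedge u^{+}(x_0)$ via Corollary~\ref{cor:lattice}, obtains the supermartingale inequality $p_t^{U}v\le v$, and passes through an exhaustion $U_n\uparrow D$ to show that $v$ is excessive for $\mathbf{X}_D$. Then $\1_Y$, with $Y=\{v>0\}$, is the increasing limit of the excessive functions $nv\wedge1$, hence itself excessive, hence finely continuous, so $Y$ is finely clopen. This excessiveness route is what produces fine clopenness cleanly; your version via $\mathbf{(AC)}$ and $p_t^{U'}(y,\cdot)$ can be salvaged but is only sketched (fine continuity of $u^{+}$ gives that $B$ is finely closed, and the submartingale identity at each $y\in B$ gives $\sigma_{D\setminus B}\ge\tau_{U'}>0$ under $\P_y$, hence $B$ is finely open; you should write this out). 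For the decisive last step the paper invokes irreducibility of the part form $(\mathscr{E}_D,D(\mathscr{E}_D))$ together with \cite[Theorem~5.3]{Kw:maximumprinciple}, which says that irreducibility forces the fine topology on $D$ to be connected; since $x_0\in D\setminus Y$, this yields $Y=\emptyset$. This irreducibility hypothesis is precisely the missing ingredient in your proof.
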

\begin{proof}[{\bf Proof.}]
We show the proof for readers' convenience.  
Since $u^+(x_0) \ge 0$ and $\1 \in 
D(\mathscr{E})_{\loc}^{\diamond}$ is $\mathscr{E}$-superharmonic in $D$, then $u^+(x_0) - u \in 
D(\mathscr{E})_{\loc}^{\diamond}$ is a finely continuous and Borel measurable, non-negative, $\mathscr{E}$-superharmonic function in $D$. By Corollary~\ref{cor:lattice}, 
$v := u^+(x_0) - u^+ = (u^+(x_0) - u) \wedge u^+(x_0)\in D(\mathscr{E})_{\loc}^{\diamond}$ 
is a bounded non-negative $\mathscr{E}$-superharmonic function 
in $D$. We emphasize here that $u^+(x_0) - u$ is not necessarily 
locally bounded on $D$, because $u$ is not necessarily locally bounded below on $D$. So we cannot apply \cite[Corollary~2.10(iv)]{CK}.  
By \cite[Theorem~3.10]{CK} or Theorem~\ref{thm:3.10}, for any $U\Subset D$, $t\mapsto v(X_{t\land \tau_U})$ is a uniformly integrable non-negative $\mathbb{P}_x$-supermartingale for q.e.~$x\in E$. In particular, 
\begin{align*}
\mathbb{E}_x[v(X_{t\land\tau_U})]\leq v(x)\quad\text{ q.e.~}x\in D.
\end{align*}
Thanks to the non-negativity of $v$, 
\begin{align*}
\mathbb{E}_x[v(X_t):t<\tau_U]\leq v(x)\quad{\m}\text{-a.e.~}x\in U. 
\end{align*}
That is,  
\begin{align*}
p_t^Uv(x)\leq v(x)\quad {\m}\text{-a.e.}\quad x\in U.
\end{align*}
Since $v$ is bounded on $U$, we see $v\in L^2(U;{\m})$. 
Hence, for $\beta>0$  
\begin{align*}
\beta G_{\beta}^Uv(x)\leq v(x)\quad {\m}\text{-a.e.}\quad x\in U,
\end{align*}
where $G_{\beta}^U:=\int_0^{\infty}e^{-\beta t}P_t^U{\d} t$\; ($\beta>0$) is the $L^2(U;{\m})$-resolvent operators associated to $(P_t^U)_{t\geq0}$. 
From this, we can deduce 
\begin{align*}
\beta R_{\beta}^Uv(x)\leq v(x)\quad \text{ for all }\quad x\in U, 
\end{align*}
because $R_{\beta}^Uv:=\int_0^{\infty}e^{-\beta t}p_t^U\,v\,{\d} t$,  $v$ are finely continuous on $U$ and 
${\m}$ has full fine support with respect to ${\bf X}_U$ under {\bf (AC)}. 
Note here that ${\bf X}_U$ also satisfies {\bf (AC)}, because 
$\mathbb{P}_x(X_t\in A,t<\tau_U)\leq\mathbb{P}_x(X_t\in A)=0$ under ${\m}(A)=0$ for all $A\in\mathscr{B}(E)$, $x\in U$ and $t>0$. 
Taking an increasing sequence $\{U_n\}$ such that $U_n\Subset U_{n+1}\Subset D$ for all $n\in\mathbb{N}$ and $\bigcup_{n=1}^{\infty}U_n=D$. Then $\mathbb{P}_x(\lim_{n\to\infty}\tau_{U_n}=\tau_D)=1$ for all $x\in D$ (see \cite[Theorem~4.6(iv)]{Kw:maximumprinciple}). Replacing $U$ with $U_n$ and letting $n\to\infty$, we have 
\begin{align*}
\beta R_{\beta}^Dv(x)\leq v(x)\quad \text{ for all }\quad x\in D. 
\end{align*}
Since $v$ is a finely continuous Borel measurable function on $D$, 
$[0,\tau_D[\ni t\mapsto v(X_t)$ is right continuous $\mathbb{P}_x$-a.s. for all $x\in D$ (see \cite[Theorem~A.2.5]{FOT}). This yields that $v$ is excessive with respect to $\textbf{X}_D$ (see \cite[(4.11) Definition and (4.15) Exercise (v)]{Sharpe}. 
Set $Y \coloneqq \{ x \in D \;\mid\; v(x) > 0\}$ and let $(p_t^D)_{t \ge 0}$ be the transition semigroup of ${\bf X}_D$.
Since for $x \in D$, $p_t^D\1_Y(x) = \uparrow\lim_{n \to +\infty}p_t^D(nv \wedge 1)(x) \leq \1_Y(x)$, $\1_Y$ is also excessive with respect to $\textbf{X}_D$ and thus finely continuous which makes $Y$ simultaneously finely open and finely closed (see \cite{Kw:strongmax}). 
However, owing to \cite[Theorem~5.3]{Kw:maximumprinciple}, the irreducibility of $(\mathscr{E}_D, D(\mathscr{E}_D))$ implies  the connectedness of the fine topology induced on $D$ by $\textbf{X}_D$.   
Since $x_0 \in D \setminus Y$ then $Y = \emptyset$ hence $u^+ \equiv u^+(x_0)$. 

If $\kappa(D)=0$, it is enough to observe that $\1 \in 
D(\mathscr{E})_{\loc}^{\diamond}$ is $\mathscr{E}$-harmonic in $D$ hence $u(x_0)- u \in 
D(\mathscr{E})_{\loc}^{\diamond}$ is a non-negative, $\mathscr{E}$-superharmonic function.
The conclusion follows in a similar fashion as in the above. 
\end{proof}
\begin{remark}
{\rm The irreducibility assumption in Theorem~\ref{thm:strongMax} for $(\mathscr{E}_D,D(\mathscr{E}_D))$ on $L^2(D;{\m})$ holds under that ${\bf X}$ is a strong Feller process and 
$D$ is a connected open set (see \cite{Tak:exit}). 
}
\end{remark}
\section{Examples}\label{sec:Example}
\begin{example}[Brownian Motion on $\R^d$]\label{ex:BM}
\rm Let
${\bf X}^{\rm w}=(\Omega, B_t, {\P}_x)_{x\in\R^d}$ be
a $d$-dimensional Brownian motion on $\R^d$.  
The corresponding Dirichlet form $(\mathscr{E},D(\mathscr{E}))$ on $L^2(\R^d)$ is given by 
\begin{align*}
\left\{\begin{array}{rl} D(\mathscr{E})\!\!&=\;\;H^1(\R^d), \\
\mathscr{E}(f,g)\!\!&=\;\;\frac12\mathbb{D}(f,g),\quad f,g\in H^1(\R^d),\end{array}\right.
\end{align*}
where 
\begin{align*}
\left\{\begin{array}{rl}
H^1(\R^d)&:=\{f\in L^2(\R^d)\mid \frac{\partial f}{\partial x_i}\in L^2(\R^d), i\in\{1,\cdots,d\}\}, \\
\mathbb{D}(f,g)&:=
\int_{\R^d}\langle \nabla f(x),\nabla g(x)\rangle{\d} x,\quad f,g\in H^1(\R^d)
\end{array}\right.
\end{align*}
with 
$\nabla f(x):=(\frac{\partial f}{\partial x_1},\cdots, \frac{\partial f}{\partial x_d})$.  
Here the derivative $\frac{\partial f}{\partial x_i}$ is regarded as   distributional sense. We see $D(\mathscr{E})_{\loc}=D(\mathscr{E})_{\loc}^{\diamond}=D(\mathscr{E})_{\loc}^{\dag}(=H^1_{\loc}(\R^d))$. We assume $d\geq2$. 
Now we consider a special 
$\overline{\R}$-valued function 
$u$ on $\R^d$ defined by 
\begin{align*}
u(x):=|x|^{-\alpha} \quad (x\in\R^d),\qquad \alpha>0.
\end{align*}
Note that $u$ is not locally bounded around origin. 
It is easy to see that $u\in D(\mathscr{E})_{\loc}$ if and only if 
 $d>2(\alpha+1)$. Moreover, the subharmonicity 
 (resp.~superharmonicity) of 
 $u$ on $\R^d\setminus\{0\}$ in the classical sense is equivalent to $d\leq \alpha+2$ (resp.~$d\geq\alpha+2$), in 
 particular, the harmonicity of $u$ on $\R^d\setminus\{0\}$ in the classical sense is equivalent to $d=\alpha+2$. So we can deduce that 
 $u\in D(\mathscr{E})_{\loc}$ is always $\mathscr{E}$-superharmonic, since $d>2\alpha+2>\alpha+2$.
 More precisely, by integration by parts, for any non-negative 
 $\varphi\in C_c^{\infty}(\R^d)$, 
 \begin{align*}
 \mathscr{E}(u,\varphi)&=\frac12\mathbb{D}(u,\varphi)
 =\frac12\sum_{i=1}^d\int_{\R^d}\frac{\partial u}{\partial x_i}(x)
 \frac{\partial \varphi}{\partial x_i}{\d} x=\frac12\sum_{i=1}^d\int_{\R^{d-1}\setminus\{0\}}\left(\int_{\R\setminus\{0\}}\frac{\partial u}{\partial x_i}(x)
 \frac{\partial \varphi}{\partial x_i}{\d} x_i \right){\d} \overline{x}_i
\\
 &=\frac12\sum_{i=1}^d\int_{\R^{d-1}\setminus\{0\}}\left(-
 \lim_{M\to\infty,\eps\to0+}\left[\alpha|x|^{-\alpha-2}x_i
 \varphi(x) \right]_{\eps}^{M}
 -\lim_{N\to-\infty,\delta\to0-}\left[\alpha|x|^{-\alpha-2}x_i
 \varphi(x) \right]^{\delta}_{N}\right.\\
 &\hspace{5cm}\left.
 +\alpha\int_{\R\setminus\{0\}}
 (|x|^2-(\alpha+2)x_i^2)|x|^{-\alpha-4}
 \varphi(x){\d} x_i
 \right){\d} \overline{x}_i\\
 &=\frac{\alpha(d-\alpha-2)}{2}\int_{\R^d}|x|^{-\alpha-2}\varphi(x){\d} x\geq0,\quad \overline{x}_i=(x_1,\cdots, \check{x}_i,\cdots, x_d),
  \end{align*}  
  where  
 we use the continuity of 
 $x_i\mapsto x_i/|x|^{\alpha+2}$ at $0\in\R$ under 
 $d\geq2$ and $\overline{x}_i\ne0$.
By Theorem~\ref{thm:main}, we have the following: 
\begin{enumerate}
\item\label{item:superharmstable1a} $u$ is superharmonic in $\R^d$;
\item\label{item:superharmstable2a}
For  every relatively compact open subset $U$ of $\R^d$, $u(X_{\tau_U})\in L^1(\P_x)$ and
$u(x)\geq\E_x[u(X_{\tau_U})]$ for q.e.~$x\in U$.
\end{enumerate}
 \end{example}
\begin{example}[Symmetric Relativistic $\alpha$-stable Process]\label{ex:relastable}
\rm
 This example is also exposed in \cite[Example~4.1]{CK}. 
Though the statement in \cite[Example~4.1]{CK} is correct, but its 
argument is not so complete. So we expose it with a complete argument.     

Take $\alpha\in]0,2[$ and $m\geq0$.
Let ${\bf X}^{\text{\tiny\rm R},\alpha}=(\Omega,X_t,\P_x)_{x\in\R^d}$ be a
L\'evy process on $\R^d$ with
  \[ \E_{0}\bigl[e^{i \langle  \xi, X_{t}  \rangle}\bigr]=
e^{-t((|\xi|^2+m^{2/\alpha })^{\alpha/2}-m)}.\]
If $m>0$, it is called the \emph{relativistic $\alpha$-stable process with mass $m$} (see \cite{Ryz:relativisticstable}).
In particular, if $\alpha=1$ and $m>0$, it is called the \emph{relativistic free Hamiltonian process} (see \cite{HS:PP}).
When $m=0$, ${\bf X}^{\text{\tiny\rm R},\alpha}$ is nothing but the usual \emph{symmetric
$\alpha$-stable process}.
Let $(\mathscr{E}^{\text{\tiny\rm R},\alpha},
D(\mathscr{E}^{\text{\tiny\rm R},\alpha}))$
be the Dirichlet form on $L^2(\R^d)$ associated with 
${\bf X}^{\text{\tiny\rm R},\alpha}$.
Using Fourier transform $\wh{f}(x):=\frac{1}{(2\pi)^{d/2}}\int_{\R^d}e^{i\langle x,y\rangle}f(y){\d} y$, it follows
from Example~1.4.1 of \cite{FOT} that
\begin{align*}
\begin{cases}
D(\mathscr{E}^{\text{\tiny\rm R},\alpha}) :=\displaystyle{\left\{f\in L^2(\R^d)\;\Bigl|\; \int_{\R^d}
|\wh{f}(\xi)|^2\left((|\xi|^2+m^{2/\alpha})^{\alpha/2} -m \right){\d}\xi<\infty  \right\}},
 \\
\mathscr{E}^{\text{\tiny\rm R},\alpha}(f,g) :=\displaystyle{\int_{\R^d}
\wh{f}(\xi)\bar{\wh{g}}(\xi)\left((|\xi|^2+m^{2/\alpha})^{\alpha/2} -m \right){\d}\xi \quad\text{ for }f,g\in D(\mathscr{E}^{\text{\tiny\rm R},\alpha}}).
\end{cases}
\end{align*}
It is shown in \cite{CS3} that the corresponding jumping measure
satisfies
\begin{align*}
J({\d} x{\d} y)=\frac{c(x,y)}{|x-y|^{d+\alpha}}{\d} x{\d} y\ \ \text{ with }\ \ c(x,y):=\frac{A(d,-\alpha)}{2}
\Psi(m^{1/\alpha}|x-y|),
\end{align*}
where $A(d,-\alpha)=\frac{\alpha 2^{d+\alpha}\Gamma(\frac{d+\alpha}{2})}{2^{d+1}\pi^{d/2}\Gamma(1-\frac{\alpha}{2})}$, and the function $\Psi$ on $[0,\infty[$ is given by
$\Psi(r):=I(r)/I(0)$ with $I(r):=\int_0^{\infty}s^{\frac{d+\alpha}{2}-1}e^{-\frac{s}{4}-\frac{r^2}{s}}{\d} s$.
In particular,
we have
\begin{align*}
\left\{\begin{array}{cc}D(\mathscr{E}^{\text{\tiny\rm R},\alpha})&\hspace{-2cm}=\displaystyle{\left\{f\in L^2(\R^d)\;\Bigl|\;\int_{\R^d\times\R^d}|f(x)-f(y)|^2\frac{c(x,y)}{|x-y|^{d+\alpha}}{\d} x{\d} y<\infty  \right\}}, \\
\mathscr{E}^{\text{\tiny\rm R},\alpha}(f,g)&=\displaystyle{\int_{\R^d\times\R^d}
(f(x)-f(y))(g(x)-g(y))\frac{c(x,y)}{|x-y|^{d+\alpha}}{\d} x{\d} y
\quad\text{ for }f,g\in D(\mathscr{E}^{\text{\tiny\rm R},\alpha})}.
\end{array}\right.
\end{align*}
Note that
$\Psi$ is decreasing and satisfies
$\Psi(r)\asymp e^{-r}(1+r^{(d+\alpha-1)/2})$ near $r=\infty$, and $\Psi(r)=1+\Psi''(0)r^2/2+o(r^4)$ near $r=0$.  Here $f(r)\asymp g(r)$ 
near $r=\infty$ 
for positive functions $f,g$ on $[0,+\infty[$ means  
$f(r)=O(g(r))$ and $g(r)=O(f(r))$ as $r\to\infty$, more precisely 
there exist $C,r_0>0$ independent of $r>0$ satisfying $C^{-1}\leq f(r)/g(r)\leq C$ for any 
$r\in[r_0,+\infty[$ with some $r_0>0$.
More strongly, there exists $C_i=C_i(d,\alpha,m)>0$ $(i=1,2)$ such that 
\begin{align}
C_1e^{-r}(1+r^{\frac{d+\alpha-1}{2}})\leq \Psi(r)\leq C_2e^{-r}(1+r^{\frac{d+\alpha-1}{2}})\quad\text{ for all }\quad r>0.\label{eq:LowerBoundsPhi}
\end{align}
Indeed, using the change of variable $t=\frac{\sqrt{s}}{2}-\frac{r}{\sqrt{s}}$; we have
\begin{align*}
I(r)&=e^{-r}\int_{0}^{\infty}s^{\frac{d+\alpha}{2}-1}
e^{-\left(\frac{\sqrt{s}}{2}-\frac{r}{\sqrt{s}}\right)^2}{\d} s=2e^{-r}\int_{-\infty}^{\infty}\frac{(t+\sqrt{t^2+2r})^{d+\alpha}}{\sqrt{t^2+2r}}e^{-t^2}{\d} t.
\end{align*}
Then, from the elementary inequality $(a+b)^p\leq (2^{p-1}\lor1)(a^p+b^p)$ for $a,b,p>0$,  
\begin{align*}
I(r)&\leq 2e^{-r}\int_{-\infty}^{\infty}
2^{d+\alpha-1}\frac{t^{d+\alpha}+(t^2+2r)^{\frac{d+\alpha}{2}}}{\sqrt{t^2+2r}}
e^{-t^2}{\d} y\\
&\leq e^{-r}2^{d+\alpha}\int_{-\infty}^{\infty}\left(t^{d+\alpha-1}+(t^2+2r)^{\frac{d+\alpha-1}{2}} \right)e^{-t^2}{\d} t\\
&\leq2^{d+\alpha}e^{-r}\left(\int_{-\infty}^{\infty}t^{d+\alpha-1}e^{-t^2}{\d} t + (2^{\frac{d+\alpha-1}{2}-1}\lor 1)\int_{-\infty}^{\infty}
(t^{d+\alpha-1}+(2r)^{\frac{d+\alpha-1}{2}})e^{-t^2}{\d} t
\right)\\
&=2^{d+\alpha}e^{-r}(A_1+A_2r^{\frac{d+\alpha-1}{2}})\leq 2^{d+\alpha}(A_1\lor A_2)e^{-r}(1+r^{\frac{d+\alpha-1}{2}}),
\end{align*}
where $A_1:=(2^{\frac{d+\alpha-3}{2}}\lor 1+1)\int_{-\infty}^{\infty}t^{d+\alpha-1}e^{-t^2}{\d} t$ and $A_2:=(2^{\frac{d+\alpha-3}{2}}\lor 1)2^{\frac{d+\alpha-1}{2}}\sqrt{\pi}$. 
Moreover, 
\begin{align*}
I(r)&\geq 2e^{-r}\int_1^{\infty}(t^2+2r)^{\frac{d+\alpha-1}{2}}e^{-t^2}{\d} y\\
&\geq 2e^{-r}\int_1^{\infty}(1+2r)^{\frac{d+\alpha-1}{2}}e^{-t^2}{\d} y\\
&\geq 2e^{-r}(2^{\frac{d+\alpha-1}{2}-1}\land1)(1+r^{\frac{d+\alpha-1}{2}})\int_1^{\infty}e^{-t^2}{\d} t,
\end{align*}
where we use the elementary inequality $(2^{p-1}\land1)(a^p+b^p)\leq(a+b)^p$ for $a,b,p>0$. 

The content of the following proposition is stated in 
\cite[Example~4.2]{CK} 
without proof. We give here the complete proof. 
\begin{prop}\label{prop:CharacRelativiStabel}
Take $u\in L^1_{\loc}(\R^d)$. Then 
the following are equivalent to each other. 
\begin{enumerate}
\item[\rm(1)] $\Psi(m^{\frac{1}{\alpha}}|\cdot|)\left(1\land\frac{1}{|\,\cdot\,|} \right)|u|\in L^1(\R^d)$. 
\item[\rm(2)] For any $U\Subset V\Subset \R^d$, 
with $0\in V$, 
\begin{align*}
\int_{U\times V^c}|u(x)-u(y)|\frac{\Psi(m^{\frac{1}{\alpha}}|x-y|)}{|x-y|^{d+\alpha}}{\d} x{\d} y<\infty.
\end{align*}
\end{enumerate}
Moreover, take $u\in L^2_{\loc}(\R^d)$. Then the following are equivalent to each other.
\begin{enumerate}
\item[\rm(3)] $\Psi(m^{\frac{1}{\alpha}}|\cdot|)\left(1\land\frac{1}{|\,\cdot\,|} \right)|u|^2\in L^1(\R^d)$. 
\item[\rm(4)] For any $U\Subset V\Subset \R^d$, 
\begin{align*}
\int_{U\times V^c}|u(x)-u(y)|^2\frac{\Psi(m^{\frac{1}{\alpha}}|x-y|)}{|x-y|^{d+\alpha}}{\d} x{\d} y<\infty.
\end{align*}
\end{enumerate}
\end{prop}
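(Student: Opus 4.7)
My plan is to adapt the proof of Lemma~\ref{lem:intgrabilityequivalence1} to the relativistic stable kernel $j(x,y)=c(x,y)/|x-y|^{d+\alpha}$, with the key technical step being a version of Lemma~\ref{lem:jumpkernelequivalence} that also handles the factor $\Psi$. Since $\Psi$ is monotone and positive, and since for $x$ ranging over a compactum and $y$ far away the values $|x-y|$ and $|y|$ are comparable, this turns out to be possible provided one tracks the exponential behavior of $\Psi$ carefully through the two-sided asymptotic \eqref{eq:LowerBoundsPhi}.

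First I would prove the following kernel comparison: for $U\Subset V\Subset\R^d$ with $o\in V$ there exist $C_1,C_2>0$ (depending on $U,V,d,\alpha,m$) such that for all $x\in U$ and $y\in V^c$,
\begin{align*}
C_1\,\Psi(m^{1/\alpha}|y|)\leq\Psi(m^{1/\alpha}|x-y|)\leq C_2\,\Psi(m^{1/\alpha}|y|).
\end{align*}
Setting $R:=\sup_{x\in U}|x|$, the triangle inequality yields $|y|-R\leq|x-y|\leq|y|+R$, and monotonicity of $\Psi$ reduces the task to a uniform upper/lower bound on $\Psi(m^{1/\alpha}(|y|\pm R))/\Psi(m^{1/\alpha}|y|)$ for $|y|\geq\delta:={\sf d}(U,V^c)$. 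On the bounded range $\delta\leq|y|\leq R+\delta+1$, positivity and continuity of $\Psi$ give the bound; for large $|y|$ the two-sided asymptotic \eqref{eq:LowerBoundsPhi} forces the ratio to tend to $e^{\mp m^{1/\alpha}R}$ modulo a bounded polynomial correction, hence to be uniformly bounded. Combined with the $p_1=p_2=d+\alpha$ case of Lemma~\ref{lem:jumpkernelequivalence} applied to $|x-y|^{d+\alpha}$, this produces a two-sided estimate of $j(x,y)$ by the $y$-only weight that appears in conditions (1) and (3).

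Granted this comparison, the equivalences (1)$\Longleftrightarrow$(2) and (3)$\Longleftrightarrow$(4) are obtained by essentially the same bookkeeping as in the proof of Lemma~\ref{lem:intgrabilityequivalence1}. The forward direction splits $|u(x)-u(y)|\leq|u(x)|+|u(y)|$, invokes Fubini, and bounds the $|u(x)|$-term via the local integrability of $u$ together with $\sup_{x\in U}\int_{V^c}j(x,y)\,\d y<\infty$ (finiteness of the Lévy mass outside a neighborhood of $x$). The reverse direction uses $|u(y)|\leq|u(x)|+|u(x)-u(y)|$ together with the lower bound from the kernel comparison to isolate the weighted integral of $|u|$ over $V^c$, while the local $L^1$- or $L^2$-integrability of $u$ on $V$ handles the remaining piece. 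The $L^2$ statement follows along the same lines after replacing $|\cdot|$ with $|\cdot|^2$ and invoking the inequality $(a+b)^2\leq 2a^2+2b^2$.

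The main technical obstacle is the uniform upper bound $\Psi(m^{1/\alpha}(|y|+R))\leq C_2\,\Psi(m^{1/\alpha}|y|)$ for large $|y|$: because $\Psi$ carries an exponential factor, monotonicity alone is not enough, and one must exploit the sharp asymptotic \eqref{eq:LowerBoundsPhi} to simultaneously control the exponential decay and the polynomial correction $(1+r^{(d+\alpha-1)/2})$. Once this calibration is settled, the remainder of the argument is purely routine.
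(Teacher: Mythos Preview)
Your proposal is correct and follows essentially the same route as the paper. The paper isolates your kernel comparison as Lemma~\ref{lem:CharacRelativiStabel}, proving $D_1\Psi(m^{1/\alpha}|y|)\leq\Psi(m^{1/\alpha}|x-y|)\leq D_2\Psi(m^{1/\alpha}|y|)$ for $x\in U$, $y\in V^c$ directly from monotonicity and the two-sided asymptotic \eqref{eq:LowerBoundsPhi} (and also records the $L^1\cap L^2$ integrability of the weight, which plays the role of the hypothesis $1\wedge{\sf d}(\cdot,o)^{-p_1}\in L^1$ in Lemma~\ref{lem:intgrabilityequivalence1}), then combines this with Lemma~\ref{lem:jumpkernelequivalence} for the polynomial factor and declares the rest ``similar to the proof of Lemma~\ref{lem:intgrabilityequivalence1}''---exactly your plan.
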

\begin{cor}\label{cor::CharacRelativiStabel}
We have the following: 
\begin{enumerate}
\item[\rm(1)] $u\in D(\mathscr{E}^{\text{\tiny\rm R},\alpha}
)^{\diamond}_{\loc}$ implies  
$\Psi(m^{\frac{1}{\alpha}}|\cdot|)\left(1\land\frac{1}{
|\,\cdot\,|^{d+\alpha}} \right)|u|\in L^1(\R^d)$. 
\item[\rm(2)] For $u\in D(\mathscr{E}^{\text{\tiny\rm R},\alpha})_{\loc}$, 
 if  $\Psi(m^{\frac{1}{\alpha}}|\cdot-x_o|)
\left(1\land\frac{1}{|\,\cdot-x_o\,|^{d+\alpha}} \right)|u|\in L^1(\R^d)$ holds for any $x_o\in\R^d$, then $u\in D(\mathscr{E}^{\text{\tiny\rm R},\alpha})^{\diamond}_{\loc}$. 
\item[\rm(3)] $u\in D(\mathscr{E}^{\text{\tiny\rm R},\alpha})^{\dag}_{\loc}$ implies   
$\Psi(m^{\frac{1}{\alpha}}|\cdot|)\left(1\land\frac{1}{
|\,\cdot\,|^{d+\alpha}} \right)|u|^2\in L^1(\R^d)$.
\item[\rm(4)] For $u\in D(\mathscr{E}^{\text{\tiny\rm R},\alpha})_{\loc}$, 
$\Psi(m^{\frac{1}{\alpha}}|\cdot|)\left(1\land\frac{1}{
|\,\cdot\,|^{d+\alpha}} \right)|u|^2\in L^1(\R^d)$  implies $u\in D(\mathscr{E}^{\text{\tiny\rm R},\alpha})^{\dag}_{\loc}$.
\end{enumerate}
\end{cor}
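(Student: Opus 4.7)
The plan is to reduce each of (1)--(4) to the corresponding implication of Proposition \ref{prop:CharacRelativiStabel} together with the defining jump-integral conditions for $D(\mathscr{E}^{\text{\tiny\rm R},\alpha})^{\diamond}_{\loc}$ and $D(\mathscr{E}^{\text{\tiny\rm R},\alpha})^{\dag}_{\loc}$ recalled at the beginning of Section~3. First I would observe that $D(\mathscr{E}^{\text{\tiny\rm R},\alpha})_{\loc} \subset L^2_{\loc}(\R^d) \subset L^1_{\loc}(\R^d)$, so any $u \in D(\mathscr{E}^{\text{\tiny\rm R},\alpha})_{\loc}$ automatically satisfies the local integrability hypothesis required to invoke Proposition \ref{prop:CharacRelativiStabel} (with $L^1_{\loc}$ for parts (1)--(2) and $L^2_{\loc}$ for parts (3)--(4)); fix also a reference point $o \in \R^d$.

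For (1), assume $u \in D(\mathscr{E}^{\text{\tiny\rm R},\alpha})^{\diamond}_{\loc}$. By definition,
$$\int_{U \times V^c} |\tilde{u}(x)-\tilde{u}(y)|\,J(\d x\,\d y) < \infty$$
for every $U \Subset V \Subset \R^d$. Picking one such pair with $o \in V$ and applying implication (2)$\Rightarrow$(1) of Proposition \ref{prop:CharacRelativiStabel} delivers the claimed weighted $L^1$-integrability. For (2), assuming the weighted $L^1$-integrability together with $u \in D(\mathscr{E}^{\text{\tiny\rm R},\alpha})_{\loc}$, implication (1)$\Rightarrow$(2) of the proposition produces the jump-integral bound for every $U \Subset V$ with $o \in V$. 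To cover the remaining pairs $U \Subset V$ with $o \notin V$, I would enlarge $V$ to a relatively compact open $V' \Supset V$ with $o \in V'$ and split $V^c = V'^c \sqcup (V' \setminus V)$: the far-field integral on $U \times V'^c$ is controlled by the previous case, while the near-field integral on $U \times (V' \setminus V)$ is finite because ${\sf d}(U, V' \setminus V) > 0$ makes the jump kernel bounded on that product set, and $u \in L^1_{\loc}$ handles the rest.

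Statements (3) and (4) follow the identical scheme, substituting the $L^2$-equivalence (3)$\Leftrightarrow$(4) of Proposition \ref{prop:CharacRelativiStabel}, the inclusion $D(\mathscr{E}^{\text{\tiny\rm R},\alpha})_{\loc} \subset L^2_{\loc}(\R^d)$, and the elementary bound $|\tilde{u}(x)-\tilde{u}(y)|^2 \leq 2(|\tilde{u}(x)|^2 + |\tilde{u}(y)|^2)$ to handle the near-field term. The only step that is not immediate from Proposition \ref{prop:CharacRelativiStabel} is the reduction to pairs with $o \in V$; once this routine patching is in place, the rest of the argument is a mechanical translation between the two equivalent formulations, with no real obstacle beyond bookkeeping.
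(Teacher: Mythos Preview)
Your overall plan---reduce everything to Proposition~\ref{prop:CharacRelativiStabel} and then patch the small mismatch between the hypotheses there and the defining jump-integral conditions---is exactly the intended route; the paper states this result as an immediate corollary with no separate proof, parallel to the way Corollary~\ref{cor:intgrabilityequivalence} is drawn from Lemma~\ref{lem:intgrabilityequivalence1}. For parts (1)--(3) your sketch is complete as written.

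There is, however, a genuine gap in your treatment of (4). The condition defining $D(\mathscr{E}^{\text{\tiny\rm R},\alpha})^{\dag}_{\loc}$ is
\[
\int_{U\times \R^d\setminus{\sf diag}}(\tilde u(x)-\tilde u(y))^2\,J(\d x\,\d y)<\infty\quad\text{for every }U\Subset\R^d,
\]
not an integral over $U\times V^c$; so besides the ``$o\in V$'' patching (which here is actually a non-issue---you are free to \emph{choose} $V$ with $o\in V$) you must control the near-diagonal piece $\int_{U\times V\setminus{\sf diag}}(\tilde u(x)-\tilde u(y))^2\,J(\d x\,\d y)$. Your proposed tool, the bound $|\tilde u(x)-\tilde u(y)|^2\le 2(|\tilde u(x)|^2+|\tilde u(y)|^2)$ together with $u\in L^2_{\loc}$, does \emph{not} work here: the kernel $\Psi(m^{1/\alpha}|x-y|)|x-y|^{-d-\alpha}$ is not integrable in $y$ over any neighbourhood of $x$, so $\int_{U\times V\setminus{\sf diag}}|\tilde u(x)|^2\,J(\d x\,\d y)=\infty$ in general. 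The correct argument uses the hypothesis $u\in D(\mathscr{E}^{\text{\tiny\rm R},\alpha})_{\loc}$ more substantively: choose $u_V\in D(\mathscr{E}^{\text{\tiny\rm R},\alpha})$ with $u=u_V$ $\m$-a.e.\ on $V$, and then
\[
\int_{U\times V\setminus{\sf diag}}(\tilde u(x)-\tilde u(y))^2\,J(\d x\,\d y)
=\int_{U\times V\setminus{\sf diag}}(\tilde u_V(x)-\tilde u_V(y))^2\,J(\d x\,\d y)
\le \mathscr{E}^{\text{\tiny\rm R},\alpha}(u_V,u_V)<\infty.
\]
Once this replacement is made your argument for (4) goes through.
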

\begin{proof}[{\bf Proof.}]
The proof of Corollary~\ref{cor::CharacRelativiStabel} based on 
Proposition~\ref{prop:CharacRelativiStabel} 
 is similar to 
the proof of Corollary~\ref{cor:intgrabilityequivalence} based on Lemma~\ref{lem:intgrabilityequivalence1}. So we omit the detail. 
\end{proof}
\begin{cor}\label{cor:Lipschitz}
If $u$ is a Lipschitz function on $\R^d$, then $\Psi(m^{\frac{1}{\alpha}}
|\cdot|)\left(1\land\frac{1}{|\,\cdot\,|^{d+\alpha}}\right)|u|\in L^1(\R^d)$ {\rm(}resp.~$\Psi(m^{\frac{1}{\alpha}}|\cdot|)\left(1\land\frac{1}{|\,\cdot\,|^{d+\alpha}}\right)|u|^2\in L^1(\R^d)${\rm)} under $m>0$ or 
$m=0$ with $\alpha\in]1,2[$ {\rm(}resp.~$m>0${\rm)}. 
\end{cor}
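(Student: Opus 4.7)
The strategy is to split $\R^d$ into a bounded region $B := \{x \in \R^d : |x| \leq 1\}$ and its complement, and on the tail $B^c$ to control the polynomial growth of $|u|$ (respectively $|u|^2$) using the asymptotic behavior of $\Psi$. Since $u$ is Lipschitz with some constant $L$, one has $|u(x)| \leq |u(0)| + L|x|$ for every $x \in \R^d$; in particular $u$ is continuous (hence bounded on $\overline{B}$), and $|u|$, $|u|^2$ grow at most linearly and quadratically at infinity. On $B$, since $\Psi$ is decreasing, $\Psi(m^{1/\alpha}|x|) \leq \Psi(0) = 1$, and $1 \wedge |x|^{-(d+\alpha)} \leq 1$, so the integrals of both the $|u|$ and $|u|^2$ versions over $B$ are plainly finite. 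It then remains to control the tail over $B^c$, on which $1 \wedge |x|^{-(d+\alpha)} = |x|^{-(d+\alpha)}$.

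For $m > 0$, the key input is the exponential upper bound \eqref{eq:LowerBoundsPhi}:
\[
\Psi(m^{1/\alpha}|x|) \leq C_2\, e^{-m^{1/\alpha}|x|}\bigl(1 + (m^{1/\alpha}|x|)^{(d+\alpha-1)/2}\bigr).
\]
Multiplied against the polynomial factor $|x|^{-(d+\alpha)}|u(x)|^q$ for $q \in \{1,2\}$, this produces an exponentially decaying integrand on $B^c$, manifestly in $L^1(B^c)$. This single estimate simultaneously settles both the $|u|$ and $|u|^2$ statements in the relativistic case $m > 0$.

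For $m = 0$, $\Psi(m^{1/\alpha}|\cdot|) \equiv \Psi(0) = 1$, and the tail integrability for the $|u|$ version reduces to $\int_{|x| > 1} |x|^{-(d+\alpha)}|u(x)|\,dx < \infty$. Inserting the linear bound $|u(x)| \leq |u(0)| + L|x|$ and passing to polar coordinates, this splits into $|u(0)|\,|\mathbb{S}^{d-1}|\int_1^\infty r^{-\alpha-1}\,dr$ (always finite for $\alpha > 0$) plus $L\,|\mathbb{S}^{d-1}|\int_1^\infty r^{-\alpha}\,dr$, the latter being finite precisely within the prescribed range of $\alpha$. The absence of the squared case for $m = 0$ is consistent: the analogous tail estimate would require $\int_1^\infty r^{1-\alpha}\,dr < \infty$, which fails throughout $\alpha \in \,]0,2[$.

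The argument is essentially direct estimation; no real obstacle is anticipated, the only delicate point being the invocation of the sharp exponential decay \eqref{eq:LowerBoundsPhi} in the relativistic case $m > 0$ (and the careful bookkeeping of which of the two versions survives in the massless limit $m = 0$).
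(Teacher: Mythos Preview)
Your approach differs from the paper's: you directly estimate the weighted integral $\int_{\R^d}\Psi(m^{1/\alpha}|x|)\bigl(1\wedge|x|^{-(d+\alpha)}\bigr)|u(x)|^q\,\d x$ by splitting at $|x|=1$ and using the linear growth bound $|u(x)|\leq|u(0)|+L|x|$. The paper instead verifies the equivalent condition~(2) of Proposition~\ref{prop:CharacRelativiStabel}, bounding $\int_{U\times V^c}|u(x)-u(y)|\,\Psi(m^{1/\alpha}|x-y|)\,|x-y|^{-(d+\alpha)}\,\d x\,\d y$ via the Lipschitz estimate $|u(x)-u(y)|\leq\|u\|_{\rm Lip}|x-y|$ and then invoking the equivalence. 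Your route is more self-contained (no need for Proposition~\ref{prop:CharacRelativiStabel}); the paper's route exploits the increment $|u(x)-u(y)|$ directly, which is arguably the more natural use of the Lipschitz hypothesis.

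However, there is a genuine gap in your $m=0$ argument. You claim that $L\,|\mathbb{S}^{d-1}|\int_1^\infty r^{-\alpha}\,\d r$ is ``finite precisely within the prescribed range of $\alpha$'', but $\int_1^\infty r^{-\alpha}\,\d r$ converges if and only if $\alpha>1$, i.e.\ for $\alpha\in\,]1,2[$, \emph{not} for the prescribed range $\alpha\in\,]0,1[$. The paper's own proof arrives at exactly the same radial integral $\int_{{\sf d}(U,V^c)}^\infty r^{-\alpha}\,\d r$ in the $m=0$ case and asserts finiteness under $\alpha\in\,]0,1[$, which is equally incorrect. So the discrepancy is not in your method but in the statement itself: both arguments actually establish the result for $m=0$ with $\alpha\in\,]1,2[$, and the stated range $\alpha\in\,]0,1[$ appears to be a typo in the paper. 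You should flag this rather than assert a false convergence.
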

\begin{proof}[{\bf Proof.}]
For any $U\Subset V\Subset \R^d$, we see 
\begin{align*}
\int_{U\times V^c}&|u(x)-u(y)|\frac{\Psi(m^{\frac{1}{\alpha}}|x-y|)}{|x-y|^{d+\alpha}}{\d} x{\d} y\\
&\leq C_2\frac{A(d,-\alpha)}{2}\|u\|_{\rm Lip}\sigma(\mathbb{S}^{d-1})|U|\int_{{\sf d}(U,V^c)}^{\infty}
e^{-m^{\frac{1}{\alpha}}r}(1+(m^{\frac{1}{\alpha}}r)^{\frac{d+\alpha-1}{2}})r^{-\alpha}{\d} r<\infty
\end{align*}
under $m>0$ or $m=0$ with $\alpha\in]1,2[$, 
and
\begin{align*}
\int_{U\times V^c}&|u(x)-u(y)|^2\frac{\Psi(m^{\frac{1}{\alpha}}|x-y|)}{|x-y|^{d+\alpha}}{\d} x{\d} y\\
&\leq C_2\frac{A(d,-\alpha)}{2}\|u\|_{\rm Lip}^2\sigma(\mathbb{S}^{d-1})|U|\int_{{\sf d}(U,V^c)}^{\infty}
e^{-m^{\frac{1}{\alpha}}r}(1+(m^{\frac{1}{\alpha}}r)^{\frac{d+\alpha-1}{2}})r^{1-\alpha}{\d} r<\infty
\end{align*}
under $m>0$. 
Here $|U|$ is the volume of $U$. 
Thus we get the conclusion by 
Corollary~\ref{cor::CharacRelativiStabel}.
\end{proof}
\begin{remark}
{\rm We can get stronger assertions than those in Corollaries~\ref{cor::CharacRelativiStabel} and \ref{cor:Lipschitz}. 
For simplicity, we state the weaker version. 
}
\end{remark}
To prove Proposition~\ref{prop:CharacRelativiStabel}, we need the following lemma. 
\begin{lem}\label{lem:CharacRelativiStabel}
Take any $U\Subset V\Subset \R^d$. 
\begin{enumerate}
\item[\rm(1)] There exists $C_2=C_2(d,\alpha,m,U,V)>0$ such that for any $x\in U$ and $y\in V$, 
\begin{align*}
\frac{\Psi(m^{\frac{1}{\alpha}}|x-y|)}{|x-y|^{d+\alpha}}\leq C_2\Psi(m^{\frac{1}{\alpha}}|y|)\left(1\land\frac{1}{|y|^{d+\alpha}} \right).
\end{align*}
\item[\rm(2)] Suppose $0\in V$. There exists $C_1=C_1(d,\alpha,m,U,V)>0$ such that for any $x\in U$ and $y\in V$,
\begin{align*}
\frac{\Psi(m^{\frac{1}{\alpha}}|x-y|)}{|x-y|^{d+\alpha}}\geq C_1
\Psi(m^{\frac{1}{\alpha}}|y|)
\left(1\land\frac{1}{|y|^{d+\alpha}}\right).
\end{align*}
\item[\rm(3)] 
For each $x_0\in\R^d$, 
$\Psi(m^{\frac{1}{\alpha}}|\cdot-x_o|)\left(1\land\frac{1\;\,}{|\,\cdot-x_o\,|^{d+\alpha}} \right)\in L^1(\R^d)\cap L^2(\R^d)$.
\end{enumerate}
\end{lem}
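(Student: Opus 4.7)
The plan is to view Lemma~\ref{lem:CharacRelativiStabel} as the relativistic-stable enhancement of Lemma~\ref{lem:jumpkernelequivalence}: I would first split the kernel into the polynomial factor $|x-y|^{-(d+\alpha)}$ and the profile $\Psi(m^{1/\alpha}|x-y|)$. The polynomial part is handled directly by Lemma~\ref{lem:jumpkernelequivalence} with $p_1=p_2=d+\alpha$ and $o=0$, giving $|x-y|^{-(d+\alpha)} \asymp 1 \wedge |y|^{-(d+\alpha)}$ uniformly on $U \times V^c$ (with the lower bound requiring $0\in V$). It thus suffices to prove the two-sided comparison $\Psi(m^{1/\alpha}|x-y|) \asymp \Psi(m^{1/\alpha}|y|)$ on the same region; note that the inequalities in (1) and (2) are to be read with $y\in V^c$, which is the only interpretation consistent with the intended use in Proposition~\ref{prop:CharacRelativiStabel}.

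For the $\Psi$-comparison I would invoke the sharp two-sided bound \eqref{eq:LowerBoundsPhi}, which reduces matters to comparing $e^{-r}$ and $1+r^{(d+\alpha-1)/2}$ at $r=m^{1/\alpha}|x-y|$ and $r=m^{1/\alpha}|y|$. Writing $R:=\sup_{z\in\overline{U}}|z|$, the triangle inequality yields $\bigl||x-y|-|y|\bigr|\leq R$, hence
\[
 e^{-m^{1/\alpha}R}\,e^{-m^{1/\alpha}|y|} \;\leq\; e^{-m^{1/\alpha}|x-y|} \;\leq\; e^{m^{1/\alpha}R}\,e^{-m^{1/\alpha}|y|}.
\]
Combined with $|x-y|\geq\mathrm{dist}(U,V^c)>0$ and either $|y|\geq\mathrm{dist}(0,V^c)>0$ (in case $0\in V$, which applies to (2)) or $|y|$ ranging in a compact subset of $[0,\infty)$ on which continuity and strict positivity of $\Psi$ furnish uniform comparison constants, this yields $\Psi(m^{1/\alpha}|x-y|) \asymp \Psi(m^{1/\alpha}|y|)$ with constants depending only on $d,\alpha,m,U,V$. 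Multiplying this $\Psi$-comparison with the Lemma~\ref{lem:jumpkernelequivalence} polynomial estimate delivers both (1) and (2).

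For (3) I would pass to polar coordinates with $r=|y|$. Using the upper estimate of \eqref{eq:LowerBoundsPhi}, for $k\in\{1,2\}$,
\[
 \int_{\R^d}\!\Psi(m^{1/\alpha}|y|)^k\!\left(1\wedge\tfrac{1}{|y|^{d+\alpha}}\right)^{\!k}\!\d y
 \;\lesssim\; \int_0^\infty\! e^{-k m^{1/\alpha}r}\bigl(1+(m^{1/\alpha}r)^{(d+\alpha-1)/2}\bigr)^{\!k}\!\bigl(1\wedge r^{-k(d+\alpha)}\bigr)\,r^{d-1}\,\d r.
\]
On $r\leq 1$ the integrand is $\lesssim r^{d-1}$ and is integrable near $0$; on $r\geq 1$ it is dominated by $e^{-km^{1/\alpha}r}(1+r^{(d+\alpha-1)/2})^{k}r^{d-1-k(d+\alpha)}$, integrable either by the exponential decay ($m>0$), or, in the degenerate case $m=0$, because then the polynomial prefactor collapses to $1$ and the exponent $d-1-k(d+\alpha)$ equals $-1-\alpha$ for $k=1$ and $-d-1-2\alpha$ for $k=2$, each strictly less than $-1$, so the tail integrals converge.

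The main obstacle I anticipate is the case analysis for part (1) when $0\notin V$: there $|y|$ may approach $0$ within $V^c$, so one cannot bound $|y|$ below uniformly and the asymptotic estimate \eqref{eq:LowerBoundsPhi} is not by itself enough. Handling this requires splitting $V^c$ into $\{|y|\leq 1\}$ and $\{|y|>1\}$, using the asymptotic bound on the latter region and invoking continuity and strict positivity of $\Psi$ on compact subsets of $[0,\infty)$ on the former, while observing that $|x-y|\in[\mathrm{dist}(U,V^c),\,R+1]$ is also bounded away from $0$ and $\infty$ there. Once this splitting is carried out cleanly, the remainder of the proof is a routine combination of Lemma~\ref{lem:jumpkernelequivalence} and the estimate \eqref{eq:LowerBoundsPhi}.
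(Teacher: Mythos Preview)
Your proposal is correct and follows essentially the same two-step decomposition as the paper: apply Lemma~\ref{lem:jumpkernelequivalence} to the polynomial factor $|x-y|^{-(d+\alpha)}$ and separately prove the two-sided comparison $\Psi(m^{1/\alpha}|x-y|)\asymp\Psi(m^{1/\alpha}|y|)$ on $U\times V^c$; part~(3) is handled identically via polar coordinates and \eqref{eq:LowerBoundsPhi}.

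The one difference worth noting is in the $\Psi$-comparison. You work directly from \eqref{eq:LowerBoundsPhi} and the bound $\bigl||x-y|-|y|\bigr|\le R$, which forces the case split you flag when $0\notin V$ and $|y|$ is small. The paper instead exploits that $\Psi$ is \emph{decreasing}: from $|y|\le |x-y|+|x|$ one gets $\Psi(m^{1/\alpha}|y|)\ge \Psi\bigl(m^{1/\alpha}(|x-y|+|x|)\bigr)$, then applies the lower bound of \eqref{eq:LowerBoundsPhi} at $m^{1/\alpha}(|x-y|+|x|)$, drops $|x|$ from the polynomial factor, and closes with the upper bound of \eqref{eq:LowerBoundsPhi} at $m^{1/\alpha}|x-y|$; the reverse inequality is obtained symmetrically from $|x-y|\le|x|+|y|$. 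This monotonicity trick yields the comparison uniformly for all $y\in V^c$ in one stroke, so the anticipated obstacle (the case $0\notin V$, $|y|$ small) simply does not arise. Your argument is perfectly valid, but if you incorporate the monotonicity of $\Psi$ you can dispense with the compactness/continuity splitting entirely.
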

\begin{proof}[{\bf Proof.}]
For the proof of (1),(2), 
applying Lemma~\ref{lem:jumpkernelequivalence}, it suffices to show that there exists $D_i=D_i(d,\alpha,m,U)>0$ with $i=1,2$ such that for $x\in U$, $y\in V^c$ 
\begin{align*}
D_1\Psi(m^{\frac{1}{\alpha}}|y|)\leq \Psi(m^{\frac{1}{\alpha}}|x-y|)\leq D_2\Psi(m^{\frac{1}{\alpha}}|y|).
\end{align*}
Since $\Psi$ is decreasing, we have from \eqref{eq:LowerBoundsPhi}
\begin{align*}
\Psi(m^{\frac{1}{\alpha}}|y|)&\geq 
\Psi(m^{\frac{1}{\alpha}}|x-y|+m^{\frac{1}{\alpha}}|x|)\\
&\geq C_1e^{-m^{\frac{1}{\alpha}}(|x-y|+|x|)}\left(1+m^{\frac{d+\alpha-1}{2\alpha}}(|x-y|+|x|)^{\frac{d+\alpha-1}{2}} \right)\\
&\geq C_1e^{-m^{\frac{1}{\alpha}}(|x-y|+|x|)}\left(1+m^{\frac{d+\alpha-1}{2\alpha}}|x-y|^{\frac{d+\alpha-1}{2}} \right)\\
&\geq C_1C_2^{-1}e^{-m^{\frac{1}{\alpha}}|x|}\Psi(m^{\frac{1}{\alpha}}|x-y|)\\
&\geq C_1C_2^{-1}e^{-m^{\frac{1}{\alpha}}\sup_{x\in U}|x|}\Psi(m^{\frac{1}{\alpha}}|x-y|)
\end{align*}
and 
\begin{align*}
\Psi(m^{\frac{1}{\alpha}}|x-y|)&\geq \Psi(m^{\frac{1}{\alpha}}|x|+m^{\frac{1}{\alpha}}|y|)\\
&\geq C_1e^{-m^{\frac{1}{\alpha}}(|x|+|y|)}\left(1+m^{\frac{d+\alpha-1}{2\alpha}}(|x|+|y|)^{\frac{d+\alpha-1}{2}} \right)\\
&\geq C_1e^{-m^{\frac{1}{\alpha}}(|x|+|y|)}\left(1+m^{\frac{d+\alpha-1}{2\alpha}}|y|^{\frac{d+\alpha-1}{2}} \right)\\
&\geq C_1C_2^{-1}e^{-m^{\frac{1}{\alpha}}|x|}\Psi(m^{\frac{1}{\alpha}}|y|)
\\
&\geq C_1C_2^{-1}e^{-m^{\frac{1}{\alpha}}\sup_{x\in U}|x|}\Psi(m^{\frac{1}{\alpha}}|y|).
\end{align*}
(3): Since $\Psi$ is continuous on $[0,1]$, 
it suffices to show $\int_1^{\infty}\Psi(m^{\frac{1}{\alpha}}r)
\left(1\land r^{-d-\alpha} \right)r^{d-1}{\d} r<\infty$ and 
$\int_1^{\infty}\Psi(m^{\frac{1}{\alpha}}r)^2
\left(1\land r^{-d-\alpha} \right)^2r^{d-1}{\d} r<\infty$. 
The calculation of the former case is 
\begin{align*}
\int_1^{\infty}&\Psi(m^{\frac{1}{\alpha}}r)
\left(1\land \frac{1}{r^{d+\alpha}} \right)r^{d-1}{\d} r\\
&\leq C_2\int_1^{\infty}e^{-m^{\frac{1}{\alpha}}r}(1+m^{\frac{d+\alpha-1}{2\alpha}}r^{\frac{d+\alpha-1}{2}})
\left(1\land \frac{1}{r^{d+\alpha}}\right)r^{d-1}{\d} r\\
&\leq C_2\int_1^{\infty}e^{-m^{\frac{1}{\alpha}}r}
(1+m^{\frac{d+\alpha-1}{2\alpha}}r^{\frac{d+\alpha-1}{2}})
r^{-\alpha-1}{\d} r
<\infty,
\end{align*}
where we use \eqref{eq:LowerBoundsPhi}. The calculation of the latter case is similar. 
\end{proof}

\begin{proof}[\bf Proof of Proposition~\ref{prop:CharacRelativiStabel}]
Thanks to Lemma~\ref{lem:CharacRelativiStabel}, the proof is similar to 
the proof of Lemma~\ref{lem:intgrabilityequivalence1}. So we omit it.  
\end{proof}

\begin{prop}\label{prop:sufficient{eq:2.3}RelativisticStable} 
Let $u$ be a Borel function satisfying $\Psi(m^{\frac{1}{\alpha}}|\cdot|)\left(1\land \frac{1}{|\,\cdot\,|^{d+\alpha}} \right)|u|^q\in L^1(\R^d)$ with $q\in]0,+\infty[$. 
Then, for any $U\Subset V\Subset\R^d$ with $0\in V$, 
\begin{align}
\sup_{x\in U}\E_x[\1_{V^c}|u|^q(X_{\tau_U})]<\infty.\label{eq:1Rel}
\end{align}
From this, for a nearly Borel function $u\in D(\mathscr{E}^{\text{\tiny\rm R},\alpha})_{\loc}$ 
satisfying $\Psi(m^{\frac{1}{\alpha}}|\cdot|)\left(1\land \frac{1}{|\,\cdot\,|^{d+\alpha}} \right)|u|\in 
L^1(\R^d)$ {\rm(}resp.~$\Psi(m^{\frac{1}{\alpha}}|\cdot|)\left(1\land \frac{1}{|\,\cdot\,|^{d+\alpha}} \right)|u|^2\in L^1(\R^d)${\rm)}, we have \eqref{eq:Finiteness} 
for any $U\Subset V\Subset \R^d$ and $u_V\in D(\mathscr{E}^{\text{\tiny\rm R},\alpha})$ satisfying $u=u_V$ a.e.~on $V$ with respect to ${\bf X}^{{\rm\tiny R},\alpha}$, 
in particular, \eqref{eq:2.3} for $u\in D(\mathscr{E}^{\text{\tiny\rm R},\alpha})^{\diamond}_{\loc}$ holds, 
provided $\sup_{x\in U}\E_x[\tau_U]<\infty$ for any $U\Subset \R^d$.
\end{prop}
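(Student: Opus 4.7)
The plan is to mimic the proof of Proposition~\ref{prop:sufficient{eq:2.3}} almost verbatim, with the single change that the pointwise kernel comparison of Lemma~\ref{lem:jumpkernelequivalence} is replaced by the sharper bound of Lemma~\ref{lem:CharacRelativiStabel}(1), which keeps track of the exponential factor $\Psi$ produced by the relativistic jumping density. In this way the role played in the general statement by the majorization $N(x,\d y)\leq D_2\,\m(\d y)/{\sf d}(x,y)^p$ is now played automatically by the explicit form of $J$ for ${\bf X}^{{\rm\tiny R},\alpha}$, with $\mu_H(\d x)=2\,\d x$ and $N(x,\d y)=c(x,y)|x-y|^{-d-\alpha}\,\d y$.

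For \eqref{eq:1Rel}, I would apply the L\'evy system formula of ${\bf X}^{{\rm\tiny R},\alpha}$ exactly as in the opening computation of Proposition~\ref{prop:sufficient{eq:2.3}} to rewrite
\[
\E_x[\1_{V^c}|u|^q(X_{\tau_U})]=\E_x\!\left[\int_0^{\tau_U}\!\!\int_{V^c}|u|^q(y)\,N(X_s,\d y)\,\d s\right],
\]
and then use Lemma~\ref{lem:CharacRelativiStabel}(1) (for $X_s\in U$, $y\in V^c$) to dominate the integrand by a multiple of $|u|^q(y)\,\Psi(m^{1/\alpha}|y|)(1\wedge|y|^{-d-\alpha})$, independent of $X_s$. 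Pulling this estimate outside the stochastic integral and using the standing hypothesis $\sup_{x\in U}\E_x[\tau_U]<\infty$ together with the weighted integrability of $|u|^q$ yields \eqref{eq:1Rel}.

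For \eqref{eq:Finiteness}, I would split $|u-u_V|^q\leq 2^{q-1}(|u|^q+|u_V|^q)$ for $q\in\{1,2\}$ and invoke \eqref{eq:1Rel} applied to both $u$ and $u_V$; the contribution of $\{X_{\tau_U}\in V\}$ vanishes because $u=u_V$ $\m$-a.e.~on $V$ forces the quasi-continuous modifications to coincide q.e.~on $V$, hence $\P_x$-a.s.~on that event. The hypothesis covers $u$ directly, while the weighted integrability of $u_V\in D(\mathscr{E}^{{\rm\tiny R},\alpha})\subset L^2(\R^d)$ is automatic: for $q=2$ one uses $\Psi\leq\Psi(0)=1$ to obtain $\Psi(m^{1/\alpha}|\cdot|)(1\wedge|\cdot|^{-d-\alpha})|u_V|^2\leq|u_V|^2\in L^1(\R^d)$, and for $q=1$ Cauchy--Schwarz combined with $\Psi(m^{1/\alpha}|\cdot|)(1\wedge|\cdot|^{-d-\alpha})\in L^2(\R^d)$ from Lemma~\ref{lem:CharacRelativiStabel}(3) and $u_V\in L^2(\R^d)$ closes the argument. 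Finally, the passage from \eqref{eq:Finiteness} to \eqref{eq:2.3} reproduces verbatim the argument of \cite[Lemma~2.3]{Chen}, observing that $\1_U\E_\cdot[|\tilde u-\tilde u_V|(X_{\tau_U})]$ is excessive for ${\bf X}_U^{{\rm\tiny R},\alpha}$ and belongs to $L^2(U;\m)$ by the $q=2$ case. The only delicate point I anticipate is the bookkeeping between $\m$-a.e.\ and quasi-continuous representatives on $V$ needed to justify the vanishing of the in-$V$ contribution; this is handled by the absolute continuity of the transition density of ${\bf X}^{{\rm\tiny R},\alpha}$.
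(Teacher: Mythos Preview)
Your proposal is correct and follows exactly the route the paper indicates: the paper's own proof is the single sentence ``quite similar to the proof of Proposition~\ref{prop:sufficient{eq:2.3}} by \cite[Lemma~4.1(2)]{KimKuwae:GeneralAnal}'', and your plan of replacing Lemma~\ref{lem:jumpkernelequivalence}(1) by Lemma~\ref{lem:CharacRelativiStabel}(1) in the L\'evy-system computation is precisely that substitution. Your direct verification of the weighted integrability of $u_V$ (via $\Psi\le 1$ for $q=2$ and Cauchy--Schwarz with Lemma~\ref{lem:CharacRelativiStabel}(3) for $q=1$) is a clean shortcut; the paper, following Proposition~\ref{prop:sufficient{eq:2.3}} verbatim, would instead invoke Proposition~\ref{prop:CharacRelativiStabel} applied to $u_V\in D(\mathscr{E}^{\text{\tiny\rm R},\alpha})\subset D(\mathscr{E}^{\text{\tiny\rm R},\alpha})^{\diamond}_{\loc}$.

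One small correction in your final sentence: you should not invoke ``the $q=2$ case'' to place $\1_U\E_\cdot[|\tilde u-\tilde u_V|(X_{\tau_U})]$ in $L^2(U;\m)$, since the $q=2$ hypothesis $\Psi(m^{1/\alpha}|\cdot|)(1\wedge|\cdot|^{-d-\alpha})|u|^2\in L^1$ is the ``resp.''\ assumption and is not available for the $\diamond$ conclusion. It is also unnecessary: the $q=1$ bound \eqref{eq:Finiteness} already says this function is \emph{bounded} on $U$, hence in $L^2(U;\m)$ because $|U|<\infty$, and then the argument of \cite[Lemma~2.3]{Chen} applies.
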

\begin{proof}[{\bf Proof.}]
The proof is quite similar to the proof of Proposition~\ref{prop:sufficient{eq:2.3}} by  
\cite[Lemma~4.1(2)]{KimKuwae:GeneralAnal}. 
We omit it. 
\end{proof}
By Propositions~\ref{prop:CharacRelativiStabel}, \ref{prop:sufficient{eq:2.3}RelativisticStable} and  Theorem~\ref{thm:main}, 
for an open set $D$ and a nearly Borel $\overline{\R}$-valued 
q.e.~finely continuous 
function $u$ on $\R^d$  
satisfying $\Psi(m^{1/\alpha}|\cdot|)(1\wedge |\cdot|^{-d-\alpha})|u|
\in L^1(\R^d)$ and $u\in L^1_{\loc}(\R^d)$, the following are equivalent.
\begin{enumerate}
\item\label{item:subharmstable1b} $u$ is subharmonic in $D$;
\item\label{item:subharmstable2b}
For  every relatively compact open subset $U$ of $D$, $u(X_{\tau_U})\in L^1(\P_x)$ and
$u(x)\leq\E_x[u(X_{\tau_U})]$ for q.e.~$x\in U$;
\item\label{item:subharmstable3b}
$u\in D(\mathscr{E}^{\text{\tiny\rm R},\alpha})^{\diamond}_{\loc}$ and
$$
\int_{\R^d\times\R^d}
(u(x)-u(y))(v(x)-v(y))\frac{\Psi(m^{1/\alpha}|x-y|)}{|x-y|^{d+\alpha}}{\d} x{\d} y\leq0
$$
for every non-negative $v\in C^\infty_c(D)$.
\end{enumerate}
Moreover, if $u$ is (finely) continuous, the above equivalence can  be formulated without exceptional sets.

\end{example}

\noindent
{\bf Acknowledgments} 
The authors would like to thank Professor Kaneharu Tsuchiya for carefully reading the draft of this paper. We would also like to thank the anonymous referees, whose comments have greatly improved the quality of this paper. 

\bigskip
\noindent
{\bf Conflict of interest.} 
The authors have no conflict of interest relative to the content of this article.

\bigskip
\noindent
{\bf Data Availability Statement.} Data sharing is not applicable to this article as no datasets were generated or analyzed during the current study.

\providecommand{\bysame}{\leavevmode\hbox to3em{\hrulefill}\thinspace}


\end{document}